\setlist[enumerate]{noitemsep, nolistsep}
\setlist[itemize]{noitemsep, nolistsep}
\newtheorem{theorem}{Theorem}
\newtheorem{lemma}[theorem]{Lemma}
\newtheorem{corollary}[theorem]{Corollary}
\newtheorem{conjecture}{Conjecture}
\theoremstyle{definition}
\newtheorem{definition}{Definition}
\newcommand{\N}{\mathbb{N}}
\newcommand{\NP}{\mathcal{NP}}
\newcommand{\NPC}{\mathcal{NPC}}
\begin{document}

\begin{frontmatter}
    \title{Backbone coloring for graphs with degree $4$}
    
    \author{Krzysztof Michalik\fnref{km}}
    \address{Theoretical Computer Science Department, Jagiellonian University, Kraków, 30-348 Poland}
    \fntext[km]{E-mail: krzysztof.michalik@student.uj.edu.pl}
    
    \author{Krzysztof Turowski\fnref{kt}}
    \address{Theoretical Computer Science Department, Jagiellonian University, Kraków, 30-348 Poland}
    \fntext[kt]{E-mail: krzysztof.szymon.turowski@gmail.com. Corresponding author}

    \begin{abstract}
      The $\lambda$-backbone coloring of the graph $G$ with backbone $H$ is a graph-coloring problem in which we are given a graph $G$ and a subgraph $H$, and we want to assign colors to vertices in such a way that the endpoints of every edge from $G$ have different colors, and the endpoints of every edge from $H$ are assigned colors which differ by at least $\lambda$.

      In this paper we pursue research on backbone coloring of bounded-degree graphs with well-known classes of backbones.
      Our result is an almost complete classification of problems in the form $BBC_{\lambda}(G, H) \le \lambda + k$ for graphs with maximum degree $4$ and backbones from the following classes: paths, trees, matchings, and galaxies.
    \end{abstract}
    \begin{keyword}
        graph coloring \sep backbone coloring \sep bounded-degree graphs
    \end{keyword}
\end{frontmatter}

\section{Introduction}

$\lambda$-backbone coloring is a problem in a vast family of approaches to graph coloring. It is motivated by a particular approach to the frequency assignments problem in radio networks, often represented by graphs with transmitters modelled as vertices.
There exists a way to model restrictions that a coloring imposes on the allowed level of interference between the transmitters: vertices in the close proximity cannot have the same frequency. Additionally, the backbone requirements are understood as following: there exists also a specific subset of pairs of vertices that require a wider separation bandwidth between the signals they are emitting, e.g. because they fulfill more critical roles in the network. Of course, our aim is to minimize the total bandwidth used by the whole radio network \cite{broersma2007backbone}.

Formally, following graph notation e.g. from \cite{murty2008graph}, the backbone coloring was defined as:
\begin{definition}[Broersma et al., \cite{broersma2007backbone}]
    Let $G$ be a graph and $H$ be its spanning subgraph. Let also $\lambda \in \mathbb{N}_+$, $\lambda \ge 2$.
    The \emph{$\lambda$-backbone coloring} of $G$ with backbone $H$ is defined as a function $c\colon V(G) \to \mathbb{N}_+$ such that
    \begin{itemize}
        \item $c(u) \neq c(v)$ for every $\{u, v\} \in E(G)$,
        \item $|c(u) - c(v)| \ge \lambda$ for every $\{u, v\} \in E(H)$.
    \end{itemize}
\end{definition}

The key difference between this problem and its ancestor -- classical vertex coloring -- is obviously an introduction of the backbone. This entails certain properties absent in the original problem: the latter is oblivious to the color ordering, and we can permute them in any order whenever necessary. However, in the backbone coloring problem the ordering of the colors matters, and it can be the case that even in the optimal coloring $c$ some colors between $1$ and $\max_{v \in V(G)} c(v)$ may not be used at all.
Thus, the \emph{$\lambda$-backbone chromatic number} for a graph $G$ with backbone $H$ (denoted as $BBC_\lambda(G, H)$) is defined accordingly as the smallest $k$ for which there exists a $\lambda$-backbone $k$-coloring of $G$ with backbone $H$.

\subsection{Previous results}

The first results proved by Broersma et al. for the $\lambda$-backbone chromatic number were general bounds in terms of $\chi(G)$:
\begin{theorem}[\cite{broersma2007backbone}]
\label{thm:broersma}
Let $G$ be a graph and $H$ its spanning subgraph.
Then $\chi(G) \le BBC_{\lambda}(G, H) \le \lambda(\chi(G) - 1) + 1$.
\end{theorem}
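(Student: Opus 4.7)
The plan is to handle the two inequalities separately; both are essentially immediate.

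For the lower bound $\chi(G) \le BBC_{\lambda}(G,H)$, I would simply observe that the first condition in the definition of a $\lambda$-backbone coloring, $c(u) \neq c(v)$ for every $\{u,v\} \in E(G)$, says exactly that $c$ is a proper vertex coloring of $G$. Hence any $\lambda$-backbone coloring using $k$ distinct values yields a proper $k$-coloring of $G$, so $k \ge \chi(G)$. Taking $k = BBC_{\lambda}(G,H)$ gives the inequality.

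For the upper bound $BBC_{\lambda}(G,H) \le \lambda(\chi(G)-1)+1$, my strategy is to take an optimal proper coloring of $G$ and stretch its palette by a factor of $\lambda$. Concretely, let $c'\colon V(G) \to \{1, 2, \ldots, \chi(G)\}$ be a proper $\chi(G)$-coloring of $G$, and define $c(v) = \lambda(c'(v) - 1) + 1$. Then $c$ takes values in $\{1, \lambda+1, 2\lambda+1, \ldots, \lambda(\chi(G)-1)+1\}$, so in particular $c(v) \le \lambda(\chi(G)-1)+1$ for all $v$. For any edge $\{u,v\} \in E(G)$, we have $c'(u) \neq c'(v)$, which implies both $c(u) \neq c(v)$ and $|c(u) - c(v)| = \lambda|c'(u) - c'(v)| \ge \lambda$; the latter in particular covers all edges of the spanning subgraph $H \subseteq G$. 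Hence $c$ is a valid $\lambda$-backbone coloring, which witnesses the claimed upper bound.

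There is no real obstacle here: the proof is a one-line observation in each direction, and the role of the statement in the paper is to serve as the baseline that subsequent results in the literature (and in this paper) aim to sharpen for restricted classes of $G$ and $H$.
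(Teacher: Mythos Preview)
Your proof is correct. Note that the paper itself does not prove this theorem at all: it is quoted from \cite{broersma2007backbone} as background, so there is no in-paper argument to compare against. That said, your stretching construction $c(v) = \lambda(c'(v)-1)+1$ is exactly the standard proof given in the original reference, so nothing further is needed here.
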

In \cite{havet2014circular} and \cite{janczewski2015backbone} there were proposed other general bounds, suited particularly for graphs with $\chi(H) \ll \chi(G)$:
\begin{theorem}[\cite{havet2014circular}]
\label{thm:havet}
Let $G$ be a graph and $H$ its spanning subgraph.
Then $BBC_{\lambda}(G, H) \le (\lambda + \chi(G) - 2) \chi(H) - \lambda + 2$.
\end{theorem}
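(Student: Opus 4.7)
The plan is to combine a proper coloring of $G$ with a proper coloring of $H$ into a single $\lambda$-backbone coloring using a careful block layout. Fix proper colorings $c_G\colon V(G)\to\{1,\dots,\chi(G)\}$ and $c_H\colon V(G)\to\{1,\dots,\chi(H)\}$; the strategy is to assign each $H$-color class $V_i = c_H^{-1}(i)$ to a block of $\chi(G)$ consecutive integers, spaced so that the total palette has exactly $(\lambda+\chi(G)-2)\chi(H)-\lambda+2$ colors. The naive arrangement, with a pure gap of $\lambda$ between successive blocks, overshoots by $\chi(H)-1$, so the construction must squeeze the blocks one unit closer and rely on the $G$-coloring to prevent a clash at the boundaries.

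The trick is a \emph{boustrophedon} reversal of the within-block orientation. I would define
\[
c(v) = (c_H(v)-1)(\chi(G)+\lambda-2) + \pi_{c_H(v)}(c_G(v)),
\]
where $\pi_i$ is the identity on $\{1,\dots,\chi(G)\}$ when $i$ is odd and the reversal $\pi_i(j)=\chi(G)+1-j$ when $i$ is even. The blocks have width $\chi(G)$ and their starting points are spaced $\chi(G)+\lambda-2$ apart, so two consecutive blocks are separated by a raw gap of only $\lambda-1$. The reversal ensures that the top of an odd block and the bottom of the next even block are both occupied by vertices with $c_G$-value $\chi(G)$, and symmetrically between even and odd blocks both occupied by vertices with $c_G$-value $1$. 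Since any $H$-edge is also a $G$-edge, its endpoints have distinct $c_G$-values, so the would-be tight configuration never occurs.

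Once the formula is in place, the verification is a short case analysis. Propriety of $c$ on $E(G)$ follows by splitting on whether the endpoints lie in the same $H$-class (where $c_G$-propriety suffices) or different ones (where the block offset dominates). For the backbone constraint on $\{u,v\}\in E(H)$ with $c_H(u)<c_H(v)$, the case $c_H(v)-c_H(u)\ge 2$ is easy, since two block offsets easily absorb any within-block variation. The crucial case $c_H(v)-c_H(u)=1$ reduces, after expansion, to the inequality $\pi_{c_H(u)}(c_G(u))-\pi_{c_H(v)}(c_G(v))\le \chi(G)-2$; the extremal value $\chi(G)-1$ would force $c_G(u)=c_G(v)$, which is forbidden because $\{u,v\}\in E(G)$. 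A direct arithmetic check then confirms that the largest color used equals $(\chi(H)-1)(\chi(G)+\lambda-2)+\chi(G)=(\lambda+\chi(G)-2)\chi(H)-\lambda+2$, as required.

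The main obstacle is spotting the snake trick: the natural block construction is off by exactly one color per inter-block gap, and the fix is not to widen the gaps but to orient the within-block $G$-coloring so that the two colors responsible for the potential clash at consecutive boundaries correspond to the \emph{same} $c_G$-value, which the inclusion $E(H)\subseteq E(G)$ then rules out.
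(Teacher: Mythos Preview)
The paper does not prove this theorem; it is merely quoted from \cite{havet2014circular} as background. There is therefore no in-paper proof to compare against. That said, your argument is correct and self-contained.

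Your boustrophedon construction works exactly as you claim. The only place worth tightening is the verification of the adjacent-block case $c_H(v)-c_H(u)=1$: you assert that the extremal value $\chi(G)-1$ of $\pi_{c_H(u)}(c_G(u))-\pi_{c_H(v)}(c_G(v))$ forces $c_G(u)=c_G(v)$, and indeed this is what happens, but it is worth writing out both parities explicitly. When $c_H(u)$ is odd and $c_H(v)$ even, the difference equals $c_G(u)+c_G(v)-\chi(G)-1$, which reaches $\chi(G)-1$ only if $c_G(u)=c_G(v)=\chi(G)$; when $c_H(u)$ is even and $c_H(v)$ odd, it equals $\chi(G)+1-c_G(u)-c_G(v)$, which reaches $\chi(G)-1$ only if $c_G(u)=c_G(v)=1$. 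Either way $c_G(u)=c_G(v)$ is forced, contradicting $\{u,v\}\in E(G)$. With that spelled out, every step checks, including the non-overlap of blocks (the inter-block gap is $\lambda-1\ge 1$ since $\lambda\ge 2$) and the final color count.
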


\begin{theorem}[\cite{janczewski2015backbone}]
Let $G$ be a graph on $n$ vertices and $H$ its spanning subgraph. Then $\lambda(\chi(H) - 1) + 1 \le BBC_{\lambda}(G, H) \le \lambda(\chi(H) - 1) + n - \chi(H) + 1$.
\end{theorem}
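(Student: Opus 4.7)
The two inequalities $\lambda(\chi(H)-1)+1 \le BBC_\lambda(G,H)$ and $BBC_\lambda(G,H) \le \lambda(\chi(H)-1)+n-\chi(H)+1$ are independent, so I would prove them separately.

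For the lower bound, the plan is to start from any valid $\lambda$-backbone $k$-coloring $c$ of $G$ with backbone $H$ and collapse the palette into blocks of $\lambda$ consecutive integers by setting $c'(v) = \lceil c(v)/\lambda \rceil$. For any edge $\{u,v\} \in E(H)$, the gap $|c(u)-c(v)| \ge \lambda$ forces $c(u)$ and $c(v)$ to lie in different blocks, so $c'$ is a proper coloring of $H$ using at most $\lceil k/\lambda \rceil$ colors. This yields $\chi(H) \le \lceil k/\lambda \rceil$, which rearranges directly to $k \ge \lambda(\chi(H)-1)+1$.

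For the upper bound, I would give an explicit construction. Starting from an optimal proper coloring of $H$ with color classes $V_1,\dots,V_{\chi(H)}$ of sizes $n_1,\dots,n_{\chi(H)}$, I would place $\chi(H)$ disjoint intervals $I_1,\dots,I_{\chi(H)}$ of lengths $n_1,\dots,n_{\chi(H)}$ on $\mathbb{N}_+$, separated by exactly $\lambda-1$ unused integers each (so that $\min I_{i+1} - \max I_i = \lambda$), and then assign the vertices of $V_i$ the $n_i$ distinct integers in $I_i$ in any order. Because every vertex receives its own unique color, this is automatically a proper coloring of $G$; and because every $H$-edge has its two endpoints in distinct classes $V_i,V_j$ with $i\neq j$, the interval spacing guarantees $|c(u)-c(v)| \ge \lambda$. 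A short summation shows that the largest color used is
\[
\sum_{i=1}^{\chi(H)} n_i + (\chi(H)-1)(\lambda-1) \;=\; n + (\chi(H)-1)(\lambda-1) \;=\; \lambda(\chi(H)-1)+n-\chi(H)+1.
\]

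Neither direction looks delicate. The lower bound amounts to the standard ``divide the palette into $\lambda$-blocks'' reduction, and the upper bound is a one-shot construction whose only non-trivial ingredient is the bookkeeping that recovers the stated formula. The one subtle point worth flagging in the write-up is \emph{why} it is safe to spend the colors inside $I_i$ so wastefully: because each vertex receives a unique color, the $G$-edge constraint becomes vacuous, so no $G$-adjacencies inside $V_i$ need to be inspected. This is also the reason the bound depends on $n$ rather than on $\chi(G)$, making it complementary to Theorems~\ref{thm:broersma} and~\ref{thm:havet}.
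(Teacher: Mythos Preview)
Your argument is correct on both halves. The block map $c'(v)=\lceil c(v)/\lambda\rceil$ indeed yields a proper coloring of $H$: if $\lceil c(u)/\lambda\rceil=\lceil c(v)/\lambda\rceil=m$ then $c(u),c(v)\in\{(m-1)\lambda+1,\dots,m\lambda\}$, forcing $|c(u)-c(v)|\le\lambda-1$. From $\chi(H)\le\lceil k/\lambda\rceil$ you get $k>\lambda(\chi(H)-1)$ and hence the lower bound. The upper-bound construction is also sound; the arithmetic $n+(\chi(H)-1)(\lambda-1)=\lambda(\chi(H)-1)+n-\chi(H)+1$ checks out, and giving every vertex a distinct color makes the $G$-constraint vacuous exactly as you note.

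There is, however, nothing to compare against here: this theorem appears in the paper only as a citation from \cite{janczewski2015backbone} in the survey of previous results, with no proof reproduced. So your write-up would stand on its own rather than mirror or diverge from anything in the present manuscript.
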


However, it turns out that the problem of computing the exact value of $BBC_{\lambda}(G, H)$ quickly becomes $\NP$-hard:
\begin{theorem}[\cite{contributions}, see also \cite{broersma2009lambda}]
    Let $\lambda \ge 2$.
    \begin{itemize}
        \item[$(a)$] The following problem is polynomially solvable for any $l \le \lambda + 1$: given a graph $G$ and a galaxy backbone $S$, decide whether $BBC_{\lambda}(G, S) \le l$.
        \item[$(b)$] The following problem is $\NP$-complete for all $l \ge \lambda + 2$: given a graph $G$ and a matching backbone $M$, decide whether $BBC_{\lambda}(G, M) \le l$.
        \item[$(c)$] The following problem is polynomially solvable for any $l \le \lambda + 2$: given a graph $G$ and a spanning tree $T$, decide whether $BBC_{\lambda}(G, T) \le l$.
        \item[$(d)$] The following problem is $\NP$-complete for all $l \ge \lambda + 3$: given a graph $G$ and a Hamiltonian path $P$, decide whether $BBC_{\lambda}(G, P) \le l$.
    \end{itemize}
\end{theorem}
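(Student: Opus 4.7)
The plan is to treat the four parts as two pairs: the tractable cases (a, c), and the hardness cases (b, d). The tractability proofs exploit the rigidity that small values of $l - \lambda$ impose on admissible color patterns across backbone edges; the hardness proofs will be reductions from known NP-complete coloring problems that exploit the extra freedom appearing once $l - \lambda$ crosses the threshold.

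For part (a), if $l \le \lambda$ then no backbone edge can be assigned colors differing by at least $\lambda$, so the instance is accepted only if $S$ is edgeless, reducing to a constant-sized coloring check. For $l = \lambda + 1$, the only admissible color pair across an edge of $S$ is $\{1, \lambda+1\}$. Since a galaxy is a disjoint union of stars, each star has exactly two admissible internal assignments (center $= 1$ and leaves $= \lambda+1$, or the reverse), which I would encode per star as a Boolean variable, convert adjacencies in $G$ between backbone vertices into 2-SAT clauses, and then complete the remaining vertices using the middle colors $\{2, \ldots, \lambda\}$ to avoid conflicts with the fixed backbone assignment. For part (c), the same rigidity argument refines further: with a spanning tree $T$ and $l \le \lambda + 2$, every edge of $T$ has at most three admissible color pairs $\{1, \lambda+1\}, \{1, \lambda+2\}, \{2, \lambda+2\}$, so each vertex has a constant-size candidate list; I would root $T$ and run bottom-up dynamic programming, for each vertex and each candidate color deciding whether the subtree below admits a valid extension that is also consistent with the $G$-neighbors seen so far.

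For parts (b) and (d), both follow from reductions from classical NP-complete coloring problems. For (b), a matching backbone at $l \ge \lambda + 2$ gives each matching edge at least three color pair options, and I would design gadgets in which matching edges act as switches between color classes, reducing from a variant of $k$-colorability where $k$ is chosen as a function of $l - \lambda$. For (d), the Hamiltonian path constraint is much more rigid, since every vertex lies on the backbone, so the reduction must propagate information along the path while still embedding an NP-hard choice; the extra slack at $l \ge \lambda + 3$ is precisely what allows each path edge to encode enough binary information between neighbors in the gadget. The main obstacle, and the technical heart of the proof, is the gadget construction in (b) and (d): matching disjointness and the linear Hamiltonian structure both drastically restrict what auxiliary structures may be added to $G$, and the reductions must be tight at the boundary values $l = \lambda + 2$ and $l = \lambda + 3$ to delineate the tractable-hard transition promised by the theorem.
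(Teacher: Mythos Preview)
This theorem is stated in the paper as a cited result from \cite{contributions} and \cite{broersma2009lambda}; the paper itself gives no proof, only the remark that the proofs of (a) and (c) in those references are constructive and yield linear-time algorithms. So there is no ``paper's own proof'' to compare against, and I can only assess your sketch on its own merits.

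Your approach to (a) via 2-SAT is essentially the right idea, provided the galaxy is spanning in the sense that every vertex is incident to a backbone edge (otherwise, with an edgeless $S$ and $\lambda=2$, the problem degenerates to 3-colorability of $G$, which is $\NP$-hard). Your handling of (c), however, has a genuine gap. You propose bottom-up dynamic programming on the rooted tree $T$, but the non-tree edges of $G$ are not confined to subtrees of $T$: an edge of $G \setminus T$ can join a vertex deep in one subtree to a vertex deep in another, and no local DP state at a vertex can record enough to enforce that constraint. The cleaner argument is that with $l=\lambda+2$ every vertex must receive a color in $\{1,2,\lambda+1,\lambda+2\}$, the bipartition of $T$ forces one side to use $\{1,2\}$ and the other $\{\lambda+1,\lambda+2\}$, and then the residual constraints (tree edges forcing $2 \leftrightarrow \lambda+2$, non-tree edges inside a class forcing distinctness) are all binary implications, i.e.\ again a 2-SAT instance---which also explains the linear running time the paper alludes to.

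For (b) and (d) you have only described what a reduction should accomplish, not exhibited one. Saying that matching edges ``act as switches between color classes'' or that the slack at $l \ge \lambda+3$ ``allows each path edge to encode enough binary information'' is not yet an argument: the entire content of such hardness proofs lies in the concrete gadgets, and without them there is nothing to verify.
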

That is, when the backbone is connected, there is a case $BBC_{\lambda}(G, T) \le \lambda + 2$ which can be seemingly solved easier. In particular, an investigation of the proofs of cases $(a)$ and $(c)$ shows that we construct a suitable colorings if the answer is \texttt{YES} in $\mathcal{O}(n + m)$ time.

Let us proceed now with results for graphs with a fixed maximum degree $\Delta(G)$.
First, we note in passing that the proof of the above theorem in \cite{contributions} may be used to establish the hardness of problems $BBC_\lambda(G, M) \le \lambda + 2$ and $BBC_\lambda(G, M) \le \lambda + 3$ for graphs with $\Delta(G) \le 6$ -- but unfortunately this construction relies on the hardness of $l$-coloring which in turn requires $\Delta(G) \ge l + 1$, so it does not establish the hardness for all possible thresholds (at least up to the upper bounds, when problems become trivial).

Mi{\v{s}}kuf et al. in \cite{mivskuf2009backbone} proved important upper bounds when $\lambda = 2$ and backbone is a $d$-degenerated graph, that is a graph whose every induced subgraph contains a vertex of a degree at most $d$:
\begin{theorem}[\cite{mivskuf2009backbone}] 
    \label{miskuf:d}
    Let $G$ be a graph and let $H$ be a $d$-degenerated subgraph of $G$. Then, $BBC_2(G, H) \le \Delta(G) + d + 1$.
\end{theorem}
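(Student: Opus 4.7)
The plan is to proceed by induction on $|V(G)|$, using the $d$-degeneracy of $H$ to identify a good vertex to remove at each step. Since $H$ is $d$-degenerate, there is always a vertex $v$ with $d_H(v) \le d$. I remove $v$, apply the induction hypothesis to $G - v$ with backbone $H - v$ (which remains $d$-degenerate and has $\Delta(G - v) \le \Delta(G)$) to obtain a $2$-backbone coloring $c'$ drawn from $\{1, 2, \ldots, \Delta(G) + d + 1\}$, and then extend $c'$ to $v$.

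When extending, each $G$-neighbor $u$ of $v$ forbids the color $c'(u)$, and each $H$-neighbor $u$ additionally forbids the two colors $c'(u) - 1$ and $c'(u) + 1$. A naive count gives at most $d_G(v) + 2 d_H(v) \le \Delta(G) + 2d$ forbidden colors, so pure greedy extension would require $\Delta(G) + 2d + 1$ colors. This is the main obstacle: the naive count exceeds our budget $\Delta(G) + d + 1$ by up to $d$.

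To close this gap I would first look for overlaps: for each $H$-neighbor $u$ of $v$, the extras $c'(u) \pm 1$ often coincide with $c'(w)$ for another $G$-neighbor $w$ of $v$, and under the $\lambda = 2$ constraint along $H$-edges incident to neighbors of $v$ one can try to force at least $d$ such coincidences. If that direct counting fails, my fallback is a Kempe-chain recoloring argument: whenever all $\Delta(G) + d + 1$ colors are blocked at $v$, I would locate a color $c$ that is blocked only because $c = c'(u) \pm 1$ for some $H$-neighbor $u$, and then swap colors along a $\{c, c'(u)\}$-chain starting at $u$ so as to vacate $c$ for $v$.

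The principal technical difficulty I anticipate lies in the Kempe step, because the $\lambda = 2$ constraint interacts awkwardly with two-color swaps: exchanging colors $a$ and $b$ along a chain may create a new violation on an incident $H$-edge whose other endpoint has a color within distance one of $a$ or $b$. Taming this interaction—either by lengthening the chain, restricting attention to pairs $\{a, b\}$ with $|a - b| \ge 2$, or choosing the removed vertex $v$ so that a purely local swap suffices—is where the substance of the argument will lie.
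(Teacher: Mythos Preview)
This theorem is cited from \cite{mivskuf2009backbone} and the present paper does not give its own proof; the only information offered is the remark that ``for $1$-degenerate backbones they use just an appropriate greedy algorithm running in $O(m)$ time.'' So there is nothing in the paper to compare your argument against beyond that one-line description of the original source.

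On its own merits, your proposal is an outline rather than a proof: you correctly isolate the obstacle (naive extension forbids up to $\Delta(G)+2d$ colours, overshooting the budget by $d$), but neither of your two remedies is carried through. The first one---hoping for at least $d$ coincidences between the extras $c'(u)\pm 1$ and colours of other $G$-neighbours---already fails for $d=1$, $\Delta(G)=3$: let $v$ have a single $H$-neighbour $u$ and two further $G$-neighbours $w_1,w_2$, the three pairwise non-adjacent, and suppose the inductive colouring of $G-v$ assigns $c'(u)=3$, $c'(w_1)=1$, $c'(w_2)=5$; then every colour in $\{1,\ldots,5\}$ is blocked at $v$ and no overlap occurs. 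The second remedy, a Kempe-type swap, you yourself leave open, noting that exchanging two colours along a chain can create fresh $\lambda=2$ violations on incident backbone edges. To turn the outline into a proof you must either specify a colouring rule in the inductive step that \emph{prevents} such bad configurations from arising (this is presumably what ``appropriate greedy'' is pointing at), or actually carry out and verify a recolouring procedure; as written, the argument stops precisely where the content begins.
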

This immediately implies $BBC_2(G, H) \le \Delta(G) + 2$ for galaxy, path, and tree backbones, as all such graphs are $1$-degenerate.

Moreover, the authors claimed to improve the result for matching backbones. Although the original proof turned out to be flawed, it was later corrected in \cite{araujo2022backbone}, thus establishing that:
\begin{theorem}[\cite{mivskuf2009backbone,araujo2022backbone}] 
    \label{miskuf:matching}
    Let $M$ be a matching in a graph $G$. Then it holds that $BBC_2(G, M) \le \Delta(G) + 1$.
\end{theorem}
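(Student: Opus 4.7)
The plan is to argue by contradiction, taking $(G, M)$ to be a vertex-minimal counterexample with $\Delta := \Delta(G)$ and $BBC_2(G, M) \geq \Delta + 2$. First I would dispose of the small cases $\Delta \leq 2$ by direct inspection (connected components are paths, cycles, or triangles), so henceforth assume $\Delta \geq 3$.

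The first reduction would show $V(M) = V(G)$: if some $v \notin V(M)$, then by minimality $(G - v, M)$ admits a valid $(\Delta + 1)$-coloring, and since $v$ has at most $\Delta$ neighbors, a free color among $\{1, \ldots, \Delta + 1\}$ remains available for $v$.

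Next I would fix any matching edge $e = \{u, v\} \in M$ and apply minimality to $(G - \{u, v\}, M - e)$ to obtain a valid coloring $c$ with colors from $\{1, \ldots, \Delta + 1\}$. Defining
\[ A_u = \{1, \ldots, \Delta+1\} \setminus c(N(u) \setminus \{v\}), \quad A_v = \{1, \ldots, \Delta+1\} \setminus c(N(v) \setminus \{u\}), \]
both sets have size at least $2$. The non-extendability of $c$ means that no pair $(i, j) \in A_u \times A_v$ satisfies $|i - j| \geq 2$, and a short case check (analysing $|A_u| \geq 3$ and then $|A_u| = |A_v| = 2$) forces $A_u = A_v = \{a, a+1\}$ for some $a \in \{1, \ldots, \Delta\}$. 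Consequently $\deg(u) = \deg(v) = \Delta$ and each of $N(u) \setminus \{v\}, N(v) \setminus \{u\}$ is rainbow-coloured with exactly $\{1, \ldots, \Delta+1\} \setminus \{a, a+1\}$.

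The final and most delicate step is to modify $c$ locally so as to open an extension. The natural tool is a Kempe chain swap on colours $\{a, b\}$ for some $b \notin \{a, a+1\}$: swapping the component containing $u$'s $b$-coloured neighbour changes $A_u$ to $\{b, a+1\}$, and a further case check shows that this new pair of lists always admits an extending pair $(i, j)$ with $|i - j| \geq 2$. The hard part will be that such a swap can violate the backbone constraint on some matching edge $\{x, y\} \in M - e$ whose endpoints have one colour in $\{a, b\}$ and one outside: if $c(x) = a$ and $c(y) = d \notin \{a, b\}$ with $|a - d| \geq 2$ but $|b - d| = 1$, then post-swap $\{x, y\}$ becomes a backbone violation. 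Resolving this obstacle would require either a careful choice of $b$ and of the Kempe component so that every affected matching edge remains legal, or a more global recolouring argument---for instance a Hall-type matching on the bipartite graph of admissible colour pairs, or a propagation argument that exploits the rigidity of the neighbourhoods (all of degree $\Delta$, all rainbow-coloured) to force the existence of a safe swap. Boundary colours ($a \in \{1, \Delta\}$) and shared neighbourhoods $N(u) \cap N(v)$ would require separate sub-cases. The history of the result reported in \cite{araujo2022backbone}, where a naive Kempe argument was originally flawed and later corrected, confirms that reconciling such swaps with the backbone constraint is the true heart of the proof.
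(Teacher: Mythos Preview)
Your minimal-counterexample setup through the conclusion $A_u = A_v = \{a, a+1\}$ is clean and correct. The gap is exactly where you say it is: you have not given any mechanism for performing a Kempe swap on $\{a,b\}$ while preserving the backbone constraint on all edges of $M - e$. A swap can turn a legal matching edge $\{x,y\}$ with $c(x)=a$, $c(y)=a\pm 2$ into an illegal one if $b = a \pm 2 \mp 1$, and nothing in your outline controls this. You gesture at ``a careful choice of $b$'' or ``a Hall-type matching'', but neither is developed, and in fact the original argument of Mi\v{s}kuf et al.\ failed at precisely this point. So as written this is a plan with its central difficulty unresolved, not a proof.

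More importantly, the corrected proof described in the paper (following \cite{araujo2022backbone}) does \emph{not} proceed by Kempe recolouring at all. It is a structural ordering argument: one identifies a \emph{fork}, namely a non-adjacent pair $x,y$ sharing a common neighbour $v$ with $G \setminus \{x,y\}$ connected (these are separation pairs, found in linear time via SPQR-trees), and uses it together with a breadth-first search from $v$ to produce a linear ordering of $V(G)$ in which every vertex, when reached, has enough uncoloured neighbours (or a specifically placed matching partner) that a greedy assignment from $\{1,\ldots,\Delta+1\}$ always succeeds. The backbone constraint is handled proactively by the ordering rather than repaired after the fact, which is why the Kempe-chain obstacle never arises. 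Your approach and the paper's are therefore genuinely different in kind; if you want to salvage the Kempe route you would need a new idea for protecting matching edges during swaps, and the paper gives no hint that such an idea is known.
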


Although the authors of the original theorems in \cite{mivskuf2009backbone,araujo2022backbone} did not provide running time analyses, a quick glance at their proof shows that for $1$-degenerate backbones they use just an appropriate greedy algorithm running in $O(m)$ time.
For graphs with matching backbones, however, it was additionally required to check for existence of forks i.e. pairs of vertices $x, y \in V(G)$ in a graph such that $\{x, y\} \notin E(G)$, but there exists a vertex $v \in V(G)$ such that $\{x, v\}, \{y, v\} \in E(G)$ and $G \setminus \{x, y\}$ is connected.
Interestingly, using a structure called SPQR-tree we can find all such ``separation pairs'' in $O(m)$ time \cite{gutwenger2000linear}.
Then, the proof of Theorem 5.4 in \cite{araujo2022backbone} outlines how we can combine this subprocedure and the breadth-first search algorithm to find in linear time an appropriate sequence of vertices that can be colored in a greedy fashion using only colors from the set $\{1, \ldots, \Delta(G) + 1\}$.

The investigations for bounded $\Delta(G)$ were pursued further in \cite{janczewski2015computational}, where the authors proved that for $\Delta(G) \le 3$ one can always find an optimal $\lambda$-backbone coloring of $G$ with any connected backbone $H$.
In particular, they proved that in the hardest case, when $G$ is $3$-colorable and $H$ is bipartite and connected, then it always holds that $BBC_\lambda(G,H)\leq\lambda+3$.

Their contribution for establishing hardness was twofold. First, for $\Delta(G) \ge 4$ they showed $\NP$-completeness in the general setting:
\begin{theorem}[\cite{janczewski2015computational}]
    For any graph $G$ with a fixed $\Delta(G) \ge 4$ and any $\lambda\geq 2$ the problem of verifying if $BBC_\lambda(G, G)\leq 2\lambda+1$ is $\NP$-complete.
\end{theorem}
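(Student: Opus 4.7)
The plan is a polynomial-time many-one reduction from the classical $3$-colorability problem on graphs of maximum degree $4$, a well-known $\NP$-complete problem, to the decision problem $BBC_\lambda(G, G) \le 2\lambda + 1$. Membership in $\NP$ is immediate, since a candidate coloring can be verified in polynomial time by checking every edge. The reduction I have in mind is simply the identity map on the input graph, which preserves the degree bound automatically; all the real work lies in proving the equivalence
\[
BBC_\lambda(G, G) \le 2\lambda + 1 \iff \chi(G) \le 3,
\]
which will hold for every graph $G$ and every fixed $\lambda \ge 2$.

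The easy direction ``$\chi(G) \le 3 \Rightarrow BBC_\lambda(G, G) \le 2\lambda + 1$'' will be handled by mapping the three classes of a proper $3$-coloring of $G$ to the palette values $1$, $\lambda + 1$ and $2\lambda + 1$, respectively. Any two of these three values differ by at least $\lambda$, so every backbone edge is satisfied. For the converse, I intend to extract a $3$-coloring from an arbitrary valid $\lambda$-backbone coloring $c\colon V(G) \to \{1, \ldots, 2\lambda+1\}$ by partitioning the palette into three ``backbone-independent'' blocks $I_1 = \{1, \ldots, \lambda\}$, $I_2 = \{\lambda+1, \ldots, 2\lambda\}$ and $I_3 = \{2\lambda+1\}$, and setting $\phi(v) = j$ whenever $c(v) \in I_j$. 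Any two colors inside a common block differ by at most $\lambda - 1$ (or by $0$ for the singleton $I_3$), and thus would violate the backbone constraint on any edge of $G = H$; hence every edge $uv \in E(G)$ must satisfy $\phi(u) \neq \phi(v)$, making $\phi$ a proper $3$-coloring of $G$.

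The only place where any insight is needed is the choice of this block partition: the widths $\lambda, \lambda, 1$ are forced by requiring each block to be backbone-independent, and the unavoidable singleton $I_3$ is exactly what accounts for the ``$+1$'' in the threshold $2\lambda + 1$, suggesting the bound is tight. I do not foresee any further technical obstacle: the hardness input is the $\NP$-completeness of $3$-coloring on max-degree-$4$ graphs, and for any larger fixed degree bound the result follows by attaching to each instance a disjoint, trivially $3$-colorable gadget (such as a star $K_{1,\Delta}$) that raises the maximum degree to the desired value without affecting either side of the equivalence.
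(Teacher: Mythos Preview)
Your argument is correct. The equivalence $BBC_\lambda(G,G)\le 2\lambda+1 \iff \chi(G)\le 3$ holds for every graph and every $\lambda\ge 2$ via exactly the palette map to $\{1,\lambda+1,2\lambda+1\}$ in one direction and the block partition $\{1,\ldots,\lambda\}\cup\{\lambda+1,\ldots,2\lambda\}\cup\{2\lambda+1\}$ in the other; the reduction from $3$-colourability of maximum-degree-$4$ graphs, padded with a disjoint $K_{1,\Delta}$ when a larger fixed degree bound is required, is sound and clearly polynomial.

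There is, however, nothing in the present paper to compare your proof against: this theorem appears in the ``Previous results'' subsection, is attributed to \cite{janczewski2015computational}, and is not re-proved here. Your reduction is the natural one (indeed, it is just the special case $k=3$ of the tight instance in \Cref{thm:broersma}) and is presumably what the original reference does as well.
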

Furthermore, they also presented a result for $\Delta(G) \ge 5$ when backbone is just a spanning tree of $G$: 
\begin{theorem}[\cite{janczewski2015computational}]
    For any graph $G$ with a fixed $\Delta(G) \ge 5$ and any $\lambda\geq 3$ the problem of verifying if $BBC_\lambda(G, T)\leq \lambda + 3$, where $T$ is a spanning tree of $G$, is $\NP$-complete.
\end{theorem}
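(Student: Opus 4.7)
The plan is first to argue membership in $\NP$---a candidate coloring can be verified in $O(n+m)$ time against both $G$- and $T$-edge constraints---and then to polynomially reduce from $3$-coloring on graphs of maximum degree at most $4$, which is a classical $\NP$-complete problem.

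The structural observation I would exploit is that for $\lambda \ge 3$ with only $\lambda+3$ colors available, every color in $\{4, 5, \ldots, \lambda\}$ is too central to admit any partner satisfying $|c(u) - c(v)| \ge \lambda$; since $T$ is a spanning tree, every vertex of $G$ has a tree neighbor, so all vertices of $G$ must take values in the low palette $L = \{1, 2, 3\}$ or the high palette $H = \{\lambda+1, \lambda+2, \lambda+3\}$ (for $\lambda = 3$ this union is simply $\{1,\ldots,6\}$). Each tree edge must have one endpoint in $L$ and the other in $H$, and because $T$ is connected its unique bipartition $(A, B)$ is respected by every valid coloring: one part is mapped entirely into $L$ and the other entirely into $H$.

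Given a 3-coloring instance $G'$ with $\Delta(G') \le 4$ and vertices $v_1, \ldots, v_n$, I would take $V(G) = V(G') \cup \{p_1, \ldots, p_n\} \cup \{x_1, \ldots, x_{n-1}\}$, let $T$ be the path $p_1 - x_1 - p_2 - x_2 - \cdots - x_{n-1} - p_n$ together with the leaf edges $\{v_i, p_i\}$, and set $E(G) = E(T) \cup E(G')$. The bipartition of $T$ puts all $p_i$'s on one side and all $v_i$'s together with all $x_j$'s on the other, so any valid backbone coloring sends $c(v_1), \ldots, c(v_n)$ either entirely into $L$ or entirely into $H$. Either way, relabelling values in $H$ by $h \mapsto h - \lambda$ if necessary, the $G'$-edges $\{v_i, v_j\} \in E(G)$ encode exactly the proper $3$-coloring constraints on $G'$. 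Conversely, any proper $3$-coloring $c'$ of $G'$ extends to a valid backbone coloring of $(G, T)$ by $c(v_i) = c'(v_i)$, $c(p_i) = \lambda+3$, $c(x_j) = 1$, and all tree and non-tree constraints can be checked directly.

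The one genuinely delicate point is the degree budget. Each $v_i$ ends up with $G$-degree $\deg_{G'}(v_i) + 1 \le 5$ (its at most four $G'$-neighbors plus the single tree edge to $p_i$), each $p_i$ has $G$-degree at most $3$, and each $x_j$ has $G$-degree exactly $2$, so $\Delta(G) \le 5$ as required; for fixed $\Delta > 5$ one pads with an irrelevant high-degree gadget. The construction is plainly polynomial, which, combined with $\NP$-membership, yields the theorem.
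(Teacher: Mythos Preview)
The paper does not supply its own proof of this theorem; it is quoted from \cite{janczewski2015computational}, with the added remark that the original statement there assumed $\lambda \ge 4$ but the argument already covers $\lambda = 3$. There is therefore no in-paper proof to compare your attempt against.

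That said, your argument is correct and self-contained. The structural observation---that with only $\lambda+3$ colors and a spanning tree backbone every vertex is forced into $L=\{1,2,3\}$ or $H=\{\lambda+1,\lambda+2,\lambda+3\}$, and that the unique bipartition of $T$ must align with the $L$/$H$ split---is exactly what collapses the backbone constraints to a plain $3$-coloring of the $G'$-layer. Your caterpillar tree places all $v_i$ on the same side of the bipartition, the degree count is right ($\deg_G(v_i)\le 5$, all auxiliary vertices have degree at most $3$), and the explicit extension $c(p_i)=\lambda+3$, $c(x_j)=1$ satisfies every tree edge since $|\lambda+3-1|=\lambda+2\ge\lambda$ and $|\lambda+3-c'(v_i)|\ge\lambda$. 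One cosmetic point: as written the construction may give $\Delta(G)<5$ when $G'$ has no vertex of degree $4$; this is harmless since $3$-coloring is already $\NP$-complete on $4$-regular (even planar) graphs, or, as you note, one pads for larger fixed degree bounds.
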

Interestingly, the original statement of the above theorem in \cite{janczewski2015computational} assumed $\lambda \ge 4$, but their actual proof allows for improving the result and cover also the case $\lambda = 3$ as well.

\subsection{Our work}

The above considerations show that there exists a gap for graphs with $\Delta(G) = 4$, when the backbone is sufficiently simple (in particular, it is bipartite). Our contribution, as highlighted in \Cref{tab:summary}, consists of solving the hardness of several problems for such graphs and well-known classes of backbones, considered in the literature (matchings, galaxies, paths, trees).
Although the classification of problems is not exhaustive, it may be considered as a major step towards this goal.

\begin{table}[ht]
    \centering\footnotesize
    \setlength\tabcolsep{5pt}
    \begin{tabular}{|c|c|c|c|c|c|c|c|c|c|} 
        \hline
        $\lambda$ & $k$ & \multicolumn{2}{|c|}{$H = M$} & \multicolumn{2}{|c|}{$H = S$} & \multicolumn{2}{|c|}{$H = P$} & \multicolumn{2}{|c|}{$H = T$} \\
        \hline
        $2$
            & $3$ & $\mathcal{O}(n)$ & \cite{broersma2009lambda} & $\mathcal{O}(n)$ & \cite{broersma2009lambda} & $\mathcal{O}(n)$ & \cite{broersma2007backbone} & $\mathcal{O}(n)$ & \cite{broersma2007backbone} \\
            & $4$ & $\NPC$ & Thm \ref{thm:matching-lambda+2} & $\NPC$ & Thm \ref{thm:matching-lambda+2} & $\mathcal{O}(n)$ & \cite{broersma2007backbone} & $\mathcal{O}(n)$ & \cite{broersma2007backbone} \\
            & $5$ & $\mathcal{O}(n)$ & \cite{araujo2022backbone} & ? & & ? & & ? & \\
            & $6$ & --- & & $\mathcal{O}(n)$ & \cite{mivskuf2009backbone} & $\mathcal{O}(n)$ & \cite{mivskuf2009backbone} & $\mathcal{O}(n)$ & \cite{mivskuf2009backbone} \\
        \hline

        $3$
            & $4$ & $\mathcal{O}(n)$ & \cite{contributions} & $\mathcal{O}(n)$ & \cite{contributions} & $\mathcal{O}(n)$ & \cite{contributions} & $\mathcal{O}(n)$ & \cite{contributions} \\
            & $5$ & $\NPC$ & Thm \ref{thm:matching-lambda+2} & $\NPC$ & Thm \ref{thm:matching-lambda+2} & $\mathcal{O}(n)$ & \cite{contributions} & $\mathcal{O}(n)$ & \cite{contributions} \\
            & $6$ & $\mathcal{O}(n)$ & Thm \ref{thm:matching-lambda+3} & $\NPC$ & Thm \ref{thm:galaxy-lambda+3} & $\NPC$ & Thm \ref{thm:path-lambda+3} & $\NPC$ & Thm \ref{thm:path-lambda+3} \\
            & $7$ & --- & & $\mathcal{O}(n)$ & Thm \ref{thm:galaxy-lambda+4}  & \texttt{NC} & Thm \ref{thm:path-lambda+4} & ? & \\
            & $8$ & --- & & --- & & --- & & $\mathcal{O}(n)$ & Thm \ref{thm:miskuf-extension} \\
        \hline

        $\ge 4$
            & $\lambda + 1$ & $\mathcal{O}(n)$ & \cite{contributions} & $\mathcal{O}(n)$ & \cite{contributions} & $\mathcal{O}(n)$ & \cite{contributions} & $\mathcal{O}(n)$ & \cite{contributions} \\
            & $\lambda + 2$ & $\NPC$ & Thm \ref{thm:matching-lambda+2} & $\NPC$ & Thm \ref{thm:matching-lambda+2} & $\mathcal{O}(n)$ & \cite{contributions} & $\mathcal{O}(n)$ & \cite{contributions} \\
            & $\lambda + 3$ & $\mathcal{O}(n)$ & Thm \ref{thm:matching-lambda+3} & $\NPC$ & Thm \ref{thm:galaxy-lambda+3} & $\NPC$ & Thm \ref{thm:path-lambda+3} & $\NPC$ & Thm \ref{thm:path-lambda+3} \\
            & $\lambda + 4$ & --- & & $\mathcal{O}(n)$ & Thm \ref{thm:galaxy-lambda+4} & \texttt{NC} & Thm \ref{thm:path-lambda+4} & $\NPC$ & Thm \ref{thm:tree-lambda+4} \\
            & $\lambda + 5$ & --- & & --- & & --- & & ? & \\
            & $\lambda + 6$ & --- & & --- & & --- & & $\mathcal{O}(n)$ & \cite{havet2014circular} \\\hline
    \end{tabular}
    \caption{Summary of the complexities of problem of finding $\lambda$-backbone $k$-coloring for $\Delta(G) \le 4$ for different values of $\lambda$, $k$, and various classes of backbones: ? denotes remaining open cases; --- indicates that the case above it was already trivial (there was always such a coloring); \texttt{NC} denotes that the respective decision problem is trivial, but its proof is non-constructive and there is no known polynomial time algorithm finding the coloring.}
    \label{tab:summary}
\end{table}

Note that the running times are provided for finding the coloring (if it exists). We chose this convention simply because the last cases presented in the table above are trivial when formulated as decision problems: e.g. one can answer the question $BBC_\lambda(G, T) \le \lambda + 6$ by simply returning \texttt{YES} in constant time since it holds for every appropriate $G$ and $T$, even though finding a feasible coloring requires linear. Similarly, solving $BBC_\lambda(G, P) \le \lambda + 4$ is also trivial, but the fastest known algorithm finding such a coloring requires exponential time (see \Cref{thm:path-lambda+4}).

The paper is organized according to the classes of backbones in increasing complexity: from matching and galaxy (collection of vertex-disjoint stars) backbones to (Hamiltonian) path and (spanning) tree. In each section we divide between easy cases, computable in polynomial time, and $\NP$-complete ones.
Note that the algorithm for galaxy and tree backbones are equally applicable for instances with matching and path backbones, respectively. And on the other hand any $\NP$-completeness result for matching and path backbones establishes hardness for galaxy and tree backbones, respectively.

Let us note in passing that for all cases considered in this paper we may assume without loss of generality that all vertices in matchings and galaxies are incident to at least one backbone edge (i.e. there are no isolated vertices in $M$ or $S$).
Clearly, the $\NP$-completeness of the more general problem (with isolated vertices) follows from the $\NP$-completeness of the restricted one.
When $k \ge 5$ there exists a linear procedure of coloring the vertices isolated in the backbone, because from $\Delta(G) \le 4$ by the pidgeonhole principle we can just color the remaining vertices in a greedy way. And finally, the only cases with $k \le 4$ are already covered in \Cref{tab:summary} by existing results \cite{broersma2007backbone,broersma2009lambda,contributions}.

\section{Matching backbone}

\subsection{The easy cases}

We start with an almost warm-up exercise by obtaining the following result for both matchings and $1$-degenerate backbones:
\begin{theorem}
    \label{thm:miskuf-extension}
    Let $G$ be a graph with $\Delta(G) \le 4$ and $\lambda \ge 2$. Then:
    \begin{itemize}
        \item [$(a)$] if $M$ is a matching in $G$, then $BBC_\lambda(G, M) \le 2 \lambda + 1$,
        \item [$(b)$] if $H$ is a $1$-degenerate subgraph of $G$, then $BBC_\lambda(G, H) \le 2 \lambda + 2$.
    \end{itemize}
    The respective colorings can be found in $O(n)$ time.
\end{theorem}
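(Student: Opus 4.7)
The plan is to prove both parts by linear-time greedy algorithms on orderings dictated by the $1$-degeneracy of the backbone. For part (b), the $1$-degeneracy of $H$ gives an ordering $v_1, \ldots, v_n$ of $V(G)$ (computable in $O(n)$ time as the reverse of an $H$-peeling sequence) in which each $v_i$ has at most one $H$-neighbor in $\{v_1, \ldots, v_{i-1}\}$. We process the vertices in this order using the palette $\{1, \ldots, 2\lambda + 2\}$: the color of $v_i$ must differ from the colors of its at most four already-colored $G$-neighbors and must lie outside the length-$(2\lambda - 1)$ interval centered at the color of its unique already-colored $H$-neighbor $u$ (if any). A direct count gives at most $3 + (2\lambda - 1) = 2\lambda + 2$ distinct forbidden colors, exactly the palette size.

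The only configuration in which the entire palette is forbidden has $c(u)$ in the interior of $\{1, \ldots, 2\lambda + 2\}$ and the three non-$H$ $G$-neighbors of $v_i$ occupying exactly the three ``boundary'' colors outside the $H$-interval. In that borderline case the algorithm applies a local swap: it recolors $u$ with a boundary color, freeing an interior color for $v_i$. The swap is justified by an inductive invariant, maintained along the unique $H$-ancestor path above $u$, that always keeps at least one boundary slot available. For part (a), the matching $M$ is a special case of a $1$-degenerate backbone and the same ordering applies; the improvement to $2\lambda + 1$ exploits the global constraint that each vertex has at most one matching partner, which we leverage by orienting $M$ and preassigning a ``low'' role (color range $\{1, \ldots, \lambda + 1\}$) or ``high'' role (color range $\{\lambda + 1, \ldots, 2\lambda + 1\}$) to each matched vertex, so that the $\lambda$-gap on matching edges is built into the role partition. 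This saves one color relative to part (b) and removes the need for the exchange step.

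The main obstacle is verifying that the local swap in part (b) always succeeds, since at step $i$ the vertex $u$ has accumulated constraints from neighbors colored after its own assignment. I expect to resolve this by propagating the swap one step further up the $H$-tree when $u$ itself has no boundary slot available; because $H$ is a forest, this cascade follows a single path and terminates at an $H$-root or $H$-leaf with full flexibility. Combined with $\Delta(G) \le 4$, each swap is $O(1)$ amortized work and the full algorithm (including the computation of the ordering) runs in $O(n)$ time.
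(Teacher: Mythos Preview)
Your approach is genuinely different from the paper's, and the difference is instructive. The paper does not run a greedy algorithm for general $\lambda$ at all: it invokes the known bounds $BBC_2(G,M)\le 5$ and $BBC_2(G,H)\le 6$ for $\Delta(G)\le 4$ (the cited results of Mi\v{s}kuf et al.\ and Araujo et al.) as black boxes, and then applies the explicit monotone map
\[
c(v)=(\lambda-2)\left\lfloor\frac{c'(v)-1}{2}\right\rfloor+c'(v),
\]
which sends $\{1,2,3,4,5,6\}$ to $\{1,2,\lambda+1,\lambda+2,2\lambda+1,2\lambda+2\}$. Any $2$-gap in $c'$ becomes a $\lambda$-gap in $c$, so both parts follow in one line. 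This buys a proof with no case analysis and no recoloring.

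Your direct route, by contrast, has two real gaps. In part~(b), the cascade you sketch is not along a single upward path: when you attempt to recolor $u$, the vertex $u$ may already have several colored $H$-\emph{children} (not only its $H$-ancestor), each imposing its own length-$(2\lambda-1)$ forbidden interval on $u$. Nothing in the $1$-degeneracy ordering prevents this, and these downward constraints can block every ``boundary'' color for $u$ simultaneously; pushing the swap upward does not relax them. So neither termination nor success of the cascade is established, and the claimed $O(1)$ amortized cost is unsupported.

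In part~(a), your role ranges $\{1,\dots,\lambda+1\}$ and $\{\lambda+1,\dots,2\lambda+1\}$ overlap at $\lambda+1$, so the $\lambda$-gap on a matching edge is \emph{not} automatic (a low endpoint colored $2$ and a high endpoint colored $\lambda+1$ differ by $\lambda-1$). More seriously, for $\lambda=2$ your claim specializes to $BBC_2(G,M)\le 5$, which is precisely the Mi\v{s}kuf--Araujo theorem whose original greedy proof was incorrect and required a substantially more delicate argument (using separation pairs and SPQR-trees). A plain role-based greedy cannot be expected to recover that result.
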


\begin{proof}
First, we infer from \Cref{miskuf:d,miskuf:matching} that for $\Delta(G) \le 4$ it holds that $BBC_2(G, H) \le 6$ for tree, path and galaxy backbones and $BBC_2(G, M) \le 5$ for matching backbones.

However, one can go further and for a general $\lambda$ proceed in the following way: first obtain a $2$-backbone coloring $c'$ of $G$ with backbone $H$ and then for all $v \in V(G)$ define
\begin{align*}
    c(v) = (\lambda - 2) \left\lfloor\frac{c'(v) - 1}{2}\right\rfloor + c'(v).
\end{align*}
It is sufficient to spot that such a coloring indeed maps colors $\{1, 2, 3, 4, 5, 6\}$ to $\{1, 2, \lambda + 1, \lambda + 2, 2 \lambda + 1, 2 \lambda + 2\}$ exactly in this order so that all the non-backbone edge requirements are trivially preserved between $c'$ and $c$.
Finally, $|c'(u) - c(v)| \ge 2$ immediately implies that $|c'(u) - c(v)| \ge \lambda$ for any edge $\{u, v\} \in E(H)$, which completes the proof.

The complexities follows directly from the respective complexities for finding the colorings from \Cref{miskuf:d,miskuf:matching}, as the  transformation described above runs in linear time.
\end{proof}

Interestingly, this result seems to be the best known bound for tree backbones when $\lambda = 3$, but e.g. for matching backbones we can obtain even better results.
In order to do so it is useful to recall the extremal graphs approach described in \cite{broersma2007backbone,broersma2009lambda} which bounded $BBC_\lambda(G, H)$ in terms of $\chi(G)$ and to combine it with the famous Brooks' theorem which will also be used in several other proofs throughout this paper:
\begin{theorem}[\cite{lovasz1975three}]
    \label{Observation:Brooks}
    For any graph $G$ it holds that either it is a clique or an odd cycle, or $\chi(G) \le \Delta(G)$. We can distinguish these cases and find the appropriate coloring in both cases in $\mathcal{O}(m + n)$ time.
\end{theorem}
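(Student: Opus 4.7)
Since $\chi(G)$ and $\Delta(G)$ decompose over connected components, my plan is to assume $G$ is connected. I first dispatch the trivial cases: $|V(G)|\le 2$, and $\Delta(G)\le 2$, where $G$ is a path or cycle and a direct check recognizes the odd-cycle exception and otherwise gives a $2$-coloring in linear time. The remainder of the argument treats $\Delta=\Delta(G)\ge 3$.

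Case A: $G$ is not $\Delta$-regular. Pick a vertex $v_0$ with $\deg(v_0)<\Delta$, run BFS from $v_0$, and let $v_1=v_0,v_2,\ldots,v_n$ be the BFS order. I would then color greedily in reverse order $v_n,v_{n-1},\ldots,v_1$. Every $v_i$ with $i\ge 2$ has its BFS parent $v_j$ with $j<i$ still uncolored at the moment $v_i$ is processed, so at most $\Delta-1$ of its neighbors are already colored; and $v_0$ has degree $<\Delta$. Thus a palette of $\Delta$ colors suffices, and BFS plus greedy coloring run in $O(m+n)$.

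Case B: $G$ is $\Delta$-regular. If $G=K_{\Delta+1}$ I report the clique (detectable in $O(m+n)$ by counting). Otherwise I distinguish two subcases. Subcase B1: $G$ has a cut vertex $v$. I split $G$ into the blocks hanging off $v$; in each block the vertex $v$ has degree $<\Delta$, so Case A yields a $\Delta$-coloring of each block, and I permute the palette on subsequent blocks so that they agree on $v$. Subcase B2: $G$ is $2$-connected. Here the heart of the argument is to locate a \emph{Brooks triple}: a vertex $v$ with two non-adjacent neighbors $u,w$ such that $G-\{u,w\}$ is connected. Given such a triple, I run a DFS in $G-\{u,w\}$ rooted at $v$ and list the remaining vertices in DFS order; then I color in the order $u,w,\text{(reverse DFS ending at }v\text{)}$, giving $u$ and $w$ the same color. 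Every internal vertex has a later neighbor (its DFS parent), so at most $\Delta-1$ earlier neighbors; $v$ sees at most $\Delta-1$ distinct colors on its neighborhood because $u,w$ share one. Hence $\Delta$ colors suffice.

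The main obstacle is Subcase B2: proving that a Brooks triple exists and extracting it in linear time. For existence I would split on connectivity of $G$: if $G$ is $3$-connected, any vertex $v$ together with any two non-adjacent neighbors $u,w$ works (non-adjacent ones exist since $G\ne K_{\Delta+1}$), as $G-\{u,w\}$ remains connected; if $G$ is exactly $2$-connected, I would analyze the block tree of $G-x$ for a carefully chosen $x$ to locate $v,u,w$ using a vertex of a leaf block. For the algorithmic side I would rely on a linear-time realization such as Skulrattanakulchai's DFS-based procedure, which finds the triple and the compatible ordering simultaneously using lowpoint information during a single DFS traversal. Combined with Cases A and B1 and the greedy finish, this yields the promised $O(m+n)$ bound.
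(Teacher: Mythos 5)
The paper does not prove this statement; it is a citation to Lovász's ``Three short proofs in graph theory,'' with the linear-time claim resting on standard implementations of that proof. Your sketch is precisely Lovász's argument (non-regular case via reverse-BFS greedy with a low-degree root; cut-vertex case via recursion plus palette permutation; $2$-connected regular case via a vertex with two non-adjacent neighbors identified in the greedy order and a reverse-DFS ordering in which every non-root vertex has an uncolored later neighbor), so it is the same approach the cited reference takes, and your pointer to Skulrattanakulchai's DFS-based realization is the right way to back the $\mathcal{O}(m+n)$ claim. One small phrasing slip: in Subcase B2 for $3$-connected $G$ you write ``any vertex $v$ together with any two non-adjacent neighbors,'' but a particular $v$ may have a clique neighborhood; what you need (and what the argument uses) is that \emph{some} vertex has two non-adjacent neighbors, which follows because a connected $\Delta$-regular graph in which every neighborhood is a clique must be $K_{\Delta+1}$.
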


\begin{theorem}
    \label{thm:matching-lambda+3}
    Let $G$ be a graph with $\Delta(G) \le 4$ and $M$ be a matching in $G$.
    Then for $\lambda \ge 3$ it holds that $BBC_\lambda(G, M) \le \lambda + 3$. A suitable coloring can be found in $\mathcal{O}(n)$ time.
\end{theorem}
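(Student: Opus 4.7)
My first step is to reduce the theorem to the base case $\lambda = 3$. Given any proper $3$-backbone $6$-coloring $c_3$ of $G$ with backbone $M$, I define $c_\lambda(v) = c_3(v)$ when $c_3(v) \in \{1, 2, 3\}$ and $c_\lambda(v) = c_3(v) + (\lambda - 3)$ when $c_3(v) \in \{4, 5, 6\}$. The image lies in $\{1, 2, 3, \lambda+1, \lambda+2, \lambda+3\}$, the map is injective so properness is preserved, and since every $M$-edge under $c_3$ must straddle $\{1, 2, 3\}$ and $\{4, 5, 6\}$ (the gap inside either triple is at most $2 < 3$), its new distance under $c_\lambda$ is at least $\lambda$. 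So the whole theorem is implied by establishing $BBC_3(G, M) \le 6$ in linear time.

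For $\lambda = 3$ the plan is to exploit the structure of $G - M$. By the WLOG assumption from Section~1 the matching $M$ is perfect, so $|V(G)|$ is even, $G$ has no $K_5$-component, and $\chi(G) \le 4$ by Brooks' theorem. The key further fact is that $\Delta(G - M) \le 3$, so a second application of Brooks' theorem (\Cref{Observation:Brooks}) produces a proper $3$-coloring $\psi\colon V(G) \to \{1, 2, 3\}$ of $G - M$ in linear time, \emph{provided} $G - M$ has no $K_4$-component. Assuming this, the construction is a single orientation step: orient each matching edge $\{u, v\}$ as $u \to v$ whenever $\psi(u) \le \psi(v)$ (ties broken arbitrarily), let $V_L$ and $V_H$ be the resulting sets of tails and heads, and set $c(v) = \psi(v)$ for $v \in V_L$ and $c(v) = \psi(v) + 3$ for $v \in V_H$. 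Non-matching edges have $\psi(u) \ne \psi(v)$, so either both endpoints sit on the same side of the bipartition (and inherit the $\psi$-difference) or they lie in the disjoint intervals $\{1, 2, 3\}$ and $\{4, 5, 6\}$; every matching edge satisfies $c(v) - c(u) = \psi(v) - \psi(u) + 3 \ge 3$ by construction.

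The main obstacle is therefore the case when $G - M$ actually contains $K_4$-components. These correspond exactly to what I call \emph{$K_4$-gadgets} in $G$: four mutually adjacent vertices whose only external edges are the four matching edges to partners outside, and a short degree calculation using $\Delta(G) \le 4$ shows that any two such gadgets must be vertex-disjoint. My plan for this case is to pre-color each gadget by splitting its four vertices as two placed in $V_L$ with colors $\{1, 2\}$ and two placed in $V_H$ with colors $\{5, 6\}$; this destroys the offending $K_4$-component of $G - M$ and transmits to the four outside matching partners only list constraints of the forms $\{1, 2, 3\}$, $\{2, 3\}$, $\{1, 2\}$, $\{1, 2, 3\}$ (in the shifted palette of $\psi$). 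I would then extend $\psi$ on the now $K_4$-component-free remainder of $G - M$ via the same orientation-and-coloring scheme of the generic case, exploiting the freedom in choosing which vertex of each gadget receives which pre-color to guarantee that the induced size-$\ge 2$ lists always admit a consistent extension --- including the subcase when several gadgets are interconnected by matching edges, as in the ``$K_4$-prism'' configuration of two gadgets joined by a perfect matching between them. All substeps --- two invocations of Brooks' theorem, one pass to enumerate the vertex-disjoint $K_4$-gadgets, orienting $M$, and the list extension --- run in $O(n + m) = O(n)$ since $\Delta(G) \le 4$, matching the claimed running time.
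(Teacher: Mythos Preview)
Your approach is genuinely different from the paper's and worth contrasting. The paper applies Brooks' theorem to $G$ itself (not to $G-M$): after disposing of the odd-cycle and complete-graph cases it gets $\chi(G)\le 4$ and then simply invokes the result of Broersma et al.\ \cite{broersma2009lambda} that $\chi(G)\le 4$ already forces $BBC_\lambda(G,M)\le\lambda+3$, with a constructive linear-time proof. Your route --- reduce to $\lambda=3$, apply Brooks to $G-M$ to get a $3$-coloring $\psi$, then orient $M$ and shift the heads by $3$ --- is more self-contained, and the generic case (no $K_4$-component in $G-M$) together with the reduction to $\lambda=3$ are both correct and pleasant.

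The gap is the $K_4$-gadget case, which is only sketched. Your plan is to \emph{pre-color} each $K_4$ with $\{1,2,5,6\}$ and afterwards extend $\psi$ on the remainder subject to the induced size-$2$ lists on two of the four partners; but list-$3$-coloring a graph of maximum degree~$3$ with some size-$2$ lists is not automatic, and ``exploiting the freedom'' in permuting the $K_4$ pre-coloring is asserted rather than proved. A cleaner order is the reverse: first $3$-color $G'-M'$ unconstrained, and then note that each $K_4$-vertex $u$ inherits a list of size exactly~$4$ (namely $\{1,4,5,6\}$, $\{1,2,5,6\}$, or $\{1,2,3,6\}$ according to $\psi$ of its partner), so the $K_4$ is colorable because $K_4$ is $4$-choosable; the partner's side $V_L/V_H$ is then forced and causes no further conflict. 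Even this, however, covers only gadgets whose four partners all lie outside the $K_4$-gadgets. When several $K_4$-gadgets are interconnected by matching edges --- your prism and its generalizations --- the lists become mutually dependent, and in the extreme situation where \emph{every} component of $G-M$ is a $K_4$ your scheme has no anchor to start from and essentially reduces to the very statement being proved. That case needs a separate, explicit argument before the proof is complete.
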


\begin{proof}
    From \Cref{Observation:Brooks} we have three subcases: either $G$ is an odd cycle, a complete graph (on at most $5$ vertices), or neither of these.
    In the first case it is sufficient to color a single non-backbone edge using colors $1$ and $2$, and then complete the coloring of other vertices using only colors $1$ and $\lambda + 2$.
    In the second case, one can check all possible $\lambda$-backbone colorings using only colors from the set $\{1, 2, \lambda + 1, \lambda + 2\}$ in $\mathcal{O}(1)$ time and always find at least one valid coloring among them.

    In the remaining case we know that $\chi(G) \le 4$ and it follows from \cite{broersma2009lambda} that in such case $BBC_\lambda(G, M) \le \lambda + 3$. Furthermore, a short glance at their proof of the bound for appropriate $\lambda$ and $\chi(G)$ reveals that it is constructive and it runs in linear time.
\end{proof}

\subsection{The hard case}

Let us start this section by introducing gadgets necessary to obtain the result:
\begin{figure}[H]
\centering
\begin{tikzpicture}[scale=1]
    \node[draw, circle, fill=black, inner sep=2pt] (AA) at (1,2) {};
    \node (AA1) at (1,2.3) {$v_5$};
    \node[draw, circle, fill=black, inner sep=2pt] (BA) at (2,2) {};
    \node (BA1) at (2,2.3) {$v_6$};
    \node[draw, circle, fill=black, inner sep=2pt] (CA) at (3,2) {};
    \node (CA1) at (3,2.3) {$v_7$};
    \node[draw, circle, fill=black, inner sep=2pt] (DA) at (4,2) {};
    \node (DA1) at (4,2.3) {$v_8$};
    \node[draw, circle, fill=black, inner sep=2pt] (AB) at (1,1) {};
    \node (AB1) at (1,0.7) {$v_1$};
    \node[draw, circle, fill=black, inner sep=2pt] (BB) at (2,1) {};
    \node (BB1) at (2,0.7) {$v_2$};
    \node[draw, circle, fill=black, inner sep=2pt] (CB) at (3,1) {};
    \node (CB1) at (3,0.7) {$v_3$};
    \node[draw, circle, fill=black, inner sep=2pt] (DB) at (4,1) {};
    \node (DB1) at (4,0.7) {$v_4$};
    \node[draw, circle, fill=black, inner sep=2pt] (P) at (1.5,4) {};
    \node (P1) at (1.5,3.7) {$v_9$};
    \node[draw, circle, fill=black, inner sep=2pt] (Q) at (3.5,4) {};
    \node (Q1) at (3.5,3.7) {$v_{10}$};
    \node[draw, circle, fill=black, inner sep=2pt] (X) at (1.5,5) {};
    \node (X1) at (1.5,5.3) {$v_{11}$};
    \node[draw, circle, fill=black, inner sep=2pt] (Y) at (3.5,5) {};
    \node (Y1) at (3.5,5.3) {$v_{12}$};
    \draw (AA) -- (BA);
    \draw (AA) -- (AB) [line width=3pt];
    \draw (AA) -- (BB);
    \draw (AB) -- (BB);
    \draw (AB) -- (BA);
    \draw (BA) -- (BB);
    \draw (BB) -- (CB) [line width=3pt];
    \draw (BA) -- (CA) [line width=3pt];
    \draw (CA) -- (DA);
    \draw (CB) -- (DB);
    \draw (CA) -- (DB);
    \draw (CB) -- (DA);
    \draw (DA) -- (DB) [line width=3pt];
    \draw (AA) to[out=150,in=180] (P);
    \draw (AB) to[out=150,in=180] (P);
    \draw (DA) to[out=30,in=0] (Q);
    \draw (DB) to[out=30,in=0] (Q);
    \draw (P) -- (Q);
    \draw (P) -- (X) [line width=3pt];
    \draw (Q) -- (Y) [line width=3pt];
    \draw (CA) to[out=128,in=180] (Y);
    \draw (CB) to[out=128,in=180] (Y);
\end{tikzpicture}
\quad
\begin{tikzpicture}[xscale=0.6]
    \node (P) at (1.5,4) {};
    \node (Q) at (3.5,4) {};
    \node (QL) at (3,4) {};
    \node (QR) at (4,4) {};
    \node[draw, circle, fill=black, inner sep=2pt] (X) at (1.5,5) {};
    \node (X1) at (0.7,4.95) {$v_{11}$};
    \node[draw, circle, fill=black, inner sep=2pt] (Y) at (3.5,4.8) {};
    \node (Y1) at (3.5,5.1) {$v_{12}$};

    \node (Q2) at (6.5,4) {};
    \node (P2) at (8.5,4) {};
    \node (Q2L) at (6,4) {};
    \node (Q2R) at (7,4) {};
    \node[draw, circle, fill=black, inner sep=2pt] (Y2) at (6.5,4.8) {};
    \node (X12) at (6.5,5.15) {$v'_{12}$};
    \node[draw, circle, fill=black, inner sep=2pt] (X2) at (8.5,5) {};
    \node (Y12) at (9.3,5) {$v'_{11}$};
    
    \node[draw, circle, fill=black, inner sep=2pt] (T1) at (1,7) {};
    \node (T11) at (1,7.3) {$u_1$};
    \node[draw, circle, fill=black, inner sep=2pt] (T2) at (4,7) {};
    \node (T21) at (4,7.3) {$u_2$};
    \node[draw, circle, fill=black, inner sep=2pt] (T3) at (6,7) {};
    \node (T31) at (6,7.3) {$u_3$};
    \node[draw, circle, fill=black, inner sep=2pt] (T4) at (9,7) {};
    \node (T41) at (9,7.3) {$u_4$};
    \draw (T1) -- (T2) [line width=3pt];
    \draw (T3) -- (T4) [line width=3pt];

    \draw (Y) -- (Y2);
    
    \draw (T1) -- (X) -- (T2);
    \draw (T1) -- (X2) -- (T3);
    \draw (T4) -- (X);
    \draw (T4) -- (X2);

    \draw (P) -- (X) [line width=3pt];
    \draw (Q) -- (Y) [line width=3pt];
    \draw (QL) -- (Y) -- (QR);
    \draw (P2) -- (X2) [line width=3pt];
    \draw (Q2) -- (Y2) [line width=3pt];
    \draw (Q2L) -- (Y2) -- (Q2R);

    \draw (0.5, 3) rectangle (4.5, 4.12);
    \node at (2.5, 3.5) {$X$};
    \draw (5.5, 3) rectangle (9.5, 4.12);
    \node at (7.5, 3.5) {$X$};
\end{tikzpicture}
\caption{Gadgets $X$ (left) and $Y$ (right) with their backbone edges in bold.}
\label{fig:gadgetXY}
\end{figure}
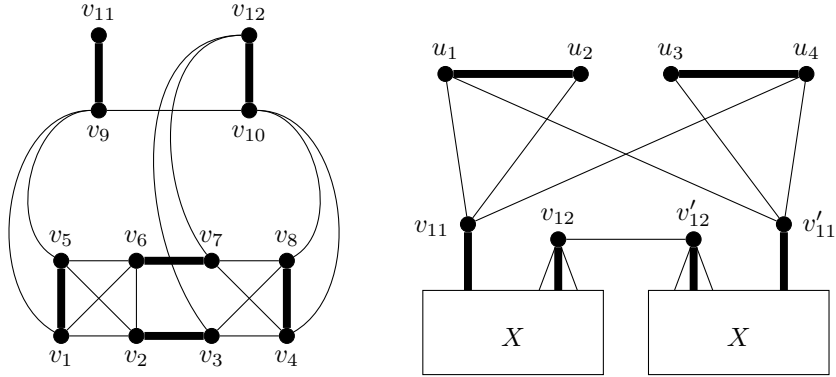

The whole idea of this proof would revolve around forcing proper pairs of colors on certain pairs of vertices. Thus, a following notion is handy:
\begin{definition}
    Let $G$ be a graph and let $c\colon V(G) \to \N_+$ be its coloring.
    For any distinct $u, v \in V(G)$ and distinct $a, b \in \N_+$ we write $c(u, v) = (a, b)$ if it is the case that either $c(u) = a$ and $c(v) = b$, or the other way round.
\end{definition}

\begin{lemma}
    \label{lem:gadgetX}
    Let $G$ with backbone $H$ be the gadget $X$ presented in \Cref{fig:gadgetXY}.
    For any $\lambda \ge 2$ and for every $\lambda$-backbone $(\lambda + 2)$-coloring $c$ it holds that $c(v_{11}, v_{12}) = (1, \lambda + 2)$.
\end{lemma}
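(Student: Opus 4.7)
The plan is to run a structural case analysis on the $K_4$ induced by $\{v_1, v_2, v_5, v_6\}$. Four facts are immediate and will be the workhorses throughout: (i) a $K_4$ demands four distinct colors; (ii) the backbone edge $v_1v_5$ forces $\{c(v_1), c(v_5)\}$ into one of the three pairs $\{1,\lambda+1\}$, $\{1,\lambda+2\}$, or $\{2,\lambda+2\}$, since these are the only pairs in $\{1,\ldots,\lambda+2\}$ at distance $\ge \lambda$; (iii) if a vertex has any backbone neighbor under a $(\lambda+2)$-coloring, its color must lie in $\{1,2,\lambda+1,\lambda+2\}$ because the ``middle'' colors $3,\ldots,\lambda$ have no partner at distance $\ge \lambda$ within the palette; and (iv) consequently $c(v_2), c(v_6), c(v_9), c(v_{10}), c(v_{11}), c(v_{12})$ all lie in $\{1,2,\lambda+1,\lambda+2\}$, since each has a backbone edge.

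First I would eliminate the ``middle'' case $\{c(v_1), c(v_5)\} = \{1, \lambda+2\}$. Combined with $(i)$--$(iv)$ and the $K_4$ constraint, this pins $\{c(v_2), c(v_6)\} = \{2, \lambda+1\}$, and the backbone edges $v_2v_3$, $v_6v_7$ then force $\{c(v_3), c(v_7)\} = \{1, \lambda+2\}$. But $v_4$ and $v_8$ are each adjacent to both $v_3$ and $v_7$, so $c(v_4), c(v_8) \in \{2, 3, \ldots, \lambda+1\}$, and the backbone edge $v_4v_8$ requires $|c(v_4) - c(v_8)| \ge \lambda$. Since this interval has diameter $\lambda-1$, no such pair exists — contradiction.

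Next I would handle the two remaining cases $\{c(v_1), c(v_5)\} = \{1,\lambda+1\}$ and $\{2, \lambda+2\}$, which are exchanged by the palette involution $c \mapsto \lambda+3-c$, so only one needs to be worked out in full. In the case $\{1, \lambda+1\}$, $(i)$--$(iv)$ force $\{c(v_2), c(v_6)\} = \{2, \lambda+2\}$, and both orientations of this pair propagate through backbone edges $v_2v_3$, $v_6v_7$ (and the triangle $v_3v_4v_8$ together with the backbone $v_4v_8$) to give $\{c(v_4), c(v_8)\} = \{1, \lambda+1\}$. This leaves $c(v_{10}) \in \{2, \lambda+2\}$; combined with the edges from $v_{10}$ to $v_3, v_7$ (via $v_{12}$) and the backbone $v_{10}v_{12}$, one concludes $c(v_{10}) = \lambda+2$ and $c(v_{12}) = 1$. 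Finally the triangle constraints on $v_9$ (adjacent to $v_1, v_5, v_{10}$ with known colors $1, \lambda+1, \lambda+2$) leave only $c(v_9) = 2$, and the backbone edge $v_9v_{11}$ forces $c(v_{11}) = \lambda+2$, as desired.

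The main obstacle is purely organizational: there are up to four live sub-cases (two choices for $\{c(v_1), c(v_5)\}$ after removing the middle case, times two orientations of $\{c(v_2), c(v_6)\}$), and each must be chased through the asymmetric right half of the gadget where $v_3v_7$ is \emph{not} an edge. Using the symmetry $c \mapsto \lambda + 3 - c$ of the palette, together with the reflection symmetry exchanging $v_1 \leftrightarrow v_5$, cuts the bookkeeping roughly in half, and each surviving sub-case produces exactly the same verdict $\{c(v_{11}), c(v_{12})\} = \{1, \lambda+2\}$.
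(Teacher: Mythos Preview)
Your proposal is correct and follows essentially the same route as the paper: a case split on the three feasible unordered pairs $\{c(v_1),c(v_5)\}$, elimination of the pair $\{1,\lambda+2\}$ via the contradiction on the backbone edge $v_4v_8$, and propagation through $v_3,v_7,v_4,v_8,v_{10},v_{12},v_9,v_{11}$ in the surviving cases, with the palette involution $c\mapsto\lambda+3-c$ handling the symmetric case (the paper just says ``this case is symmetric''). The only point where you are more compressed than the paper is the step ``both orientations \ldots\ give $\{c(v_4),c(v_8)\}=\{1,\lambda+1\}$'': one orientation of $\{c(v_3),c(v_7)\}$ (namely $\{1,\lambda+2\}$) actually dies by the same $v_4v_8$ contradiction you used in the middle case rather than yielding $\{1,\lambda+1\}$, so it would be cleaner to say so explicitly---but you flag exactly this sub-case bookkeeping in your final paragraph, so the argument is complete.
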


\begin{proof}
    Let us proceed with considering the only three feasible coloring of an edge $\{v_1, v_5\} \in E(M)$:
    \begin{enumerate}
        \item $c(v_1, v_5) = (1, \lambda + 1)$, which implies $c(v_2, v_6) = (2, \lambda + 2)$, because all these vertices form a clique in $G$ -- and if we have a $K_4$ subgraph with all vertices incident to at least one backbone edges, the only available colors are $\{1, 2, \lambda + 1, \lambda + 2\}$.
        Now we have two subcases:
        \begin{enumerate}
            \item either $c(v_3, v_7) = (1, \lambda + 2)$, and therefore $c(v_4, v_8) = (2, \lambda + 1)$ -- a contradiction, since $\{v_4, v_8\} \in E(M)$.
            \item or $c(v_3, v_7) = (2, \lambda + 2)$ and thus $c(v_4, v_8) = (1, \lambda + 1)$.
        \end{enumerate}
        The latter case guarantees that $c(v_{10}) \in \{2, \lambda + 2\}$, and either way $c(v_{12}) \ne \lambda + 1$.
        Since $\{v_{12}, v_3\}, \{v_{12}, v_7\} \in E(G)$, it follows that $c(v_{12}) \notin \{2, \lambda + 2\}$, so it has to be the case that $c(v_{12}) = 1$.

        It is easy to see that this also implies the coloring of the remaining vertices: $c(v_{10}) = \lambda + 2$, $c(v_{9}) = 2$, and $c(v_{11}) = \lambda + 2$.

        \item $c(v_1, v_5) = (2, \lambda + 2)$. This case is symmetric to the previous one, we just substitute every color $i$ with $\lambda + 3 - i$, therefore we obtain $c(v_{11}) = 1$ and $c(v_{12}) = \lambda + 2$.

        \item $c(v_1, v_5) = (1, \lambda + 2)$. This implies that $c(v_2, v_6) = (2, \lambda + 1)$, because these vertices form a clique in $G$. Thus, $c(v_3, v_6) = (1, \lambda + 2)$, and $c(v_4, v_8) = (2, \lambda + 1)$ -- but the last formula contradicts the fact that $\{v_4, v_8\} \in E(M)$.
    \end{enumerate}
\end{proof}

\begin{lemma}
    \label{lem:gadgetY}
    Let $G$ with backbone $H$ be the gadget $Y$ presented in \Cref{fig:gadgetXY}.
    For any $\lambda \ge 2$ and for every $\lambda$-backbone $(\lambda + 2)$-coloring $c$ it holds that:
    \begin{itemize}
        \item[$(a)$] either $c(u_1) = 2$, $c(u_2) = \lambda + 2$, $c(u_3) = 1$, $c(u_4) = \lambda + 1$,
        \item[$(b)$] or $c(u_1) = \lambda + 1$, $c(u_2) = 1$, $c(u_3) = \lambda + 2$, $c(u_4) = 2$.
    \end{itemize}
\end{lemma}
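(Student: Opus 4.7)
The plan is to bootstrap Lemma~\ref{lem:gadgetX} twice and then pin down the colors of $u_1,\ldots,u_4$ by combining ordinary adjacencies to $v_{11}$ and $v'_{11}$ with the two backbone edges $\{u_1,u_2\}$ and $\{u_3,u_4\}$.

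First, I would apply Lemma~\ref{lem:gadgetX} to each of the two embedded copies of $X$ inside $Y$, obtaining $c(v_{11},v_{12})=(1,\lambda+2)$ and $c(v'_{11},v'_{12})=(1,\lambda+2)$. Since $\{v_{12},v'_{12}\}\in E(G)\setminus E(H)$ is an ordinary edge between the two copies, the two pairs must be oriented oppositely: either $c(v_{11})=1$ and $c(v'_{11})=\lambda+2$, or $c(v_{11})=\lambda+2$ and $c(v'_{11})=1$. These two scenarios will correspond exactly to cases $(a)$ and $(b)$ of the lemma, so no further work is needed on the $X$-parts.

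Next, in the first scenario I would tabulate the forbidden colors for each $u_i$. Reading the adjacency pattern from \Cref{fig:gadgetXY}, $v_{11}$ is adjacent to $u_1,u_2,u_4$ and $v'_{11}$ is adjacent to $u_1,u_3,u_4$, which eliminates color $1$ from $c(u_1),c(u_2),c(u_4)$ and color $\lambda+2$ from $c(u_1),c(u_3),c(u_4)$. Hence $c(u_1),c(u_4)\in\{2,\ldots,\lambda+1\}$, $c(u_2)\in\{2,\ldots,\lambda+2\}$, and $c(u_3)\in\{1,\ldots,\lambda+1\}$. A short inspection of the backbone constraint $|c(u_1)-c(u_2)|\ge\lambda$ inside these ranges leaves only $c(u_1)=2$, $c(u_2)=\lambda+2$; applying the same pigeonhole argument to $\{u_3,u_4\}$ yields $c(u_3)=1$, $c(u_4)=\lambda+1$. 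The second scenario is handled by the palette symmetry $i\mapsto\lambda+3-i$ and produces option $(b)$.

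The main obstacle is not the case analysis itself, which is mechanical, but reading the adjacency of $u_1,\ldots,u_4$ to the two copies of $X$ correctly from the figure and confirming that no additional backbone edge incident to the $u_i$ is hidden in the drawing (otherwise the system could be over-constrained). Once that verification is done, the lemma follows immediately from Lemma~\ref{lem:gadgetX} together with the interval-of-available-colors reasoning above.
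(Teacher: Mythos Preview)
Your argument is correct and follows essentially the same route as the paper: apply Lemma~\ref{lem:gadgetX} to both copies of $X$, use the edge $\{v_{12},v'_{12}\}$ to force $c(v_{11})\ne c(v'_{11})$, and then combine the adjacencies of $v_{11},v'_{11}$ to $u_1,\dots,u_4$ with the two backbone edges $\{u_1,u_2\}$, $\{u_3,u_4\}$ to pin down the colors. The only cosmetic difference is that the paper first uses the backbone constraint together with one shared neighbor to obtain the unordered pairs $(2,\lambda+2)$ and $(1,\lambda+1)$ and then orients them via the remaining adjacency, whereas you first intersect all adjacency constraints into intervals and then apply the backbone inequality; both orderings yield the same conclusion.
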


\begin{proof}
    From \Cref{lem:gadgetX} we know that $c(v_{11}, v_{12}) = c(v'_{11}, v'_{12}) = (1, \lambda + 2)$. Moreover, since $\{v_{12}, v'_{12}\} \in E(G)$ it has to be the case that $c(v_{12}, v'_{12}) = (1, \lambda + 2)$ -- and therefore $c(v_{11}, v'_{11}) = (1, \lambda + 2)$ as well.

    Assume now that $c(v_{11}) = 1$ and $c(v'_{11}) = \lambda + 2$. Then, $c(u_1, u_2) = (2, \lambda + 2)$ and $c(u_3, u_4) = (1, \lambda + 1)$.
    And since $\{v'_{11}, u_1\}, \{v_{11}, u_4\} \in E(G)$ it has to be the case that $c(u_1) = 2$, $c(u_2) = \lambda + 2$, $c(u_3) = 1$, $c(u_4) = \lambda + 1$, that is, exactly the case $(a)$.

    The case $c(v_{11}) = \lambda + 2$ and $c(v'_{11}) = 1$ follows analogously resulting in the conditions $(b)$, completing the proof.
\end{proof}

\begin{figure}[H]
\centering
\begin{tikzpicture}[scale=1]
    \node[draw, circle, fill=black, inner sep=2pt] (A) at (1,4) {};
    \node (A1) at (1,4.3) {$a_1$};
    \node[draw, circle, fill=black, inner sep=2pt] (B) at (2,3) {};
    \node (B1) at (2,3.3) {$a_2$};
    \node[draw, circle, fill=black, inner sep=2pt] (C) at (3,4) {};
    \node (C1) at (3,4.3) {$a_3$};

    \node[draw, circle, fill=black, inner sep=2pt] (D) at (-1,4) {};
    \node (D1) at (-1,4.3) {$b_1$};
    \node[draw, circle, fill=black, inner sep=2pt] (E) at (2,2) {};
    \node (E1) at (2.3,2.3) {$b_2$};
    \node[draw, circle, fill=black, inner sep=2pt] (F) at (5,4) {};
    \node (F1) at (5,4.3) {$b_3$};

    \node[draw, circle, fill=black, inner sep=2pt] (X11) at (-1.5,3) {};
    \node (Y11) at (-1.8,3.1) {$u_1$};
    \node[draw, circle, fill=black, inner sep=2pt] (X14) at (-0.5, 3) {};
    \node (Y14) at (-0.2,3.1) {$u_3$};

    \node (P1) at (-1.5,2.38) {};
    \node (P1L) at (-1.8,2.38) {};
    \node (P1R) at (-1.2,2.38) {};
    \node (Q1) at (-0.5,2.38) {};
    \node (Q1L) at (-0.8,2.38) {};
    \node (Q1R) at (-0.2,2.38) {};

    \node[draw, circle, fill=black, inner sep=2pt] (X21) at (4.5,3) {};
    \node (Y21) at (4.2,3.1) {$u''_1$};
    \node[draw, circle, fill=black, inner sep=2pt] (X24) at (5.5, 3) {};
    \node (Y24) at (5.8,3.1) {$u''_3$};

    \node (P2) at (4.5,2.38) {};
    \node (P2L) at (4.2,2.38) {};
    \node (P2R) at (4.8,2.38) {};
    \node (Q2) at (5.5,2.38) {};
    \node (Q2L) at (5.2,2.38) {};
    \node (Q2R) at (5.8,2.38) {};

    \node[draw, circle, fill=black, inner sep=2pt] (X31) at (1.5,1) {};
    \node (Y31) at (1.2,1.1) {$u'_1$};
    \node[draw, circle, fill=black, inner sep=2pt] (X34) at (2.5, 1) {};
    \node (Y34) at (2.8,1.1) {$u'_3$};

    \node (P3) at (1.5,0.38) {};
    \node (P3L) at (1.2,0.38) {};
    \node (P3R) at (1.8,0.38) {};
    \node (Q3) at (2.5,0.38) {};
    \node (Q3L) at (2.2,0.38) {};
    \node (Q3R) at (2.8,0.38) {};

    \draw (-2, 2.5) rectangle (0, 1.5);
    \node at (-1, 2) {$Y_i$};
    \draw (1, 0.5) rectangle (3, -0.5);
    \node at (2, 0) {$Y_j$};
    \draw (4, 2.5) rectangle (6, 1.5);
    \node at (5, 2) {$Y_k$};

    \draw (A) -- (B) -- (C) -- (A);
    \draw (A) -- (D) [line width=3pt];
    \draw (B) -- (E) [line width=3pt];
    \draw (C) -- (F) [line width=3pt];

    \draw (X11) -- (D) -- (X14);
    \draw (P1) -- (X11) [line width=3pt];
    \draw (P1L) -- (X11) -- (P1R);
    \draw (Q1) -- (X14) [line width=3pt];
    \draw (Q1L) -- (X14) -- (Q1R);

    \draw (X21) -- (F) -- (X24);
    \draw (P2) -- (X21) [line width=3pt];
    \draw (P2L) -- (X21) -- (P2R);
    \draw (Q2) -- (X24) [line width=3pt];
    \draw (Q2L) -- (X24) -- (Q2R);

    \draw (X31) -- (E) -- (X34);
    \draw (P3) -- (X31) [line width=3pt];
    \draw (P3L) -- (X31) -- (P3R);
    \draw (Q3) -- (X34) [line width=3pt];
    \draw (Q3L) -- (X34) -- (Q3R);
\end{tikzpicture}
\caption{Clause gadget.}
\label{fig:gadgetK}
\end{figure}
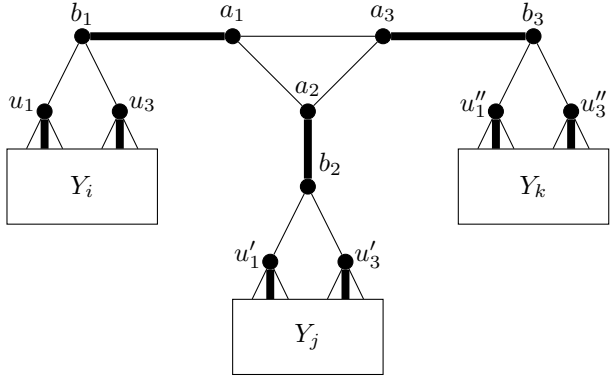

Now we may proceed with the main theorem:
\begin{theorem}
    \label{thm:matching-lambda+2}
    Let $G$ be a graph with $\Delta(G) \le 4$ and $M$ be a matching in $G$. Then for every $\lambda \ge 2$ deciding whether $BBC_\lambda(G, M) \le \lambda + 2$ is $\NP$-complete.
\end{theorem}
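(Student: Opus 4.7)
The plan is to establish $\NP$-membership by the standard guess-and-check argument, and then reduce from a bounded-occurrence version of NAE-3-SAT, which remains $\NP$-complete. Given a formula $\varphi$ with variables $x_1, \ldots, x_n$ and clauses $C_1, \ldots, C_m$, I would construct in polynomial time a graph $G$ with matching $M \subseteq E(G)$ and $\Delta(G) \le 4$ such that $BBC_\lambda(G, M) \le \lambda + 2$ if and only if $\varphi$ is NAE-satisfiable.

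For each variable $x_i$, the reduction introduces a copy of the Y gadget as a variable gadget. By \Cref{lem:gadgetY} its vertices $u_1, u_2, u_3, u_4$ admit exactly one of two color patterns, which I identify with the two truth values of $x_i$. The key observation is that in state $(a)$ the pair $\{u_1, u_3\}$ receives the small colors $\{1, 2\}$ while $\{u_2, u_4\}$ receives the large colors $\{\lambda + 1, \lambda + 2\}$, and in state $(b)$ the roles are exchanged. Hence any external vertex adjacent to both $u_1$ and $u_3$ must take a large color in state $(a)$ and a small color in state $(b)$, modelling a positive literal $x_i$; attaching instead to $u_2$ and $u_4$ reverses this dependence and models $\neg x_i$.

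For each clause $C_j = (\ell_1 \vee \ell_2 \vee \ell_3)$ I use the clause gadget of \Cref{fig:gadgetK}, connecting each $b_k$ to the appropriate output pair of the variable gadget corresponding to $\ell_k$. Because $a_k$ is matched to $b_k$, we have $c(a_k) \in \{1, 2, \lambda + 1, \lambda + 2\}$. The triangle $a_1 a_2 a_3$ demands three pairwise distinct colors drawn from this four-element set, so they cannot all be small ($\{1, 2\}$) nor all large ($\{\lambda + 1, \lambda + 2\}$). Transporting this conclusion through the matching edges $\{a_k, b_k\}$ shows that the three $b_k$ vertices cannot all have small colors nor all have large colors, which via the variable gadget correspondence is precisely the NAE condition on the clause. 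Conversely, any NAE-satisfying assignment yields a coloring of the triangle and of each gadget interior, which can be completed using \Cref{lem:gadgetX,lem:gadgetY}.

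The main obstacle I foresee is twofold: (i) verifying that $\Delta(G) \le 4$ after all inter-gadget connections are made, which is tight at $u_1$ and $u_4$ (internal degree three) and requires a careful count showing that only $u_2$ and $u_3$ carry any spare capacity; and (ii) enforcing that multiple Y gadgets representing the same variable agree on their state. Obstacle (ii) is the reason for reducing from a bounded-occurrence NAE-3-SAT: the slack at $u_2, u_3$ suffices to chain a constant number of Y gadgets per variable via auxiliary edges that can only be properly colored when the linked gadgets share the same state. Working out this synchronization and the routine degree accounting is the bulk of the technical work, but the combinatorial core of the argument rests entirely on the forcing properties already established in \Cref{lem:gadgetX,lem:gadgetY} together with the 3-color requirement of the clause triangle.
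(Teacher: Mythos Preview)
Your proposal is correct and follows essentially the same route as the paper: reduce from \texttt{NAE-3-SAT}, use the $Y$ gadget (via \Cref{lem:gadgetX,lem:gadgetY}) to encode a two-state variable, use the triangle-based clause gadget of \Cref{fig:gadgetK} with $b_k$ wired to the pair $\{u_1,u_3\}$, and chain multiple $Y$ copies per variable through the spare degree at $u_2,u_3$ to synchronise states. The two points where you diverge slightly are both unnecessary complications. First, the paper reduces from \emph{monotone} \texttt{NAE-3-SAT} (all literals positive), which is already $\NP$-complete; this removes any need for a separate $\{u_2,u_4\}$ attachment to model negation. Second, the bounded-occurrence restriction is not needed: if $x_i$ occurs $k_i$ times you simply take $k_i$ copies of $Y$ chained in a path by edges $u_3^{(j)}u_2^{(j+1)}$, and each copy serves exactly one clause occurrence. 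A quick degree count then shows every $u_1,u_2,u_3,u_4$ ends at degree at most~$4$ regardless of $k_i$, so obstacle~(ii) dissolves without appealing to bounded occurrence.
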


\begin{proof}
    We proceed by reducing instances of a well-known $\NP$-complete problem \texttt{NAE-3-SAT} with $n$ variables $x_1, \dots, x_n$ and $m$ $3$-CNF clauses $C_1, \dots, C_m$ \cite{garey-johnson}.
    For simplicity, we will use the version of \texttt{NAE-3-SAT} in which all literals appear without negation \cite{schaefer1978complexity}.

    We construct gadget $Y_i$, representing variable $x_i$ (appearing $k_i$ times in the input formula), as a sequence of $k_i$ subgadgets $Y$ such that vertex $u_3$ of $j$-th subgadget $Y$ is connected with vertex $u_2$ of $(j + 1)$-th gadget $Y$. By \Cref{lem:gadgetY} this ensures that any $\lambda$-backbone $(\lambda + 2)$-coloring of $Y$ all $u_i$ for a given $i = 1, 2, 3, 4$ have exactly the same color in all subgadgets.
    At the same time it increases the degrees of all $u_2$ and $u_3$ by $1$, so at this point their degree is equal to $3$.

    A gadget for clauses is present in \Cref{fig:gadgetK}. Each such gadget is connected to $u_1$ and $u_3$ for appropriate gadgets $Y_i$ representing variables $x_i$ in this clause.
    In particular, if this is a $j$-th appearance of $x_i$ in the whole formula, we connect it to the vertices from the $j$-th subgadget of $Y_i$.

    Note that for any clause gadget all colors assigned to $a_1, a_2, a_3$ have to be distinct. This is possible in a $\lambda$-backbone $(\lambda + 2)$-coloring only if there are in $\{b_1, b_2, b_3\}$ both vertices colored with $\{1, 2\}$ and colored with $\{\lambda + 1, \lambda + 2\}$.

    Now let us proceed with the correctness of the reduction.
    Suppose there exists a $\lambda$-backbone $(\lambda + 2)$-coloring $c$ for a graph built for a given formula. From \Cref{lem:gadgetY} we know that $c(u_1) \in \{2, \lambda + 1\}$ for any vertex $u_1$ in any gadget $Y_j$.
    We define the assignment $\phi$ by setting $\phi(x_j) = 1$ when all vertices $u_1$ in $Y_j$ have color $2$ in $c$, otherwise we set $\phi(x_j) = 0$.
    Now, if there were a clause with all literals set to the identical value, then all $c(u_1, u_3) = (1, 2)$ or all $c(u_1, u_3) = (\lambda + 1, \lambda + 2)$ for all vertices adjacent to this clause gadget. Therefore, $\{c(b_1), c(b_2), c(b_3)\}$ have to be either a subset of $\{1, 2\}$, or a subset of $\{\lambda + 1, \lambda + 2\}$. But then one could not find a valid coloring for vertices $a_i$ in the gadget -- a contradiction. Therefore, each clause has at least one true and at least one false literal for the assignment $\phi$.

    Suppose now that we have an assignment $\phi$ satisfying the input formula. Then, it is sufficient to color each gadget $Y_j$ starting from assigning $c(u_1) = 2$ for all its vertices $u_1$, when $\phi(x_j) = 1$. Otherwise, we set $c(u_1) = \lambda + 1$. By \Cref{lem:gadgetX,lem:gadgetY} one can easily verify that such coloring is feasible for all $Y_j$.
    Moreover, since every clause contains one true and one false literal, it is true that not all vertices $b_1$, $b_2$, $b_3$ have the same set of colors assigned to their respective neighbors $u_1$, $u_3$ in $c$. Thus, we can assign to them in $c$ three different colors -- and therefore each vertex $a_1$, $a_2$, $a_3$ also can get a different color in $c$, thus the coloring is feasible.

    For completeness, let us note that gadgets in \Cref{fig:gadgetXY,fig:gadgetK} have maximum degree at most $4$. Moreover, each clause gadget is connected to exactly one unique vertex $u_1$ and one unique vertex $u_3$ in some $Y_i$, thus increasing the degrees of both by $1$. This suffices to establish that the final degree of these vertices also does not exceed $4$ in total.
\end{proof}

\section{Galaxy backbone}

In this section we prove that all graphs with $\Delta(G) \le 4$ with galaxy backbones have $\lambda$-backbone $(\lambda + 4)$-coloring provided that $\lambda \ge 3$. We complete it by pointing out that in the case $\lambda = 2$ this bound is achieved by the procedure returning $\lambda$-backbone $(2 \lambda + 2)$-coloring, described in the previous section.
Moreover, we establish the hardness of finding $\lambda$-backbone $(\lambda + 3)$-coloring for any $\lambda \ge 3$.

Before we proceed with the results, let us introduce a notion that we will be using throughout the paper. For a coloring $c$ of $G$ let us denote by $C_i = \{v \in V(G)\colon c(v) = i\}$ the set of vertices induced by a color $i$.

\subsection{The easy cases}

Let us first improve on \Cref{thm:miskuf-extension}, which gave a bound $BBC_\lambda(G, S) \le 2 \lambda + 2$:
\begin{theorem}
    \label{thm:galaxy-lambda+4}
    Let $G$ be a graph with $\Delta(G) \le 4$ and $S$ be its spanning galaxy.
    Then for $\lambda \ge 3$ it holds that $BBC_\lambda(G, S) \le \lambda + 4$. A suitable coloring can be found in $\mathcal{O}(n)$ time.
\end{theorem}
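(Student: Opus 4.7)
I split on $\chi(G)$. If $\chi(G) \le 3$, the theorem follows directly from \Cref{thm:havet} applied with $\chi(S) = 2$: galaxies are bipartite, so
\[
  BBC_\lambda(G, S) \le (\lambda + \chi(G) - 2) \cdot 2 - \lambda + 2 \le (\lambda + 1)\cdot 2 - \lambda + 2 = \lambda + 4.
\]
A proper $3$-coloring of $G$ is obtained in $O(n)$ by \Cref{Observation:Brooks}, and Havet's bipartition-shift construction produces the backbone coloring in linear time from it.

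If $\chi(G) = 4$, then Brooks leaves $G = K_5$ as a finite case handled by a direct enumeration of galaxy configurations in $K_5$; in all other subcases Havet only yields $\lambda + 6$, so I build a coloring directly. Writing $C$ and $L$ for the centers and leaves of $S$, the paper's standing assumption that $S$ has no isolated vertices forces $\deg_S(v) \ge 1$ for every $v$, hence $\Delta(G[C]), \Delta(G[L]) \le 3$, so Brooks also applies to both induced subgraphs. The target is to color $C$ out of the ``low'' palette $\{1,2,3,4\}$ and $L$ out of the ``high'' palette $\{\lambda+1,\lambda+2,\lambda+3,\lambda+4\}$, whose union has at most $\lambda + 4$ distinct values (with overlap at $4 = \lambda + 1$ exactly when $\lambda = 3$). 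Under this scheme every backbone edge joins the two halves, and the $\lambda$-separation within a star reduces to the condition that a center of color $i$ has every leaf at a high-palette index $\ge i$.

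The main obstacle is to coordinate the two colorings, and I plan a finer case split on bipartiteness. When both $G[C]$ and $G[L]$ are bipartite, $2$-coloring each (by $\{1,2\}$ and $\{\lambda+3,\lambda+4\}$ respectively) already works and uses only $4$ colors. When exactly one of them is bipartite, a Brooks $3$-coloring of the non-bipartite side yields $5$ colors total, still within budget and achieving minimum backbone separation exactly $\lambda$. The delicate case is when neither is bipartite: there I would choose, independently for each star, whether the center belongs to the low or the high palette, and then resolve the resulting orientations via Kempe-chain-style local swaps that exploit $\Delta(G) \le 4$ to keep the number of colliding constraints per star bounded. For $\lambda = 3$ the overlap at color $4$ forces an extra sanity check that no non-backbone edge connects a low-palette center colored $4$ to a high-palette leaf colored $4$; any such collision is repaired by a single local recoloring within the offending star. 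Each step (Brooks, bipartition test, local swap) runs in $O(n)$, so the whole construction meets the required time bound.
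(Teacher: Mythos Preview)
Your reduction of the case $\chi(G)\le 3$ to \Cref{thm:havet} is clean and correct, and in fact tidier than the paper's treatment of that range. The rest of the plan, however, is not a proof.

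The substantive gap is the $\chi(G)=4$ branch. First, your ``exactly one bipartite'' subcase silently assumes that Brooks yields a $3$-coloring of the non-bipartite side; but $\Delta(G[C]),\Delta(G[L])\le 3$ does not rule out a $K_4$ component, and in that event you need four high colors $\{\lambda+1,\dots,\lambda+4\}$ on leaves while centers sit in $\{1,2\}$, so a center of color $2$ adjacent in $S$ to the leaf colored $\lambda+1$ violates the $\lambda$-constraint. Second, and more seriously, the ``neither bipartite'' subcase is the heart of the difficulty, and you dispatch it with ``Kempe-chain-style local swaps'' and ``choose per star which side gets the low palette'' without saying what the swaps are, what invariant they preserve, or why they terminate. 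Your own constraint table shows how rigid the scheme is: a center colored $4$ forces \emph{every} one of its leaves to color $\lambda+4$, and a center colored $3$ forces all its leaves into $\{\lambda+3,\lambda+4\}$; with up to four leaves that may be mutually adjacent in $G[L]$, there is no local recoloring that obviously works. The $\lambda=3$ collision fix (``a single local recoloring within the offending star'') is likewise asserted, not argued.

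For comparison, the paper does not try to color $G[C]$ and $G[L]$ separately. For $\lambda\ge 4$ it invokes the constructive bound of Broersma et al.\ (Theorem~4(b) in \cite{broersma2009lambda}) for $4$-chromatic graphs with galaxy backbones. For $\lambda=3$, where that citation gives only $8$, it runs a star-by-star greedy induction on the palette $\{1,\dots,7\}$: at each star it case-splits on which of the extreme colors $1,7$ are already forbidden at the root, and shows by a short counting argument on the (at most three) previously colored neighbours of each leaf that some root color in $\{1,2,4,6,7\}$ always leaves every leaf a legal option. That local case analysis is exactly the content your ``Kempe-chain'' sentence is standing in for; as written, the hard case of your plan is not established.
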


\begin{proof}
    Again we start with \Cref{Observation:Brooks} and cover the cases when $G$ is $K_5$, and odd cycle, and a graph with $\chi(G) \le 4$ separately.

    The first one is obvious, since we can color the roots of the stars with the lowest colors starting from $1$ and their leaves with the colors starting from $\lambda + 1$ -- and since there is at least one star in $G$, the maximum color does not exceed $\lambda + n - 1 = \lambda + 4$.

    In the second case we can pick any non-backbone edge and color its endpoints with $1$ and $2$, and then assign colors $2$ and $\lambda + 2$ alternatively along the path.

    Otherwise, we have two subcases, depending on the value of $\lambda$. If $\lambda \ge 4$, then we know that the graph is $4$-colorable and we can use the procedure described in the (constructive) proof of the appropriate upper bound in \cite{broersma2009lambda} (see Theorem 4(b)) to find $\lambda$-backbone $(\lambda + 4)$-coloring of $G$.

    Unfortunately, for $\lambda = 3$, in \cite{broersma2009lambda} there is only a weaker result (Theorem 4(d)) that establishes $BBC_3(G, S) \le 8$. However, we can improve it by considering an induction over stars in the backbone $S$. Let $c_i$ be the partial coloring of first $i$ stars $S_1$, $S_2$, \ldots, $S_i$, and let $F_i(v) = \{c_{i - 1}(u)\colon \{u, v\} \in E(G) \land u \in V(S_1) \cup \ldots \cup V(S_{i - 1})\}$ be a set of colors forbidden for any $v \in V(S_i)$ by $c_{i - 1}$.
    The base case $i = 0$ (i.e. when there are no stars in $S$) holds trivially. Suppose now that $c_{i - 1}$ is a valid partial $3$-backbone $7$-coloring for the graph $G[V(S_1) \cup \ldots \cup V(S_{i - 1})]$ with backbone $S_1 \cup \ldots \cup S_{i - 1}$. Then, we color $i$-th star with a root $v \in V(S_i)$ in the following way:
    \begin{enumerate}
        \item if $1 \notin F_i(v)$, then we set $c_i(v) = 1$. Next, we color the leaves $u$ of $S_i$ sequentially by noticing that there is always at least one color $k \in \{4, 5, 6, 7\}$ available for each $u$, because $u$ has at most $3$ already colored neighbors other than $v$ (implied by $\{u, v\} \in E(S)$ and $\Delta(G) \le 4$), and no more backbone edges, so we can color them sequentially.
        \item if $7 \notin F_i(v)$, then the procedure is identical as above, only with $k \in \{1, 2, 3, 4\}$ instead.
        \item if $1, 7 \in F_i(v)$ and $\deg_H(v) \ge 3$, then it had to be the case that $v$ has at least two neighbors outside of $S_i$ so $\deg_G(v) \ge \deg_H(v) + 2 \ge 5$ -- impossible.
        \item if $1, 7 \in F_i(v)$ and $\deg_H(v) = 2$, then we know that $F_i(v) = \{1, 7\}$. We have three possible obstacles:
        \begin{itemize}
            \item assignment $c_i(v) = 2$ does not work only if $F_i(u) = \{5, 6, 7\}$ for a leaf $u \in V(S_i)$,
            \item assignment $c_i(v) = 6$ does not work only if $F_i(u) = \{1, 2, 3\}$ for a leaf $u \in V(S_i)$,
            \item assignment $c_i(v) = 4$ does not work only if we have $1, 7 \in F_i(u_1)$ and $1, 7 \in F_i(u_2)$ for both leaves $u_1$, $u_2$ of $S_i$. 
        \end{itemize}
        It is easy to observe that we cannot satisfy all three at once -- thus, there has to be at least one proper backbone coloring. 
        \item if $1, 7 \in F_i(v)$ and $\deg_H(v) = 1$, then for the other vertex $u$ of this star it has to be the case that $1, 7 \in F_i(u)$ -- otherwise without loss of generality we could make $u$ the root of $S_i$ instead. However, this means that both $u$, $v$ have at least $4$ colors available from the set $\{2, 3, 4, 5, 6\}$ (since by counting degrees we observe that each can have at most one edge in $G$ with such colors) and there is always a pair of colors such that it respects the condition for $\{u, v\} \in E(S)$.
    \end{enumerate}
    Of course for all vertices $w \in V(S_1) \cup \ldots \cup V(S_{i - 1})$ we set $c_i(w) = c_{i - 1}(w)$. This way we satisfy both all the previously satisifed requirements, along with the new requirements: for the backbone edges within $S_i$, and for the non-backbone edges between $V(S_i)$ and vertices from previous stars, so $c_i$ is a valid $3$-backbone $7$-coloring of $G[V(S_1) \cup \ldots \cup V(S_i)]$ with backbone $S_1 \cup \ldots \cup S_i$.
\end{proof}

\subsection{The hard case}

Let us begin this section with a definition of an appropriate gadget used in the reduction:
\begin{figure}[H]
\centering
\begin{tikzpicture}[scale=0.95]
    \node[draw, circle, fill=black, inner sep=2pt, label={-90:$w$}] (X) at (0, 4) {};
    \foreach \val [count=\i from 1] in {-4, 0, 4} {
        \ifthenelse{\i = 1}{\def\x{180}}{\def\x{0}};
        \node[draw, circle, fill=black, inner sep=2pt, label={\x:$u_{\i}$}] (A\i) at (\val, 5) {};
        \node[draw, circle, fill=black, inner sep=2pt] (B\i) at (\val - 1, 6) {};
        \node[draw, circle, fill=black, inner sep=2pt] (C\i) at (\val + 1, 6) {};
        \node[draw, circle, fill=black, inner sep=2pt] (D\i) at (\val, 7) {};
        \node[draw, circle, fill=black, inner sep=2pt, label={90:$v_{\i}$}] (Y\i) at (\val, 8) {};
        \draw (A\i) -- (B\i) -- (C\i) -- (D\i) -- (A\i) -- (C\i);
        \draw (B\i) -- (D\i);
        \draw (A\i) -- (X) [line width=3pt];
        \draw (B\i) -- (Y\i) -- (C\i) [line width=3pt];
        \draw (D\i) -- (Y\i) [line width=3pt];
    }
\end{tikzpicture}
\caption{$W$-gadget.}
\label{fig:GalaxyXGadget}
\end{figure}
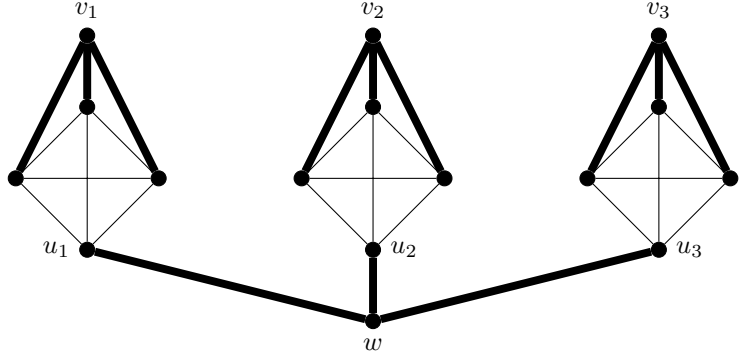

\begin{lemma}\label{lem:SG}
    Let $G$ and $S$ be a $W$-gadget, that is, a graph and its backbone as presented in \Cref{fig:GalaxyXGadget}.
    Then, every $\lambda$-backbone $(\lambda + 3)$-coloring $c$ of $G$ with backbone $S$ has $c(v_1) = c(v_2) = c(v_3) \in \{1, \lambda + 3\}$.
\end{lemma}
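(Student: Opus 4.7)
My plan is to follow the two-step template of the preceding gadget lemmas: first pin down the possible values of $c(v_i)$ for each $i$ individually using the local structure around that vertex, then use the central vertex $w$ (connected by a backbone edge to every $u_i$) to force the three values to coincide. Throughout I assume $\lambda \ge 3$, which is the regime of this section; for $\lambda = 2$ the claim actually fails, since $c(w) = 3$ can accommodate $c(v_1) = 1$ alongside $c(v_2) = 5$.

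The first step is to show that $c(v_i) \in \{1, \lambda + 3\}$. Each $v_i$ has three backbone neighbors $B_i, C_i, D_i$, which form a triangle in $G$, so their colors must be pairwise distinct and each at distance at least $\lambda$ from $c(v_i)$. If $c(v_i) = k$, the palette $\{1, \ldots, \lambda + 3\}$ leaves exactly $\max(0, k - \lambda) + \max(0, 4 - k)$ colors available for the three triangle vertices, and a short check over $k \in \{1, \ldots, \lambda + 3\}$ shows that this count reaches $3$ only at $k = 1$ and $k = \lambda + 3$. In the former case the three neighbors receive exactly the colors $\{\lambda + 1, \lambda + 2, \lambda + 3\}$; since $\{u_i, B_i, C_i, D_i\}$ is a $K_4$ in $G$, this forces $c(u_i) \in \{1, \ldots, \lambda\}$. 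The case $c(v_i) = \lambda + 3$ is symmetric and yields $c(u_i) \in \{4, \ldots, \lambda + 3\}$.

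The second step exploits the backbone edge $u_i w$, which imposes $|c(u_i) - c(w)| \ge \lambda$. If $c(v_i) = 1$, then $c(u_i) \le \lambda$ makes the alternative $c(w) \le c(u_i) - \lambda \le 0$ void, leaving $c(w) \ge c(u_i) + \lambda \ge \lambda + 1$. Symmetrically, $c(v_j) = \lambda + 3$ forces $c(w) \le 3$. Since $\lambda + 1 > 3$ for $\lambda \ge 3$, these two windows are disjoint, so all three $c(v_i)$ must lie in the same one, which is exactly the conclusion.

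The main obstacle is mostly bookkeeping in the first step: one has to verify that no intermediate value of $k$ (in particular the near-misses $k = 2$ and $k = \lambda + 2$, together with $k = \lambda + 1$, which becomes useless exactly once $\lambda \ge 3$) admits three distinct compatible neighbor colors. The synchronization via $w$ is then immediate from the numeric gap $\lambda + 1 > 3$, which is precisely what the standing hypothesis $\lambda \ge 3$ of this section provides.
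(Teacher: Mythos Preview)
Your proof is correct and follows essentially the same two-step approach as the paper: first showing $c(v_i)\in\{1,\lambda+3\}$ by counting the colors available to the three mutually adjacent backbone neighbors of $v_i$, then deriving $c(u_i)\in\{1,\ldots,\lambda\}$ or $c(u_j)\in\{4,\ldots,\lambda+3\}$ and observing that the backbone edges to $w$ make these incompatible. Your write-up is more explicit (and you correctly flag that the argument needs $\lambda\ge 3$, an assumption the paper uses tacitly here and makes explicit only in the theorem that invokes the lemma), but the underlying argument is identical.
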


\begin{proof}
    First, we note that every $v_i$ in any $\lambda$-backbone $(\lambda + 3)$-coloring cannot get any color other than $1$ or $\lambda + 3$ in $c$, otherwise its neighborhood cannot be colored properly -- as there would remain less than $3$ colors in a distance at least $\lambda$ from $c(v_i)$.

    Additionally, suppose that some $v_i$ and $v_j$ have different colors in $c$. Without loss of generality, we assume that $c(v_i) = 1$ and $c(v_j) = \lambda + 3$. Then, $c(u_i) \in \{1, \ldots, \lambda\}$ and $c(u_j) \in \{4, \ldots, \lambda + 3\}$, but in each case there is no appropriate color to be used for $w$ -- a contradiction.
\end{proof}

\begin{theorem}
    \label{thm:galaxy-lambda+3}
    Let $G$ be a graph with $\Delta(G) \le 4$ and $S$ be a spanning galaxy of $G$. Then for every $\lambda \ge 3$ deciding whether $BBC_\lambda(G, S) \le \lambda + 3$ is $\NP$-complete.
\end{theorem}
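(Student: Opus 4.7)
I would prove $\NP$-hardness by reducing from the monotone variant of \texttt{NAE-3-SAT} (all literals positive), which is $\NP$-complete by \cite{schaefer1978complexity}; membership in $\NP$ is immediate. The template is the one from \Cref{thm:matching-lambda+2}: build a variable gadget from copies of the $W$-gadget of \Cref{fig:GalaxyXGadget}, build a clause gadget enforcing the not-all-equal condition, and connect variable outputs to clause gadgets by non-backbone edges.

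For the variable gadget of $x_i$ (occurring $k_i$ times), I would chain $\Theta(k_i)$ copies of the $W$-gadget by adding, between consecutive copies, a single non-backbone edge joining a $v$-output of one copy to a $v$-output of the next. Each $v$-output has free degree exactly $1$ inside a $W$-gadget, so this preserves $\Delta(G) \le 4$; by \Cref{lem:SG} each $v$-output lies in $\{1, \lambda+3\}$, and the inequality constraint on each chain link forces the common color of the three $v$-outputs inside each copy to alternate along the chain. I mark as \emph{positive-sign} the copies carrying the intended truth value of $x_i$ (say, those at even positions). After chain-link bookkeeping each positive-sign copy still has one or two free $v$-outputs, and taking the chain long enough delivers two free positive-sign $v$-outputs per occurrence of $x_i$ to feed into clause gadgets.

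For a clause $C = (x_{i_1} \vee x_{i_2} \vee x_{i_3})$ I would build the clause gadget out of two symmetric $K_4$'s --- a ``$+$'' side and a ``$-$'' side --- glued by a single non-backbone edge between their designated star-centers $b^+, b^-$. In each $K_4$, vertex $b^\pm$ is the star-center with three backbone edges to the leaves $l_1^\pm, l_2^\pm, l_3^\pm$, and the three leaves are pairwise joined by non-backbone edges. The clique structure forces $c(b^\pm) \in \{1, \lambda+3\}$; the non-backbone edge $\{b^+, b^-\}$ forces $c(b^+) \ne c(b^-)$, so exactly one side is colored $1$ and the other $\lambda+3$; and the three leaves of each $b^\pm$ must occupy, pairwise distinct, the three colors of the extreme triple opposite to $c(b^\pm)$. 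Each $l_j^\pm$ is attached via a non-backbone edge to a distinct positive-sign $v$-output of the chain for $x_{i_j}$, consuming two positive-sign outputs per clause--variable pair. A short color count then yields the NAE property: if all three variable outputs have the same color $v^*$, the side of the clause gadget whose center carries color $\ne v^*$ has its three leaves confined to the triple containing $v^*$ while each leaf must avoid $v^*$, leaving only two colors for three pairwise distinct leaves --- a contradiction. Conversely, any not-all-equal input pattern admits a consistent coloring of both sides.

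The principal obstacle is the case analysis for the converse direction --- verifying that every not-all-equal input pattern extends to a valid coloring of both $K_4$'s simultaneously --- which splits into a handful of symmetric sub-cases depending on how the three inputs distribute between colors $1$ and $\lambda+3$, and in each case requires an explicit choice of leaf colors on both the ``$+$'' and ``$-$'' sides. Secondary checks, which I expect to be routine, are a degree audit ($v$-outputs, $b^\pm$, and $l_j^\pm$ each reach degree exactly $4$) and verifying that the backbone remains a galaxy: every backbone edge belongs to exactly one of the stars centered at some $v_i$ of a $W$-gadget, at the $w$-vertex of a $W$-gadget, or at a $b^\pm$ of a clause gadget, and these centers never double as leaves of another star.
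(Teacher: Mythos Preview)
Your reduction is correct, but it differs from the paper's in two substantial ways. First, the paper reduces from monotone \texttt{NAE-4-SAT} rather than \texttt{NAE-3-SAT}, which lets it get away with a single $K_4$ as the clause gadget: its four vertices carry \emph{no} internal backbone edges, and each is attached to the appropriate variable gadget by a \emph{backbone} edge, turning the corresponding $v_1$ into a degree-$4$ star center with one external leaf. The not-all-equal argument is then immediate --- four pairwise distinct colors cannot all come from a three-element triple --- with no mirrored ``$+$/$-$'' structure and no extra case analysis in the converse direction. Second, the paper's variable gadget closes the $2k_i$ copies of the $W$-gadget into a cycle rather than a chain; this is a cosmetic difference, since your alternation argument along a path is equally valid.

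What your route buys is a cleaner separation of concerns: all backbone edges in the clause gadget stay internal to the two stars at $b^{\pm}$, and the clause--variable links are purely non-backbone, so the galaxy audit is trivially local. The price is the doubled clause gadget and the two free positive-sign outputs per occurrence instead of one. The paper's route buys a shorter construction and a one-line NAE argument, at the cost of the slightly subtler observation that adding a backbone leaf to $v_1$ keeps the backbone a galaxy. Both are valid; the paper's is somewhat more economical.
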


\begin{proof}
    We proceed with a reduction from an $\NP$-complete problem \texttt{NAE-4-SAT}, in which for $n$ variables $x_1$, \ldots, $x_n$ and $m$ $4$-CNF clauses $C_1$, \ldots, $C_m$ we ask whether there exists an assignment such that in all clauses there is at least one true literal and at least one false literal. Again, for simplicity, we will use the version of \texttt{NAE-4-SAT} in which all literals appear without negation \cite{schaefer1978complexity}.
    
    For each variable $x_i$ appearing in the formula exactly $k_i$ times we build a gadget $X_i$ by introducing $2 k_i$ $W$-gadgets and connecting $v_{3, j, i}$ with $v_{2, j + 1, i}$ using non-backbone edges to form a cycle over them (identifying $j = 1$ with $j = 2 k_i + 1$) -- where the latter two indices denote index $j = 1, \ldots, 2 k_i$ of $W$-gadget and index of a variable $x_i$, respectively.

    For each clause $C_j$ we create a gadget $K_4$ with all non-backbone edges. Then we connect using backbone edges its vertices representing the literals $x_i$ appearing in this clause to their respective vertices $v_{1, 2 t - 1, i}$ from gadgets $X_i$. Here $t$ denotes that the literal $x_i$ appears in $C_j$ for $t$-th time, counting from the beginning of the whole formula.

    If the input formula is satisfiable with an assignment $\phi$, then it is sufficient to set $c(v_{1, 2 t - 1, i}) = (\lambda + 2) \phi(x_i) + 1$ for all $i$ and $t$. This, clearly enforces a unequivocal coloring of every star $S_i$ in $S$ such that all $c(v_{j, 2 t - 1, i})$ are equal, following \Cref{lem:SG} and the rules of construction of $S_i$ from $W$-gadgets. Moreover, since each clause contains at least one true and one false literal, its respective $K_4$ has at least one neighbor with colors $1$ and $\lambda + 3$. Thus, one can easily color it using colors from the set $\{1, 2, 3, \lambda + 1, \lambda + 2, \lambda + 3\}$ such that all colors are different and they respect the backbone edge conditions. Finally, it is sufficient to complete coloring of the whole graph by finding suitable colorings of $W$-gadgets e.g. if it has $c(v_1) = c(v_2) = c(v_3) = 1$, then one can easily find a suitable coloring, for example using $c(u_1) = c(u_2) = c(u_3) = 1$ and $c(w) = \lambda + 1$ (and colors $\lambda + 1$, $\lambda + 2$, $\lambda + 3$ for the unnamed vertices in cliques) -- and symmetrically we can find one as well in the case $c(v_1) = c(v_2) = c(v_3) = \lambda + 3$.

    To prove the other way it is sufficient to note that in a valid $\lambda$-backbone $(\lambda + 3)$-coloring any $K_4$ representing a clause has to contain at least one vertex with a color from the set $\{1, 2, 3\}$ and one with a color from $\{\lambda + 1, \lambda + 2, \lambda + 3\}$. This way, neighborhood of each such $K_4$ also has to contain at least one vertex with a color from the set $\{1, 2, 3\}$ and one with a color from $\{\lambda + 1, \lambda + 2, \lambda + 3\}$ -- and by \Cref{lem:SG} they have to be equal to either $1$ or $\lambda + 3$. The connections between $W$-gadgets in each $S_i$ ensure the equality of all $c(v_{j, 2 t - 1, i})$ for any fixed $i$, thus we may infer $\phi(x_i) = 1$ when $c(v_{j, 2 t - 1, i}) = 1$ and $\phi(x_i) = 0$ when $c(v_{j, 2 t - 1, i}) = 2$ -- a valid assignment for the formula guaranteeing that each clause contains one true and one false literal.
\end{proof}

\section{Path backbone}

\subsection{The easy case}

Let us recall the following theorem, posed as one of conjectures by none other than Paul Erd\H{o}s in 1990 and originally by Issai Schur in number-theoretic terms \cite{fellows1990transversals}:
\begin{theorem}[\cite{fleischner1992solution}]
    \label{lem:3-split}
    Let $n$ be a positive integer, and let $G$ be a $4$-regular graph on $3 n$ vertices. Assume that $G$ has a decomposition into a Hamiltonian circuit and $n$ pairwise vertex disjoint triangles. Then $\chi(G) = 3$.
\end{theorem}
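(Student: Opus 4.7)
The lower bound $\chi(G) \ge 3$ is immediate because $G$ contains triangles, so the whole task is to produce a proper $3$-coloring. Two structural observations organize the argument. First, the edge-disjointness of the Hamiltonian cycle $C$ and the triangles $T_1, \ldots, T_n$ forces consecutive vertices on $C$ to belong to different triangles: otherwise the cycle edge between them would coincide with a triangle edge. Second, any proper $3$-coloring must assign the three colors bijectively to the vertices of each triangle, so the problem is equivalent to choosing, for every $T_i$, one of the six bijections $V(T_i) \to \{1,2,3\}$ in such a way that consecutive vertices of $C$ receive distinct colors.

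I would attack this existence problem via the Alon--Tarsi method applied to the graph polynomial $P = \prod_{uv \in E(G)}(x_u - x_v)$. Orient $C$ in one direction and orient each triangle $T_i$ cyclically; the resulting orientation $D$ satisfies $d_D^+(v) = d_D^-(v) = 2$ at every vertex, so each variable has degree $2$ in the targeted monomial $\prod_v x_v^2$. The Alon--Tarsi theorem then reduces the problem to showing that the numbers $|EE(D)|$ and $|EO(D)|$ of Eulerian sub-digraphs of $D$ with, respectively, an even and an odd number of edges, are unequal. Every Eulerian sub-digraph of $D$ decomposes into directed cycles, and in $D$ these cycles are either $C$ itself, an individual triangle $T_i$, or a ``hybrid'' cycle obtained by splicing arcs of $C$ with triangle arcs at the vertices of the $T_i$. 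The plan is to enumerate the hybrid cycles by tracking how $C$ enters and leaves each triangle, and to show that the resulting signed count factors as a product over the $T_i$ that is manifestly nonzero.

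The main obstacle is the bookkeeping in the last step, because hybrid directed cycles can be numerous and their parities depend in a subtle way on the global interleaving of $C$ with the $T_i$. The key idea I would try to exploit is the freedom to orient each triangle cyclically in one of two possible directions so that contributions to $|EE(D)| - |EO(D)|$ pair up favorably, reducing the global signed sum to something local at each triangle; this is the crux I expect to struggle with. As a fallback, I would attempt an inductive approach: locate a triangle $T_i$ whose three vertices split $C$ into three arcs that can be spliced into a Hamiltonian cycle on the remaining $3(n-1)$ vertices, verify that the smaller graph still satisfies the hypotheses (in particular, $4$-regularity and edge-disjointness from the remaining triangles), apply induction, and extend the $3$-coloring over $T_i$ using the constraint that its three cycle-neighbors cannot monopolize all three colors. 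The delicate point there is to guarantee that such a cooperative triangle always exists and that splicing does not create a double edge with another triangle.
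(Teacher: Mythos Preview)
The paper does not prove this theorem itself; it quotes it from Fleischner--Stiebitz and remarks that the original proof is non-constructive, via the Alon--Tarsi polynomial method (Combinatorial Nullstellensatz). Your primary approach is therefore the same as the cited one, and your setup---orient $C$ and each $T_i$ cyclically to get out-degree $2$ everywhere, then invoke Alon--Tarsi---is correct. The gap is that you stop exactly at the step that carries all the content: proving $|EE(D)| \neq |EO(D)|$. Your suggested mechanism, choosing triangle orientations so the signed count ``pairs up'' into something local at each $T_i$, does not work: the hybrid directed cycles genuinely entangle $C$ with several triangles at once, and no orientation choice decouples them into a product of per-triangle factors. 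The actual Fleischner--Stiebitz argument sidesteps this combinatorics entirely by computing modulo~$3$: using $x^3 = x$ over $\mathbb{F}_3$, one reduces $P_G = P_C \cdot \prod_i P_{T_i}$ modulo the ideal $(x_v^3 - x_v : v \in V)$ and shows that the coefficient of $\prod_v x_v^2$ is nonzero mod~$3$. That arithmetic reduction is the missing idea, and without it the Eulerian-subgraph count is intractable.

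Your inductive fallback should be dropped. The obstacles you yourself flag are not technicalities but genuine obstructions: after deleting a triangle, splicing the three arcs of $C$ into a Hamiltonian cycle on $3(n-1)$ vertices will in general create a double edge with some other triangle (so the smaller graph no longer satisfies the hypotheses), and even when a valid splice exists, each deleted vertex has two already-colored $C$-neighbors, which can force all three vertices of $T_i$ to the same residual color, blocking the extension. This is precisely why the problem resisted attack until the algebraic proof; no elementary inductive argument along these lines is known.
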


Surprisingly, this theorem helps us to prove the main theorem of this section:
\begin{theorem}
    \label{thm:path-lambda+4}
    Let $G$ be a graph with $\Delta(G) \le 4$ and let $P$ be a Hamiltonian path in $G$.
    Then it is true that $BBC_\lambda(G, P) \le \lambda + 4$. A suitable coloring can be found in $\mathcal{O}^\text{*}\!(1.3217^n)$ time.
\end{theorem}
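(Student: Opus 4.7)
The plan is to reduce the problem to finding a proper $3$-coloring of a carefully chosen subgraph $G' \subseteq G$, and then to invoke Fleischner's theorem (\Cref{lem:3-split}) to guarantee that such a coloring exists. Let $V_L$ and $V_H$ denote the two color classes of the natural bipartition of the Hamiltonian path $P$ (the odd- and even-indexed vertices along $P$), and define
\begin{align*}
    G' = \bigl(V(G),\; E(P) \cup E(G[V_L]) \cup E(G[V_H])\bigr),
\end{align*}
i.e. $G'$ consists of $P$ together with all non-path edges of $G$ whose endpoints lie in the same bipartition class. Given any proper $3$-coloring $c^* \colon V(G) \to \{1, 2, 3\}$ of $G'$, I would build the backbone coloring by setting $c(v) = c^*(v)$ for $v \in V_L$ and $c(v) = \lambda + 5 - c^*(v)$ for $v \in V_H$, so that $c$ uses only colors from $\{1, 2, 3\} \cup \{\lambda+2, \lambda+3, \lambda+4\} \subseteq \{1, \ldots, \lambda + 4\}$.

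I would then verify that $c$ is a valid $\lambda$-backbone $(\lambda+4)$-coloring. Any edge of $G$ with endpoints in different bipartition classes is properly colored automatically, because the two palettes $\{1, 2, 3\}$ and $\{\lambda+2, \lambda+3, \lambda+4\}$ are disjoint for every $\lambda \ge 2$. Any edge with both endpoints in $V_L$ or both in $V_H$ belongs to $G'$, so properness follows from properness of $c^*$. For every backbone edge $(v_L, v_H) \in E(P) \subseteq E(G')$ we have $c^*(v_L) \ne c^*(v_H)$, hence $c^*(v_L) + c^*(v_H) \le 2 + 3 = 5$, giving
\begin{align*}
    |c(v_H) - c(v_L)| = \lambda + 5 - c^*(v_L) - c^*(v_H) \ge \lambda,
\end{align*}
as required.

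The heart of the argument will be to show $\chi(G') \le 3$. My plan is to embed $G'$ into a supergraph $H$ satisfying the hypotheses of Fleischner's theorem: $H$ should be $4$-regular on $3k$ vertices for some integer $k$ (padding $V(G)$ with at most two dummy vertices to make $|V(H)|$ divisible by $3$), and its edge set should decompose into a Hamilton circuit (obtained by closing $P$ into a cycle and threading through any dummies) together with $k$ pairwise vertex-disjoint triangles chosen so that every within-class non-path edge of $G$ appears inside some triangle of $H$. Once $H$ is built, \Cref{lem:3-split} yields $\chi(H) = 3$, so $\chi(G') \le \chi(H) \le 3$. Algorithmically, a proper $3$-coloring of $G'$ can then be found in $\mathcal{O}^{*}(1.3217^n)$ time by a standard exact $3$-coloring routine, and $c$ is assembled from it in linear time.

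The main obstacle will be the explicit construction of $H$. For each vertex $v$, its triangle in $H$ must use $v$'s within-class neighbours in $G$ (supplemented by dummies when $v$ has fewer than two) as the other triangle members, and these local forced choices must assemble consistently into a global vertex-disjoint partition of $V(H)$. Within-class components of $G[V_L] \cup G[V_H]$ that are triangles, single edges, paths of length $2$, or isolated vertices extend immediately, but longer paths and cycles of length $\ge 4$ are genuinely awkward: an interior vertex of such a component would want to belong to two different triangles simultaneously. Breaking these components into triangle-compatible fragments via additional dummy vertices and auxiliary edges, while simultaneously preserving $4$-regularity and the divisibility $|V(H)| \equiv 0 \pmod 3$, will require a careful case analysis, and is the principal piece of technical work.
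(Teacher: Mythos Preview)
Your reduction to a proper $3$-colouring of $\tilde G := P \cup G[V_L] \cup G[V_H]$ and the subsequent lifting to a backbone colouring are both correct. The gap is in the plan to establish $\chi(\tilde G)\le 3$ by embedding $\tilde G$ into a Fleischner-type supergraph $H$: such an $H$ need not exist, and the dummy-vertex patching you sketch cannot rescue it.

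Take a within-class path $u_1u_2u_3u_4$ in $G[V_L]$ with all $u_j$ internal on $P$ (for instance $u_j=v_{2j+1}$ on a long enough $P$, with extra edges $v_3v_5,\,v_5v_7,\,v_7v_9$; every degree stays $\le 4$). In any $4$-regular $H\supseteq\tilde G$ whose edges decompose into a Hamiltonian cycle extending $P$ and a vertex-disjoint triangle factor, the two Hamilton-cycle edges at the internal vertex $u_2$ are already its two $P$-edges, so its two triangle edges must be $u_1u_2$ and $u_2u_3$; this forces the triangle $\{u_1,u_2,u_3\}$. But then $u_3$'s triangle is fixed as well, and the edge $u_3u_4\in E(\tilde G)$ is neither a Hamilton-cycle edge of $H$ (those are $u_3$'s $P$-neighbours) nor a triangle edge of $H$, so $\tilde G\not\subseteq H$. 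The obstruction is entirely local to $u_2,u_3$ and involves only edges already present in $\tilde G$; inserting dummy vertices elsewhere, or adding auxiliary edges, does nothing to remove it. The same argument rules out any within-class cycle of length $\ne 3$. You identified exactly this difficulty, but the proposed fix cannot work.

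For comparison, the paper also relies on \Cref{lem:3-split} and the same $A/B$ palette split, but it deploys Fleischner's theorem far more indirectly. It never tries to cover the non-path edges by triangles. Instead it observes that $G\setminus P$ has maximum degree~$2$ (after a small adjustment at the endpoints of $P$), hence is a disjoint union of paths and cycles; from each \emph{odd} cycle it selects three vertices and builds the auxiliary Fleischner graph $G^*$ on just those selected vertices, with one synthetic triangle per odd cycle and a Hamilton cycle inheriting the relevant $P$-adjacencies. The colour-$3$ class of the resulting $3$-colouring of $G^*$ is then a $P$-independent set that meets every odd cycle of $G\setminus P$; after giving it the ``middle'' colours $3$ and $\lambda+2$, what remains of $G\setminus P$ is bipartite and is $2$-coloured with $\{1,2\}$ on $A$ and $\{\lambda+3,\lambda+4\}$ on $B$. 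In short, Fleischner is used to manufacture a well-structured odd-cycle transversal, not to $3$-colour $\tilde G$ outright.
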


\begin{proof}
    First, let us number the vertices $v_1$, \ldots, $v_n$ according to their order of appearance in $P$. Let also $A$ and $B$ denote the sets of vertices with odd and even indices, respectively. Our whole idea is to find a coloring $c$ of $G$ such that the vertices from $A$ get colors $\{1, 2, 3\}$ and the vertices from $B$ get $\{\lambda + 2, \lambda + 3, \lambda + 4\}$ so that all the backbone edge constraints are satisfied. In doing so we just need to ensure that $C_3 \cup C_{\lambda + 2}$ (recall from the previous section that $C_i$ is a set of vertices colored with $i$) is an independent set in $P$ and that non-backbone edges in $G \setminus (C_3 \cup C_{\lambda + 2})$ form a bipartite graph.

    Let us see first that clearly the theorem holds for $n = 2$.
    When $n \ge 3$, then we have three possible cases: either $(a)$ $v_1$ and $v_n$ are adjacent in $G \setminus P$, or $(b)$ they are not adjacent, but they are in the same connected component of $G \setminus P$ or $(c)$ they belong to different connected components of $G \setminus P$. We define a graph $P'$ and a set of vertices $X$ separately for each case:
    \begin{description}
        \item[case $(a)$:] let us denote by $P'$ a Hamiltonian cycle consisting of $P$ together with an edge $\{v_1, v_n\}$ and let $X = \emptyset$.
        Note that the resulting coloring would not take into account this additional edge as a backbone one -- it is just needed to map easier the problem into a known one.
        \item[case $(b)$:] let $X$ be a set of internal vertices (that is, excluding the endpoints $v_1$ and $v_n$) on any path between $v_1$ and $v_n$ in $G \setminus P$. Let also $P' = P \setminus X$.
        \item[case $(c)$:] observe that $\deg_{G \setminus P}(v_1) \le 3$, $\deg_{G \setminus P}(v_n) \le 3$, and $\deg_{G \setminus P}(u) \le 2$ for all other vertices $u$.
        Thus, the connected components that include either $v_1$ or $v_n$ in $G \setminus P$ consist either of at most $3$ paths meeting at these vertices, or of a cycle going through it and at most one path ending at these vertices. Let us set $X$ to be a set of vertices (other than $v_1$ and $v_n$) on these single paths in $G \setminus P$ (if they exist) starting from $v_1$ and $v_n$. Again, let $P' = P \setminus X$.
    \end{description}
    In any case, we know that $P'$ is either a Hamiltonian cycle or a spanning collection of paths in $G' = G \setminus X$. Moreover, by construction, $\deg_{G' \setminus P'}(v_1) \le 2$, $\deg_{G' \setminus P'}(v_n) \le 2$, and $\deg_{G \setminus P}(u) \le \deg_{G' \setminus P'}(u) \le 2$ for any other vertex $u$ so $\Delta(G' \setminus P') \le 2$.

    Thus, $G' \setminus P'$ is a collection of paths and cycles.
    Now, for every odd cycle $O_i$ in $G' \setminus P'$ let us pick arbitrary three vertices $u_{i, 1}$, $u_{i, 2}$, $u_{i, 3}$ with only condition that if $O_i$ contains either $v_1$ or $v_n$, then they have to be among those vertices.

    Let now $P^*$ be a Hamiltonian cycle on all vertices $u_{i, j}$ such that it contains all edges preserved from $P'$ plus possibly some additional ones to complete the cycle -- a valid construction since $P'$ is either a Hamiltonian cycle or a collection of paths.
    Let also $G^*$ be a graph with $V(G^*) = V(P^*)$ and $E(G^*) = E(P^*) \cup \{\{u_{i, 1}, u_{i, 2}\}, \{u_{i, 1}, u_{i, 3}\}, \{u_{i, 2}, u_{i, 3}\}\colon \text{$O_i$ is an odd cycle in $G' \setminus P'$}\}$.

    This way, $G^*$ is a $4$-regular graph consisting only of a Hamiltonian cycle $P^*$ and triangles, so we can apply \Cref{lem:3-split} to find its $3$-coloring $c^*$ with $|C^*_1| = |C^*_2| = |C^*_3|$.
    Without loss of generality we can assume that if $v_1 \in V(G^*)$, then $c^*(v_1) = 1$. Otherwise, that is if $v_1 \notin V(G^*)$, we pick a coloring such that both $c^*(v_2) \neq 1$ and $c^*(v_n) \neq 1$ if either of these vertices belongs to $V(G^*)$.

    Let us set $c(v) = 3$ for all $v \in C^*_3 \cap A$ and $c(v) = \lambda + 2$ for all $v \in C^*_3 \cap B$.
    If $v_1 \notin V(G^*)$, then we additionally set $c(v_1) = 3$ (since $v_1 \in A$) -- thus, in either case we are certain that $c(v_1) = 3$.

    Now we want to show how to assign colors to $G'' = G' \setminus (C^*_3 \cup \{v_1\})$. First, we note that $G''$ induces backbone $P'' = P'[V(G'')] = P[V(G'')]$ in a way such that $G'' \setminus P''$ is bipartite.
    To prove it is sufficient to recall that $G' \setminus P'$ was a collection of paths and cycles, and that $C^*_3$ contained exactly one vertex from each triangle in $G^* \setminus P^*$ -- thus exactly one vertex from each odd cycle from $G' \setminus P'$. Since $C^*_3$ hits every odd cycle in $G' \setminus P'$, removing it leaves a bipartite graph, and a possible further removal of $v_1$ does not change this property.

    Since $G'' \setminus P''$ is bipartite, we can find its $2$-coloring $c''$ and then assign $c(v) = c''(v)$ for all $v \in V(G'') \cap A$ and $c(v) = \lambda + 5 - c''(v)$ for all $v \in V(G'') \cap B$. This way we obtain a coloring of $G'$ such that:
    \begin{itemize}
        \item all backbone edge conditions from $P'$ are met by the virtue of appropriate assignment to $A$ and $B$ sets plus the fact that there is no backbone edge between vertices from $C^*_3 \cup \{v_1\}$ (colored with $3$ and $\lambda + 2$),
        \item all non-backbone edge conditions within $G''$ hold since both $c''$ was a valid coloring,
        \item all non-backbone edge conditions between vertices from $V(G'')$ and $C^*_3 \cup \{v_1\}$ hold because both sets of vertices get disjoint sets of colors: $\{1, 2, \lambda + 3, \lambda + 4\}$ and $\{3, \lambda + 2\}$.
    \end{itemize} 

    Finally, we are left with the only uncolored vertices in $X$. In case $(a)$ it is trivial since $X$ is empty.

    If $X \cup \{v_1, v_n\}$ induces in $G \setminus P$ a single path between $v_1$ and $v_n$ (case $(b)$), then we proceed sequentially along the path starting from $v_n$ and note that for every $u \in X$ its membership in $A$ or $B$ ensures that we pick either color from the set $\{1, 2\}$ or $\{\lambda + 3, \lambda + 4\}$ so that both backbone edge conditions are met. Moreover, since $u$ is adjacent in $G \setminus P$ to at most one already colored vertex in $X \cup \{v_n\}$, then there is always a color satisfying a non-backbone edge to the previous vertex along the path. And clearly the last vertex on this path is also adjacent to $v_1$, but since $c(v_1) = 3$ this condition is met as well.

    Otherwise $X$ was determined in the case $(c)$ above, and we know that $X \cup \{v_1, v_n\}$ induces at most two paths in $G \setminus P$, one starting from $v_1$, one starting from $v_n$. We can basically repeat the procedure above twice, thus again obtaining a valid backbone coloring for the whole graph.

    In order to find the algorithm recovering such a $\lambda$-backbone $(\lambda + 4)$-coloring for given graphs $G$ and $P$ we observe that the above proof is constructive and works in the linear time -- except the usage of \Cref{lem:3-split}, assumed to be a black box. Unfortunately, in the original paper \cite{fleischner1992solution} it is proved non-constructively via famous Combinatorial Nullstellensatz, established by Alon and Tarsi \cite{alon1992colorings} (see also \cite{alon1999combinatorial} for an overview of this technique).
    To this day, there is known no algorithm for $3$-coloring a graph $G^*$ consisting of Hamiltonian cycle and disjoint triangles in polynomial time. Interestingly, even if we turn our attention to a broader class of all graphs with $\Delta(G^*) \le 4$, we still do not know how to find a $3$-coloring faster than in the general case, that is in $\mathcal{O}^\text{*}\!(1.3217^n)$ time \cite{meijer2023}.
\end{proof}

\subsection{The hard case}

In order to show the hardness of $\lambda$-backbone $(\lambda + 3)$-coloring of graphs with Hamiltonian path backbones we need to introduce several auxiliary graphs necessary for reductions. 
First, for some $t \ge 2$ let us define graphs $P$, $C$, $M$, $G$ with $V(P) = V(C) = V(M) = V(G) = \{v_1, \ldots, v_{8 t}\}$ and:
\begin{itemize}
    \item $E(P) = \{\{v_i, v_{i+1}\}\colon i = 1, \ldots, 8 t - 1\}$,
    \item $E(C) = \{\{v_{4 i - 1}, v_{4 i + 3}\}\colon i = 1, \ldots, 2 t - 1\} \cup \{(v_{8 t - 1}, v_3)\}$,
    \item $E(M) = \{\{v_{4 i - 2}, v_{4 i}\}\colon i = 1, \ldots, 2 t\}$,
    \item $E(G) = E(P) \cup E(C) \cup E(M)$
\end{itemize}
Let us also define the graph $G$ with backbone $P$ as \emph{PCM-gadget}. An example of such gadget is presented in \Cref{fig:gadgetPCM}.

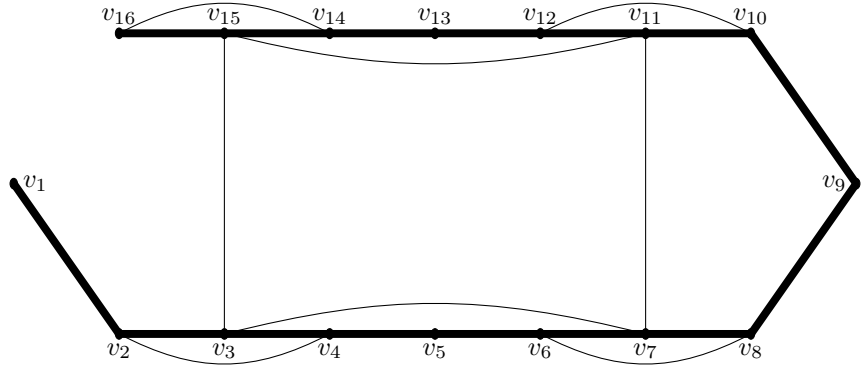
\begin{figure}[H]
  \centering
  \begin{tikzpicture}[xscale=0.7]
      \foreach \val [count=\i from 10] in {0, 2, 4, 6, 8, 10, 12} {
          \coordinate[label=90:$v_{\i}$] (v\i) at (14 - \val, 4);
          \filldraw (v\i) circle (2pt);
      }
      \foreach \i \val \angle in {1/0/0, 9/16/180} {
          \coordinate[label=\angle:$v_{\i}$] (v\i) at (\val, 2);
          \filldraw (v\i) circle (2pt);
      }
      \foreach \val [count=\i from 2] in {2, 4, 6, 8, 10, 12, 14} {
          \coordinate[label=270:$v_{\i}$] (v\i) at (\val, 0);
          \filldraw (v\i) circle (2pt);
      }
      \foreach \from \to in {1/2, 2/3, 3/4, 4/5, 5/6, 6/7, 7/8} {
          \draw (v\from) -- (v\to) [line width=3pt];
      }
      \draw (v8) -- (v9) [line width=3pt];
      \foreach \from \to in {9/10, 10/11, 11/12, 12/13, 13/14, 14/15, 15/16} {
          \draw (v\from) -- (v\to) [line width=3pt];
      }
      \foreach \u \v in {2/4, 6/8} {
          \draw (v\u) to[out=340, in=200] (v\v);
      }
      \foreach \u \v in {10/12, 14/16} {
          \draw (v\u) to[out=160, in=20] (v\v);
      }
      \foreach \u \v in {3/7} {
          \draw (v\u) to[out=10, in=170] (v\v);
      }
      \draw (v7) to (v11);
      \foreach \u \v in {11/15} {
          \draw (v\u) to[out=190, in=350] (v\v);
      }
      \draw (v15) to (v3);
  \end{tikzpicture}
  \caption{PCM-gadget for $t = 2$ (on $8 t = 16$ vertices) with backbone edges in bold.}
  \label{fig:gadgetPCM}
\end{figure}

Throughout this section we will refer to $v_1$ and $v_{8 t}$ as the \emph{beginning} and the \emph{end} of PCM-gadget, respectively.

\begin{lemma}
    \label{lem:variable_gadget}
    Every $\lambda$-backbone $(\lambda + 3)$-coloring of PCM-gadget such that $c(v_1) \in \{1, 2, 3\}$ has the following properties:
    \begin{enumerate}
        \item all $v_{8 i - 5}$ ($i = 1, 2, \ldots, t$) have the same color $a$,
        \item all $v_{8 i - 1}$ ($i = 1, 2, \ldots, t$)  have the same color $b$,
        \item $\{a, b\} = \{1, 2\}$.
    \end{enumerate}
\end{lemma}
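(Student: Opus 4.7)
The plan is to exploit, in succession, three structural constraints on any such coloring: the tight color budget on the Hamiltonian backbone $P$, the matching edges of $M$, and the parity of the cycle $C$. I work throughout under the assumption $\lambda \ge 3$, matching the setting of \Cref{thm:path-lambda+3} where the lemma will be applied.

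First, I would establish a bipartite split of the palette. With only $\lambda+3$ colors available and backbone gap at least $\lambda$ on every path edge, each color in $L = \{1, 2, 3\}$ forces its path neighbor into $H = \{\lambda+1, \lambda+2, \lambda+3\}$ and vice versa, whereas any ``middle'' color in $\{4, \ldots, \lambda\}$ (nonempty only when $\lambda \ge 4$) admits no legal path neighbor at all within $\{1, \ldots, \lambda+3\}$. Since every PCM-vertex has at least one path neighbor and since $L \cap H = \emptyset$ (as $\lambda + 1 > 3$), no vertex receives a middle color. Starting from $c(v_1) \in L$, induction along $P$ then forces $c(v_k) \in L$ for odd $k$ and $c(v_k) \in H$ for even $k$.

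Next, for each $i \in \{1, \ldots, 2t\}$ I would zoom in on the triple $v_{4i-2}, v_{4i-1}, v_{4i}$. By the previous step, the outer vertices lie in $H$, and they carry distinct colors because $\{v_{4i-2}, v_{4i}\} \in E(M)$, while the middle vertex $v_{4i-1} \in L$ is path-adjacent to both. The two gap requirements $|c(v_{4i-1}) - c(v_{4i-2})| \ge \lambda$ and $|c(v_{4i-1}) - c(v_{4i})| \ge \lambda$, combined with the distinctness of the high endpoints, rule out $c(v_{4i-1}) = 3$ -- which would demand both neighbors equal to $\lambda + 3$ -- and hence force $c(v_{4i-1}) \in \{1, 2\}$.

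Finally, I would invoke the cycle $C$ on the vertices $v_3, v_7, v_{11}, \ldots, v_{8t-1}$ of even length $2t$. These are exactly the vertices $v_{4k-1}$ for $k = 1, \ldots, 2t$, which by the previous step all carry colors from $\{1, 2\}$; moreover, each $C$-edge forces adjacent cycle vertices to differ. The unique proper $2$-coloring of an even cycle with the palette $\{1, 2\}$ alternates strictly between the two positions. Since $v_{8i-5}$ sits at cycle position $2i - 1$ and $v_{8i-1}$ at cycle position $2i$, I conclude that all $v_{8i-5}$ share one color $a \in \{1, 2\}$, all $v_{8i-1}$ share the complementary color $b$, and therefore $\{a, b\} = \{1, 2\}$. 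The only mildly delicate point I anticipate is the middle step, where the matching edge (rather than just backbone adjacency) is essential to exclude color $3$ on the $v_{4i-1}$; the bipartite split and the parity argument on $C$ are then routine bookkeeping.
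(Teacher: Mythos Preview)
Your proposal is correct and follows essentially the same three-step route as the paper: the backbone path forces the odd/even bipartition into $\{1,2,3\}$ and $\{\lambda+1,\lambda+2,\lambda+3\}$, the matching edge $\{v_{4i-2},v_{4i}\}$ rules out color $3$ on each $v_{4i-1}$, and the even cycle $C$ then forces the alternation of $1$ and $2$. Your treatment is in fact slightly more careful than the paper's, since you explicitly exclude the middle colors $\{4,\ldots,\lambda\}$ and make the assumption $\lambda\ge 3$ (needed for $L\cap H=\emptyset$) explicit, both of which the paper leaves implicit.
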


\begin{proof}
    Assume that there exists a $\lambda$-backbone $(\lambda + 3)$-coloring of PCM-gadget $G$ with backbone $P$ such that both $c(v_1) \in \{1, 2, 3\}$ and $c(v_{4 j - 1}) \notin \{1, 2\}$ for some $j \in \{1, \ldots, 2 t\}$.

    The property $c(v_1) \in \{1, 2, 3\}$ together with backbone $P$ ensures that we have $c(v_{2 i - 1}) \in \{1, 2, 3\}$ and $c(v_{2 i}) \in \{\lambda + 1, \lambda + 2, \lambda + 3\}$ for all $i = 1, 2, \ldots, 4 t$. Therefore, it has to be the case that $c(v_{4 j - 1}) = 3$.

    However, this implies that $c(v_{4 j - 2}) = c(v_{4 j}) = \lambda + 3$ despite the fact that $\{v_{4 j - 2}, v_{4 j}\} \in E(G)$ -- a contradiction. Thus, $c(v_{4 j - 1}) \in \{1, 2\}$ for all $j = 1, \ldots, 2 n$.

    To finish the proof it is sufficient to see that vertices $v_{4 i - 1}$ ($i = 1, 2, \ldots, 2t$) induce an even cycle in PCM-gadget (i.e. $C$ itself). This guarantees the equality of colors of all vertices within both sets, $\{v_{8 i - 5}\colon i = 1, \ldots, t\}$ and $\{v_{8 i - 1}\colon i = 1, \ldots, t\}$.
\end{proof}

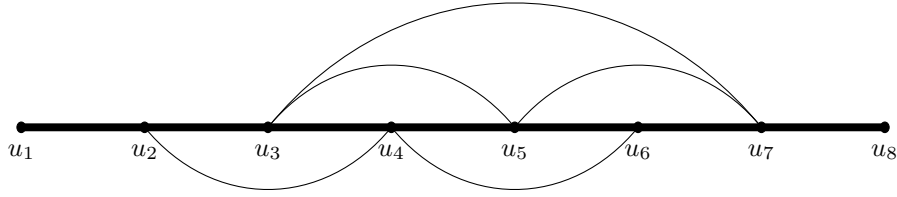
\begin{figure}[H]
\centering
\begin{tikzpicture}[xscale=0.82]
    \foreach \val [count=\i] in {0, 2, 4, 6, 8, 10, 12, 14} {
        \coordinate[label={[label distance=1mm]270:$u_{\i}$}] (v\i) at (\val, 0);
        \filldraw (v\i) circle (2pt);
    }
    \foreach \from \to in {1/2, 2/3, 3/4, 4/5, 5/6, 6/7, 7/8} {
        \draw (v\from) -- (v\to) [line width=3pt];
    }
    \foreach \from \to in {3/5, 5/7, 3/7} {
        \draw (v\from) to[out=45, in=135] (v\to);
    }
    \foreach \from \to in {2/4, 4/6} {
        \draw (v\from) to[out=315, in=225] (v\to);
    }
\end{tikzpicture}
\caption{Force gadget with backbone edges in bold.}
\label{fig:gadgetF}
\end{figure}

Let \emph{force gadget} be a graph $G$ with backbone $P$ (presented in \Cref{fig:gadgetF}) such that $V(G) = V(P) = \{u_1, \ldots, u_8\}$, $E(P) = \{\{u_i u_{i + 1}\colon i = 1, 2, \ldots, 7\}$, $E(G) = E(P) \cup \{\{u_2, v_4\}, \{u_4, u_6\}, \{u_3, u_5\}, \{u_3, u_7\}, \{u_5, u_7\}\}$.
Similarly as before, we will refer to $u_1$ and $u_8$ as the \emph{beginning} and the \emph{end} of a force gadget, respectively.

\begin{lemma}
    \label{lem:force_gadget}
    Every $\lambda$-backbone $(\lambda + 3)$-coloring $c$ of a force gadget such that $c(u_1) \in \{1, 2, 3\}$ has $c(u_8) = \lambda + 3$.
\end{lemma}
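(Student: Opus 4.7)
The plan is to first pin down the rigid structure that the hypothesis $c(u_1) \in \{1, 2, 3\}$ imposes along the entire backbone path, and then use the non-backbone edges of the force gadget to force the color $3$ onto $u_7$ specifically. The conclusion about $c(u_8)$ will then follow immediately from the backbone edge $\{u_7, u_8\}$.

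First I would observe that since $c(u_1) \in \{1, 2, 3\}$ and every consecutive pair on the backbone path $u_1 u_2 \cdots u_8$ must differ by at least $\lambda$ while all colors lie in $\{1, \ldots, \lambda + 3\}$, an easy induction along the path yields the alternating pattern $c(u_i) \in \{1, 2, 3\}$ for odd $i$ and $c(u_i) \in \{\lambda + 1, \lambda + 2, \lambda + 3\}$ for even $i$. Next, I would exploit the triangle on $\{u_3, u_5, u_7\}$ given by the non-backbone edges $\{u_3, u_5\}, \{u_3, u_7\}, \{u_5, u_7\}$: these three vertices carry colors in $\{1, 2, 3\}$ and are pairwise adjacent, so $\{c(u_3), c(u_5), c(u_7)\} = \{1, 2, 3\}$.

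Now I would rule out the two cases in which the color $3$ sits somewhere other than $u_7$. If $c(u_3) = 3$, then the backbone edges $\{u_2, u_3\}$ and $\{u_3, u_4\}$ both force their high endpoint to be at least $3 + \lambda$, so $c(u_2) = c(u_4) = \lambda + 3$, contradicting the non-backbone edge $\{u_2, u_4\}$. Symmetrically, if $c(u_5) = 3$, then $\{u_4, u_5\}$ and $\{u_5, u_6\}$ force $c(u_4) = c(u_6) = \lambda + 3$, contradicting $\{u_4, u_6\}$. Hence $c(u_7) = 3$, and then the backbone edge $\{u_7, u_8\}$ together with $c(u_8) \le \lambda + 3$ pins down $c(u_8) = \lambda + 3$.

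The argument is essentially routine after the alternation observation; there is no real obstacle. The only point worth double-checking is that the three edges inside $\{u_3, u_5, u_7\}$ and the two edges $\{u_2, u_4\}$, $\{u_4, u_6\}$ are all non-backbone (so the relevant constraint on them is merely distinctness, not a gap of $\lambda$), which is immediate from the definition of the force gadget and the fact that only the path edges $\{u_i, u_{i+1}\}$ lie in $P$.
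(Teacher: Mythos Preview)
Your proof is correct and follows essentially the same approach as the paper: establish the odd/even alternation along the backbone path, use the non-backbone edges $\{u_2,u_4\}$ and $\{u_4,u_6\}$ to exclude $c(u_3)=3$ and $c(u_5)=3$, and then the triangle on $\{u_3,u_5,u_7\}$ forces $c(u_7)=3$, whence $c(u_8)=\lambda+3$. The only cosmetic difference is the order in which you invoke the triangle versus the exclusion of color $3$ on $u_3$ and $u_5$.
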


\begin{proof}
    First, we notice that $c(u_1) \in \{1, 2, 3\}$ ensures that the odd vertices on $P$ get colors $\{1, 2, 3\}$ and the even ones $\{\lambda + 1, \lambda + 2, \lambda + 3\}$.

    Next, it cannot be the case that $c(u_3) = 3$, because this would force $c(u_2) = c(u_4) = \lambda + 3$ -- impossible since $\{\{u_2, u_4\} \in E(G)$. By the same reasoning it holds that $c(u_5) \neq 3$.
    However, since $\{u_3, u_5, u_7\}$ form a triangle in $G$, it follows that $c(u_7) = 3$, and therefore $u_8$ as its neighbor in $P$ must have been assigned color $\lambda + 3$ in $c$.
\end{proof}

\begin{theorem}
    \label{thm:path-lambda+3}
    Let $G$ be a graph with $\Delta(G) \le 4$ and $P$ be a Hamiltonian path in $G$. Then for every $\lambda \ge 3$ deciding whether $BBC_\lambda(G, P) \le \lambda + 3$ is $\NP$-complete.
\end{theorem}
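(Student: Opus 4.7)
The plan is to reduce from the variant of \texttt{NAE-3-SAT} in which all literals appear without negation, which is $\NP$-complete by \cite{schaefer1978complexity}. Given such a formula $\phi$ with variables $x_1, \ldots, x_n$ and clauses $C_1, \ldots, C_m$, I would construct a graph $G$ with a Hamiltonian path backbone $P$ and $\Delta(G) \le 4$ such that $BBC_\lambda(G, P) \le \lambda + 3$ if and only if $\phi$ admits a NAE-satisfying assignment; membership in $\NP$ is immediate.

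For each variable $x_i$ appearing $k_i$ times I would build a \emph{variable gadget} by chaining PCM-gadgets end-to-end, inserting a force gadget and a short path connector between consecutive PCM-gadgets. By \Cref{lem:variable_gadget}, once $c(v_1) \in \{1, 2, 3\}$ at the start of a PCM-gadget all its slot vertices $v_{8j-5}$ share a common color $a \in \{1, 2\}$ and the slots $v_{8j-1}$ share the other value $b \in \{1, 2\} \setminus \{a\}$; the resulting binary choice encodes the truth value of $x_i$, and \Cref{lem:force_gadget} together with the low/high parity pattern forced along $P$ ensures that the same choice propagates through the whole chain. For each clause $C_j = (x_{i_1} \vee x_{i_2} \vee x_{i_3})$ I would then build a \emph{clause gadget}: a short segment of $P$ equipped with three checker vertices, each joined by a single non-backbone edge to a designated slot vertex in one of the three variable gadgets. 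The clause gadget is engineered with an internal clique structure -- together with spliced-in force gadgets that pin its boundary colors to $\lambda + 3$ -- so that it admits a $\lambda$-backbone $(\lambda + 3)$-coloring if and only if the three external slot colors are not all equal in $\{1, 2\}$, which is precisely the NAE condition on the three literals of $C_j$.

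Correctness then follows in the usual two-way manner: a NAE-satisfying assignment dictates the state of every variable gadget through \Cref{lem:variable_gadget} and allows completion of every clause gadget since each clause contains at least one true and at least one false literal, while conversely any valid $\lambda$-backbone $(\lambda + 3)$-coloring induces a unique binary state per variable gadget (by the same lemma) and, via \Cref{lem:force_gadget}, rules out clauses with all three literals agreeing. The main technical obstacle is the simultaneous requirement that $P$ be a Hamiltonian path and that $\Delta(G) \le 4$ throughout: the slot vertices $v_{4i-1}$ of each PCM-gadget already have degree four, so I would locally delete one edge of the cycle $C$ incident to each slot actually used as a clause connector -- the resulting even path (instead of an even cycle) among the $v_{4i-1}$ still forces the same two-coloring, so \Cref{lem:variable_gadget} survives with only a minor modification of its proof, and the deleted cycle edge frees the one unit of degree needed to attach the clause edge. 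Interleaving all variable and clause gadgets into a single Hamiltonian path while performing these local modifications and respecting all degree bounds is the delicate bookkeeping part of the construction.
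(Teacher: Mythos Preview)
Your reduction has a genuine gap in the variable-consistency mechanism. You propose to chain several PCM-gadgets per variable and claim that ``\Cref{lem:force_gadget} together with the low/high parity pattern forced along $P$ ensures that the same choice propagates through the whole chain''. But \Cref{lem:force_gadget} only pins $c(u_8)=\lambda+3$; it carries no information about whether the preceding PCM-gadget chose $a=1$ or $a=2$. The next PCM-gadget merely learns that its $v_1\in\{1,2,3\}$, which is the hypothesis of \Cref{lem:variable_gadget}, not a constraint on its binary state. So nothing prevents different copies from making opposite choices, and the encoding of a variable collapses. The alternative --- a single PCM-gadget with several slots and one cycle edge deleted per used slot --- fails for the same reason: after more than one deletion the cycle $C$ splits into several components and the alternation in $\{1,2\}$ no longer links them, so the slots need not agree.

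The paper avoids both problems by a different design. It reduces from \texttt{3-SAT} (not \texttt{NAE-3-SAT}), uses a \emph{single} PCM-gadget of size $8t_i$ per variable, and connects clauses not to the odd ``slot'' vertices $v_{4i-1}$ but to the \emph{even} vertices $v_{4i-2},v_{4i}$, which have degree only $3$ in the gadget and hence have room for one more edge. The clause test is then a $7$-cycle through six such even vertices and the $u_8$ of a dedicated force gadget: since $c(u_8)=\lambda+3$ and the cycle is odd, some $w$ must receive $\lambda+1$, forcing the adjacent odd vertex to colour $1$ and hence, via \Cref{lem:variable_gadget}, the corresponding literal to be true. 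No cycle edges are deleted, \Cref{lem:variable_gadget} is used as stated, and the degree bound holds by inspection. Your instinct to route through the low-coloured vertices is natural, but it is precisely what forces you into the edge-deletion and propagation hacks that do not close; the switch to the high side is the missing idea.
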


\begin{proof}
    We proceed with a reduction from \texttt{3-SAT}.
    Given a formula with $m$ clauses $C_1$, \ldots, $C_m$ on $n$ variables $x_1$, \ldots, $x_n$ such that each variable appears $t_i$ times in the whole formula. Without loss of generality we assume that each variable appears at least once as a non-negated literal, as we always can swap the appropriate literals, preserving the satisfiability. We also assume that each clause has exactly $3$ variables and no variable appears twice in the same clause -- achieved easily by removing $(x \lor \lnot x \lor y)$ clauses and replacing $(x \lor x \lor y)$ clauses with $(x \lor y \lor z) \land (x \lor y \lor \lnot z)$ with a new dummy variable $z$.
    
    First, we build $n$ variable gadgets such that $i$-th one has exactly $8 t_i$ vertices.
    We also build $m$ force gadgets. Then, we connect ends and beginnings of gadgets with backbone edges to obtain one long Hamiltonian backbone path $P$ in such a resulting graph.

    Finally, let us add edges representing clauses in the following way (see example in \Cref{fig:GK3620}): suppose that $C_i$ contains variables $x_j$, $x_k$, $x_l$ (possibly negated) and this is their $s_j$-th, $s_k$-th, and $s_l$-th appearance in the whole formula, respectively. Clearly, $s_j \in \{1, \ldots, m\}$ etc.
    Now choose vertices $w^j_1 = v^j_{8 s_j - 2}$, $w^j_2 = v^j_{8 s_j}$ (with $j$ superscript denoting $j$-th variable gadget) if $x_j$ is negated in $C_i$, otherwise $w^j_1 = v^j_{8 s_j - 6}$, $w^j_2 = v^j_{8 s_j - 4}$, and analogously for other variables in $C_i$, and we add the following four edges to our graph $G$: $\{u^i_8, w^j_1\}$, $\{w^j_2, w^k_1\}$, $\{w^k_2, w^l_1\}$, and $\{w^l_2, u^i_8\}$ (here $u^i_8$ is the end of $i$-th force gadget).
    Observe that regardless of the choice of $w$'s, adding these edges causes a cycle on exactly these seven vertices to appear in $G \setminus P$ since $\{w^j_1, w^j_2\}, \{w^k_1, w^k_2\}, \{w^l_1, w^l_2\} \in E(G)$ -- in particular, they are the edges from $E(M)$.

\begin{figure}[htpb]
    \centering
    \begin{tikzpicture}[scale=0.55]

        \foreach \val [count=\i] in {0, 2, 4, 6, 8, 10, 12, 14} {
            \coordinate[label={[label distance=1mm]270:$u_{\i}$}] (p\i) at (-3 + \val, 4);
            \filldraw (p\i) circle (2pt);
        }
        \foreach \from \to in {1/2, 2/3, 3/4, 4/5, 5/6, 6/7, 7/8} {
            \draw (p\from) -- (p\to) [line width=3pt];
        }
        \foreach \from \to in {3/5, 5/7, 3/7} {
            \draw (p\from) to[out=45, in=135] (p\to);
        }
        \foreach \from \to in {2/4, 4/6} {
            \draw (p\from) to[out=315, in=225] (p\to);
        }
        
        \foreach \val [count=\i from 10] in {0, 1, 2, 3, 4, 5, 6} {
            \coordinate[label={[label distance=1mm]270:$v_{\i}$}] (v\i) at (3 - \val, 1);
            \filldraw (v\i) circle (2pt);

            \coordinate[label=270:$v_{\i}'$] (u\i) at (6 - \val, -3);
            \filldraw (u\i) circle (2pt);

            \coordinate[label=270:$v_{\i}''$] (w\i) at (15 - \val, 1);
            \filldraw (w\i) circle (2pt);
        }

        \foreach \i \val in {1/-2, 9/6} {
            \coordinate[label={[label distance=1mm]270:$v_{\i}$}] (v\i) at (-2 + \val, 0);
            \filldraw (v\i) circle (2pt);

            \coordinate[label={[label distance=0.5mm]270:$v_{\i}'$}] (u\i) at (1 + \val, -4);
            \filldraw (u\i) circle (2pt);

            \coordinate[label={[label distance=0.5mm]270:$v_{\i}''$}] (w\i) at (10 + \val, 0);
            \filldraw (w\i) circle (2pt);
        }

        \foreach \val [count=\i from 2] in {0, 1, 2, 3, 4, 5, 6} {
            \coordinate[label={[label distance=1mm]270:$v_{\i}$}] (v\i) at (-3 + \val, -1);
            \filldraw (v\i) circle (2pt);

            \coordinate[label=270:$v_{\i}'$] (u\i) at (0 + \val, -5);
            \filldraw (u\i) circle (2pt);

            \coordinate[label=270:$v_{\i}''$] (w\i) at (9 + \val, -1);
            \filldraw (w\i) circle (2pt);
        }

        \foreach \from \to in {1/2, 2/3, 3/4, 4/5, 5/6, 6/7, 7/8} {
            \draw (v\from) -- (v\to) [line width=3pt];
            \draw (u\from) -- (u\to) [line width=3pt];
            \draw (w\from) -- (w\to) [line width=3pt];
        }

        \draw (v8) -- (v9) [line width=3pt];
        \draw (u8) -- (u9) [line width=3pt];
        \draw (w8) -- (w9) [line width=3pt];

        \foreach \from \to in {9/10, 10/11, 11/12, 12/13, 13/14, 14/15, 15/16} {
            \draw (v\from) -- (v\to) [line width=3pt];
            \draw (u\from) -- (u\to) [line width=3pt];
            \draw (w\from) -- (w\to) [line width=3pt];
        }

        \foreach \u \v in {2/4, 6/8} {
            \draw (v\u) to[out=315, in=225, looseness=0.5] (v\v);
            \draw (u\u) to[out=315, in=225, looseness=0.5] (u\v);
            \draw (w\u) to[out=315, in=225, looseness=0.5] (w\v);
        }

        \foreach \u \v in {10/12, 14/16} {
            \draw (v\u) to[out=135, in=45] (v\v);
            \draw (u\u) to[out=135, in=45] (u\v);
            \draw (w\u) to[out=135, in=45] (w\v);
        }

        \foreach \u \v in {3/7} {
            \draw (v\u) to[out=45, in=135, looseness=0.5] (v\v);
            \draw (u\u) to[out=45, in=135, looseness=0.5] (u\v);
            \draw (w\u) to[out=45, in=135, looseness=0.5] (w\v);
        }

        \draw (v7) to[out=45, in=315, looseness=1.75] (v11);
        \draw (u7) to[out=45, in=315, looseness=1.75] (u11);
        \draw (w7) to[out=45, in=315, looseness=1.75] (w11);

        \foreach \u \v in {11/15} {
            \draw (v\u) to[out=225, in=315, looseness=1.75] (v\v);
            \draw (u\u) to[out=225, in=315, looseness=1.75] (u\v);
            \draw (w\u) to[out=225, in=315, looseness=1.75] (w\v);
        }

        \draw (v15) to[out=225, in=135, looseness=1.75] (v3);
        \draw (u15) to[out=225, in=135, looseness=1.75] (u3);
        \draw (w15) to[out=225, in=135, looseness=1.75] (w3);

        \draw (p8) to[out=225, in=90] (v12);
        \draw (v10) to[out=30, in=90, looseness=2] (u12);
        \draw (u10) to[out=60, in=150, looseness=2] (w12);
        \draw (w10) to[out=90, in=315] (p8);
        
    \end{tikzpicture}
    \caption{A clause gadget example for a clause $(x \lor x' \lor x'')$ with its respective $C_7$ cycle on vertices $\{u_8, v_{12}, v_{10}, v'_{12}, v'_{10}, v''_{12}, v''_{10}\}$. Backbone edges are shown in bold.}
    \label{fig:GK3620}
\end{figure}
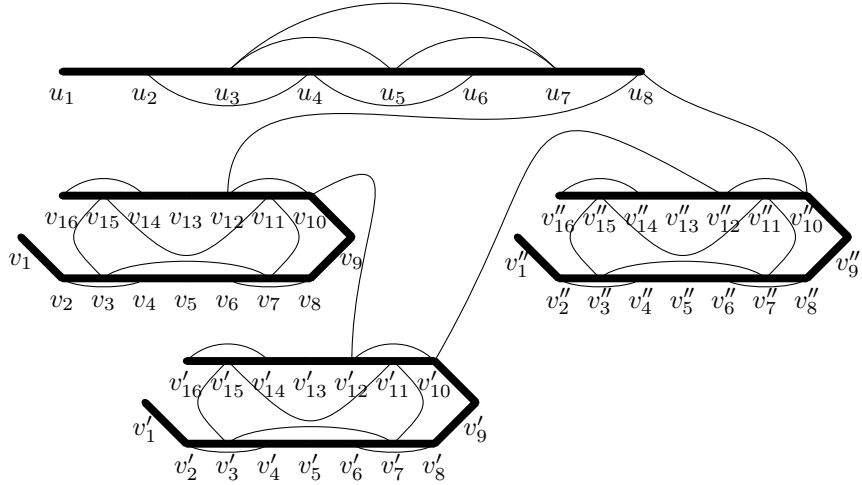

    It is sufficient to prove $(a)$ that the resulting graph $G$ with backbone $P$ has $\Delta(G) \le 4$ and that $(b)$ the input formula is satisfiable if and only if $BBC_\lambda(G, P) \le \lambda + 3$.

    The property $(a)$ is straightforward to verify: the degree of each beginning and each end of a gadget increases by at most $1$. Every $v_{4 i - 2}$ in any variable gadget gets at most one additional edge if it is connected to some cycle with $u_8$ because of some clause. For the same reason every $u_8$ in every force gadget gets at most two additional edges. The degrees of all other vertices of the graph did not change. But it is easy to check that after all these operations the degrees of all vertices cannot exceed $4$ -- so we indeed obtain a graph with maximum degree $4$ and a Hamiltonian backbone path.

    Next, we proceed with a proof of the \textbf{if} part of the property $(b)$. Assume that exists a valid $\lambda$-backbone $(\lambda + 3)$-coloring of $G$ with backbone $H$.
    Without loss of generality, we may assume that $c(u^i_1), c(v^j_1) \in \{1, 2, 3\}$ for every force gadget ($j = 1, \ldots, m$) and every variable gadget ($i = 1, \ldots, n$)-- and therefore each odd vertex on a backbone path in every gadget gets a color from $\{1, 2, 3\}$, and every even one gets a color from $\{\lambda + 1, \lambda + 2, \lambda + 3\}$.
    Let us next define the assignment $\phi$ for the input 3-SAT formula such that for every $i = 1, \ldots, n$ we set $\phi(x_i) = 1$ if $c(v^i_3) = 1$, and $\phi(x_i) = 0$ otherwise.
    
    Suppose to the contrary than $\phi$ is not a satisfiable assignment for the input formula.
    Thus, there exists a clause $C_i$ with all literals false.
    Let us look closer at its respective cycle $(u^i_8, w^j_1, w^j_2, w^k_1, w^k_2, w^l_1, w^l_2)$. From \Cref{lem:force_gadget} we know that $c(u^i_8) = \lambda + 3$. However, by construction, each $w$ is an even vertex on the backbone path in its respective variable gadget, thus $c(w) \in \{\lambda + 1, \lambda + 2, \lambda + 3\}$.
    But the cycle has odd length, therefore there has to be a vertex with color $\lambda + 1$. Without loss of generality assume it was one of the vertices $w^j_1$, $w^j_2$.

    If $x_j$ appeared in $C_i$ without negation, then $c(w^j_1) = c(v^j_{8 s_j - 6}) = \lambda + 1$ or $c(w^j_2) = c(v^j_{8 s_j - 4}) = \lambda + 1$. Either way, $c(v^j_{8 s_j - 5}) = 1$ and from \Cref{lem:variable_gadget} it follows that $c(v^j_3) = 1$. However, this means that $\phi(x_j) = 1$ so $C_i$ is satisfied -- a contradiction.
    Similarly, if $x_j$ appeared in $C_i$ in a negated form, then either $c(v^j_{8 s_j - 2}) = \lambda + 1$ or $c(v^j_{8 s_j}) = \lambda + 1$ so it has to be the case that $c(v^j_{8 s_j - 1}) = 1$ and by \Cref{lem:variable_gadget} it follows that $c(v^j_3) = 2$. This, in turn, means that $\phi(x_j) = 0$ so again $C_i$ is satisfied -- and again we got a contradiction so $\phi$ is indeed an assignment that satifies the formula.

    Now we proceed with a proof of \textbf{only if} part of the property $(b)$. Suppose that we have a satisfying assignment $\phi$. First, for every force gadget we set $c(u_1) = c(u_3) = 1$, $c(v_5) = 2$, $c(v_7) = 3$, $c(v_4) = \lambda + 2$, $c(u_2) = c(u_6) = c(u_8) = \lambda + 3$. One can verify that such a coloring does not violate any edge condition within force gadgets. Second, we set $c(v^i_{8 j - 5}) = 1$ and $c(v^i_{8 j - 1}) = 2$ for $j = 1, \ldots, m$ for every $i$ such that $\phi(x_i) = 1$. Otherwise, we assign $c(v^i_{8 j - 5}) = 2$ and $c(v^i_{8 j - 1}) = 1$.

    Next, for every clause $C_i$ we color the respective cycle in the following way: if the literal $x$ or $\lnot x$ in the clause is true, then we assign $c(w_1, w_2) = (\lambda + 1, \lambda + 2)$ to its respective vertices, otherwise we set $c(w_1, w_2) = (\lambda + 2, \lambda + 3)$, using notation introduced in previous sections. It is easy to verify that since there is at least one true literal in the clause, there is always a feasible coloring of the cycle respecting all these conditions.

    This way, in a variable gadget for any $x_j$ there are already some colored vertices. Therefore, if $\phi(x_j) = 1$, then
    \begin{description}
        \item[$(i)$] we know that $c(v^i_{8 j - 5}) = 1$ and $c(v^i_{8 j - 1}) = 2$ for all $j = 1, \ldots, m$,
        \item[$(ii)$] it holds that $c(v^j_{8 s_j - 6}, v^j_{8 s_j - 4}) = (\lambda + 1, \lambda + 2)$ for some (possibly none) $s_j \in \{1, \ldots, m\}$ -- since there are at most $m$ appearances of a non-negated literal $x_j$ in the formula,
        \item[$(iii)$] it holds also that $c(v^j_{8 s_j - 2}, v^j_{8 s_j}) = (\lambda + 2, \lambda + 3)$ for some (possibly none) $s_j \in \{1, \ldots, m - 1\}$ -- since there are at most $m - 1$ appearances of a literal $\lnot x_j$ in the formula.
    \end{description}
    Note that the condition $(i)$ above guarantees that a cycle in the gadget with edges $E(C)$, that is, defined on a (cyclic) sequence of vertices $(v_{4 j - 1}\colon j = 1, \ldots, 2 t_i)$ is colored properly by alternating colors $1$ and $2$.

    Now suppose we perform a \emph{completion step} by setting for the yet uncolored vertices that $c(v^i_{4 j - 3}) = 1$, $c(v^i_{4 j - 2}) = \lambda + 2$, $c(v^i_{4 j}) = \lambda + 3$.
    This way, all the requirements for the backbone edges from $P$ are met since the odd vertices on the backbone path get colors $\{1, 2\}$ and the even ones $\{\lambda + 1, \lambda + 2, \lambda + 3\}$.

    If an edge in a variable gadget does not belong to its set $E(P)$ or $E(C)$, already shown to be properly colored, then it has to be in $E(M)$. However, some of these edges were already colored, provided they belonged to a $7$-cycle representing some clause.
    The only remaining ones are the edges $\{v^i_{4 j - 2}, v^i_{4 j}\} \in E(M)$ with endpoints colored during the completion step, i.e. with $c(v^i_{4 j - 2}) = \lambda + 2 \neq c(v^i_{4 j}) = \lambda + 3$, thus satisfying the respective edge requirement.

    The proof for $\phi(x_j) = 0$ is similar, we just have $c(v^i_{8 j - 5}) = 2$ and $c(v^i_{8 j - 1}) = 1$ in $i$ instead, together with pairs of colors in $(ii)$ and $(iii)$ swapped. By using exactly the same completion step we can prove that there also exists a $(\lambda + 3)$-coloring $c$ respecting all backbone and non-backbone conditions.

    Finally, we observe that by the above construction all gadgets (both variable and clause) beginnings and endings get colors $1$ and $\lambda + 3$, respectively. Thus, the conditions for the backbone edges connecting different gadgets are met as well.
\end{proof}

\section{Tree backbone}

Since the upper limits for the values of $BBC_\lambda(G, T)$ were already covered by \Cref{thm:miskuf-extension} and in \cite{mivskuf2009backbone,havet2014circular} (see \Cref{miskuf:d} and \Cref{thm:havet} in this paper, respectively), and some hardness results immediately follow from the theorems for path backbones, we focus on one of the remaining cases that differentiates this problem from the one with path backbones.

\subsection{The hard case}

Although the proof is probably the longest one, it follows the pattern outlined in the hardness proofs above. That is, for the major part we just construct gadgets that either forbid or force certain colors in $\lambda$-backbone $(\lambda + 4)$-coloring to be used in a reduction.

\begin{figure}[H]
\centering
\begin{tikzpicture}[scale=0.78]

    \foreach \val [count=\i from 9] in {3} {
        \node[draw, circle, fill=black, inner sep=2pt, label={[label distance=1mm]000:$v_{\i}$}] (v\i) at (\val, 3) {};
    }

    \foreach \val [count=\i from 8] in {3} {
        \node[draw, circle, fill=black, inner sep=2pt, label={[label distance=1mm]0:$v_{\i}$}] (v\i) at (\val, 2) {};
    }

    \foreach \val [count=\i from 5] in {1, 3, 5} {
        \ifthenelse{\i = 6}{\def\x{270}}{\def\x{090}};
        \node[draw, circle, fill=black, inner sep=2pt, label={[label distance=0.5mm]\x:$v_{\i}$}] (v\i) at (\val, 1) {};
    }
    
    \foreach \val [count=\i] in {0, 2, 4, 6} {
        \ifthenelse{\i < 3}{\def\x{180}}{\def\x{0}};
        \node[draw, circle, fill=black, inner sep=2pt, label={[label distance=1mm]\x:$v_{\i}$}] (v\i) at (\val, -0.5) {};
    }

    \draw (v1) -- (v5) -- (v2) [line width=3pt];
    \draw (v3) -- (v7) -- (v4) [line width=3pt];
    \draw (v5) -- (v6) -- (v7) [line width=3pt];
    \draw (v6) -- (v8) -- (v9) [line width=3pt];

    \draw (v5) -- (v8) -- (v7);

    \foreach \from in {1, 2, 3, 4} {
        \foreach \to in {1, 2, 3, 4} {
            \ifthenelse{\from < \to}{\draw (v\from) to[out=-45, in=-135] (v\to);}{}
        }
    }
\end{tikzpicture}
\ 
\begin{tikzpicture}[scale=0.78]
    \node[draw, circle, fill=black, inner sep=2pt, label={[label distance=1mm]0:$u_4$}] (v22) at (7, 4) {};
    \node[draw, circle, fill=black, inner sep=2pt, label={[label distance=1mm]0:$u_3$}] (v21) at (7, 3) {};
    \node[draw, circle, fill=black, inner sep=2pt, label={[label distance=1mm]180:$u_1$}] (v18) at (6, 2) {};
    \node[draw, circle, fill=black, inner sep=2pt, label={[label distance=1mm]0:$u_2$}] (v19) at (8, 2) {};

    \node[draw, circle, fill=black, inner sep=2pt, label={[label distance=1mm]180:$v_9$}] (v17) at (5, 1) {};
    \node[draw, circle, fill=black, inner sep=2pt, label={[label distance=1mm]0:$v'_9$}] (v20) at (9, 1) {};

    \coordinate (v15) at (5, -0);
    \coordinate (v16) at (9, -0);

    \draw (3.5, -0) rectangle (6.5, -1.2);
    \node at (5, -0.6) {$N_4$};
    \draw (7.5, -0) rectangle (10.5, -1.2);
    \node at (9, -0.6) {$N_4$};

    \draw (v16) -- (v20) [line width=3pt];
    \draw (v15) -- (v17) [line width=3pt];

    \foreach \from in {17, 18, 19, 20} {
        \foreach \to in {17, 18, 19, 20} {
            \ifthenelse{\from < \to}{\draw (v\from) to (v\to);}{}
        }
    }

    \draw (v18) -- (v21) -- (v22) [line width=3pt];
    \draw (v19) -- (v21) [line width=3pt];
\end{tikzpicture}
\caption{$N_4$-gadget (left) and $F_{\lambda + 4}$-gadget (right) with their backbone edges in bold.}
\label{fig:n4-f8-gadget}
\end{figure}
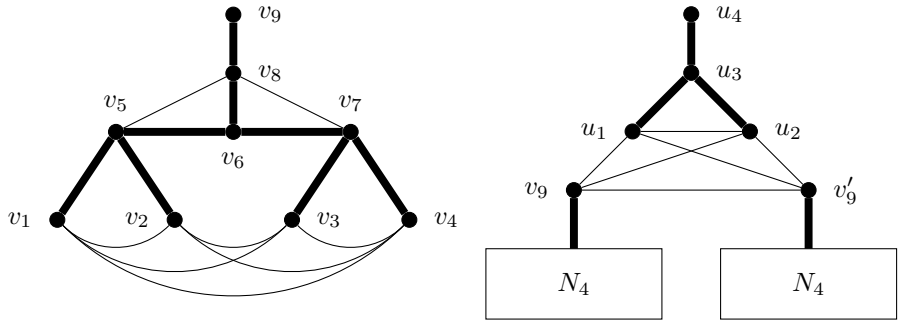

We start by defining the \emph{$N_4$-gadget}, presented in \Cref{fig:n4-f8-gadget} and proving its important property:
\begin{lemma}
    \label{lem:n4-gadget}
    Let $c$ be a $\lambda$-backbone $(\lambda + 4)$-coloring of the $N_4$-gadget such that $c(v_6) \in \{1, 2, 3, 4\}$. Then, $c(v_9) \in \{1, 2, 3\}$.
\end{lemma}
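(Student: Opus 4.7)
The plan is to reduce the statement to an upper bound $c(v_8) \le \lambda+3$. Since the only backbone edge incident to $v_9$ is $\{v_8, v_9\}$, the requirement $|c(v_8) - c(v_9)| \ge \lambda$ combined with $c(v_9) \in \{1, \ldots, \lambda+4\}$ and the bound $c(v_8) \ge \lambda+1 > 4$ (which the analysis will furnish, using the regime $\lambda \ge 4$ of the enclosing theorem) makes the upper window $\{c(v_8)+\lambda, \ldots, \lambda+4\}$ empty. Hence $c(v_9) \le c(v_8)-\lambda \le 3$, which is exactly what we want.

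First I would exploit the three backbone edges $\{v_5, v_6\}, \{v_6, v_7\}, \{v_6, v_8\}$ meeting at $v_6$: as $c(v_6) \in \{1,2,3,4\}$ and $\lambda \ge 4$, the lower window $\{1, \ldots, c(v_6)-\lambda\}$ is empty, so $c(v_5), c(v_7), c(v_8) \in \{\lambda+c(v_6), \ldots, \lambda+4\}$. The case $c(v_6) = 4$ is immediately eliminated: all three values would then collapse to $\lambda+4$, contradicting the non-backbone edges $\{v_5, v_8\}$ and $\{v_7, v_8\}$. So henceforth $c(v_6) \in \{1,2,3\}$ and $c(v_8) \ge \lambda+1$.

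The core of the proof is a short case analysis on $(c(v_5), c(v_7))$. Writing $c(v_5) = \lambda+a$ and $c(v_7) = \lambda+b$, the backbone edges $\{v_1, v_5\}, \{v_2, v_5\}$ restrict $c(v_1), c(v_2) \in \{1, \ldots, a\}$, and symmetrically $c(v_3), c(v_4) \in \{1, \ldots, b\}$. Because $\{v_1, v_2, v_3, v_4\}$ induces a $K_4$ in $G$, these four colors must be pairwise distinct, forcing $\max(a,b) \ge 4$; together with the trivial observation $\min(a,b) \ge 2$ (otherwise one of the pairs $\{c(v_1), c(v_2)\}, \{c(v_3), c(v_4)\}$ cannot even consist of two distinct colors), we conclude $\max(a,b) = 4$, i.e.\ at least one of $c(v_5), c(v_7)$ equals $\lambda+4$.

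Plugging this back, the conditions $c(v_8) \ne c(v_5)$, $c(v_8) \ne c(v_7)$ (the non-backbone edges $\{v_5, v_8\}, \{v_7, v_8\}$) together with $c(v_8) \ge \lambda+1$ leave $c(v_8) \in \{\lambda+1, \lambda+2, \lambda+3\}$, so $c(v_8) \le \lambda+3$ and the opening remark closes the argument. I expect the main obstacle to be merely the bookkeeping of the case split on $(a, b)$ — in particular, making sure no feasible pair has been missed — although the $v_5 \leftrightarrow v_7$ symmetry substantially reduces the work, and the $K_4$ constraint eliminates most pairs very quickly.
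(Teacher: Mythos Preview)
Your argument is correct and follows essentially the same route as the paper: push the backbone constraints from $v_6$ outward to force $c(v_5),c(v_7),c(v_8)\in\{\lambda+1,\ldots,\lambda+4\}$ and $c(v_1),\ldots,c(v_4)\in\{1,\ldots,4\}$, use the $K_4$ on $\{v_1,v_2,v_3,v_4\}$ to force one of $c(v_5),c(v_7)$ to equal $\lambda+4$, deduce $c(v_8)\le\lambda+3$, and conclude $c(v_9)\le 3$. Your separate elimination of $c(v_6)=4$ and the remark $\min(a,b)\ge 2$ are harmless but unnecessary (the conclusion $\max(a,b)=4$ follows from $\max(a,b)\ge 4$ and $a,b\le 4$ alone), and what you describe as a ``case analysis on $(a,b)$'' is really a single uniform argument, as in the paper.
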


\begin{proof}
    The backbone edge conditions imply that if $c(v_6) \in \{1, 2, 3, 4\}$, then $c(v_1), c(v_2), c(v_3), c(v_4) \in \{1, 2, 3, 4\}$ as well.
    Thus, one of them has to get color $4$, and its backbone neighbor (either $v_5$ or $v_7$) has to be colored with $\lambda + 4$. This immediately implies that $c(v_8) \in \{\lambda + 1, \lambda + 2, \lambda + 3\}$ and therefore $c(v_9) \in \{1, 2, 3\}$.
\end{proof}

By replacing colors $i$ with $\lambda + 5 - i$ in $c$ we immediately obtain:
\begin{corollary}
    \label{col:n4-gadget}
     Let $c$ be a $\lambda$-backbone $(\lambda + 4)$-coloring of the $N_4$-gadget such that $c(v_6) \in \{\lambda + 1, \lambda + 2, \lambda + 3, \lambda + 4\}$. Then, $c(v_9) \in \{\lambda + 2, \lambda + 3, \lambda + 4\}$.
\end{corollary}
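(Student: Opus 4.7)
The plan is to reduce this statement directly to \Cref{lem:n4-gadget} by exploiting the natural palette-reversal symmetry of $\lambda$-backbone $(\lambda+4)$-colorings. The key observation is that the involution $\sigma \colon i \mapsto \lambda + 5 - i$ is a bijection of the color set $\{1, 2, \ldots, \lambda + 4\}$ that preserves absolute differences, since $|\sigma(a) - \sigma(b)| = |b - a|$ for all $a, b$.

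Given a $\lambda$-backbone $(\lambda+4)$-coloring $c$ of the $N_4$-gadget with $c(v_6) \in \{\lambda+1, \lambda+2, \lambda+3, \lambda+4\}$, I would define $c'(v) = \lambda + 5 - c(v)$ for every vertex $v$. By the difference-preservation noted above, both coloring conditions (distinctness across non-backbone edges and separation by at least $\lambda$ across backbone edges) are automatically inherited from $c$, so $c'$ is again a valid $\lambda$-backbone $(\lambda+4)$-coloring of the same gadget with the same backbone.

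Next, observe that $c'(v_6) = \lambda + 5 - c(v_6) \in \{1, 2, 3, 4\}$, which places $c'$ exactly in the hypothesis of \Cref{lem:n4-gadget}. Applying that lemma yields $c'(v_9) \in \{1, 2, 3\}$, and pushing the conclusion back through $\sigma$ gives $c(v_9) = \lambda + 5 - c'(v_9) \in \{\lambda + 2, \lambda + 3, \lambda + 4\}$, which is precisely the claim.

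There is essentially no obstacle here: the only point needing a moment's care is verifying that $\sigma$ genuinely is an automorphism of the space of $\lambda$-backbone $(\lambda + 4)$-colorings of a fixed graph with a fixed backbone, but this reduces to the trivial algebraic identity on absolute values. The whole argument merely formalizes the one-line remark the authors make immediately before the corollary's statement.
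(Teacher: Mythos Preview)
Your proposal is correct and follows exactly the paper's approach: the authors simply note that replacing each color $i$ with $\lambda + 5 - i$ turns a valid $\lambda$-backbone $(\lambda+4)$-coloring into another one, and the corollary then follows immediately from \Cref{lem:n4-gadget}. You have merely spelled out this one-line symmetry argument in detail.
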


This gadget, in turn, is useful to define the \emph{$F_{\lambda + 4}$-gadget}, also shown in \Cref{fig:n4-f8-gadget} and proving its important property:
\begin{lemma}
    \label{lem:f8-gadget}
    Let $c$ be a $\lambda$-backbone $(\lambda + 4)$-coloring of the $F_{\lambda + 4}$-gadget such that $c(u_4) \in \{1, 2, 3, 4\}$ and $c(v_9), c(v'_9) \in \{1, 2, 3\}$. Then, $c(u_3) = \lambda + 4$.
\end{lemma}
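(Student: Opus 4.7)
The plan is to propagate the backbone constraints forward along $u_4 \to u_3 \to \{u_1, u_2\}$, and then use the clique structure on $\{u_1, u_2, v_9, v'_9\}$ to pin down the exact color of $u_3$.

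First, I would apply the backbone edge $\{u_3, u_4\}$ together with the assumption $c(u_4) \in \{1, 2, 3, 4\}$. In the regime $\lambda \geq 4$ in which this lemma will be invoked (matching \Cref{thm:tree-lambda+4}), the downward possibility $c(u_3) \leq c(u_4) - \lambda$ yields a non-positive color and is excluded, so $c(u_3) \in \{\lambda + 1, \lambda + 2, \lambda + 3, \lambda + 4\}$. Applying the analogous argument to the backbone edges $\{u_1, u_3\}$ and $\{u_2, u_3\}$ then confines $c(u_1), c(u_2) \in \{1, 2, 3, 4\}$.

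Next, I would exploit the fact that $\{u_1, u_2, v_9, v'_9\}$ forms a $K_4$ of non-backbone edges in the gadget, so all four vertices must receive pairwise distinct colors. Combining this with the hypothesis $c(v_9), c(v'_9) \in \{1, 2, 3\}$ and the previous step, the four colors must lie in $\{1, 2, 3, 4\}$ with two of them already occupying a pair inside $\{1, 2, 3\}$. Therefore color $4$ is forced to appear on one of $u_1$ or $u_2$; without loss of generality $c(u_1) = 4$.

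Finally, the backbone edge $\{u_1, u_3\}$ gives $|c(u_3) - 4| \geq \lambda$, and since we already established $c(u_3) \geq \lambda + 1 > 4$, this tightens to $c(u_3) \geq \lambda + 4$. Together with the global upper bound of $\lambda + 4$ on the available colors, we conclude $c(u_3) = \lambda + 4$. I do not foresee a serious obstacle here: the argument is a short chain of backbone propagations closed off by a single clique-counting step. The only care needed is to make explicit that $\lambda \geq 4$ eliminates the downward branch of each backbone constraint we invoke, and to note that the symmetric alternative $c(u_2) = 4$ yields the same conclusion by the symmetry between $u_1$ and $u_2$ in the gadget.
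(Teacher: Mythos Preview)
Your proof is correct and follows essentially the same route as the paper: propagate the backbone constraint from $u_4$ through $u_3$ to $u_1,u_2$, use the $K_4$ on $\{u_1,u_2,v_9,v'_9\}$ to force color $4$ on $u_1$ or $u_2$, and conclude $c(u_3)=\lambda+4$. The paper compresses your first two steps into the single word ``Clearly''; your version is more explicit, in particular in flagging the reliance on $\lambda\ge 4$ to exclude the downward branch, which the paper leaves implicit from the surrounding context of \Cref{thm:tree-lambda+4}.
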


\begin{proof}
    Clearly, $c(u_1), c(u_2) \in \{1, 2, 3, 4\}$.
    Since $u_1$, $u_2$, $v_9$, and $v'_9$ form a clique colored with a set $\{1, 2, 3, 4\}$, one of the vertices $u_1$, $u_2$ has to be colored with $4$ in $c$, and it immediately follows that $c(u_3) = \lambda + 4$.
\end{proof}

\begin{corollary}
    \label{col:f8-gadget}
    Let $c$ be a $\lambda$-backbone $(\lambda + 4)$-coloring of the $F_{\lambda + 4}$-gadget such that $c(u_4) \in \{\lambda + 1, \lambda + 2, \lambda + 3, \lambda + 4\}$ and $c(v_9), c(v'_9) \in \{\lambda + 2, \lambda + 3, \lambda + 4\}$. Then, $c(u_3) = 1$.
\end{corollary}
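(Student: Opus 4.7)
The plan is to reduce this statement to Lemma \ref{lem:f8-gadget} by exploiting the color-reversing symmetry of the $\lambda$-backbone coloring problem, exactly as was done to derive Corollary \ref{col:n4-gadget} from Lemma \ref{lem:n4-gadget}.

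Concretely, given a $\lambda$-backbone $(\lambda+4)$-coloring $c$ of the $F_{\lambda+4}$-gadget satisfying the hypotheses of the corollary, I would define the coloring $c'$ by $c'(v) = \lambda + 5 - c(v)$ for every vertex $v$. Then $c'$ takes values in $\{1, 2, \ldots, \lambda+4\}$; since $|c'(u) - c'(v)| = |c(u) - c(v)|$ for every pair of vertices, $c'$ is again a valid $\lambda$-backbone $(\lambda+4)$-coloring of the same gadget (with the same backbone). Under this substitution, the hypotheses translate as $c'(u_4) \in \{1,2,3,4\}$ and $c'(v_9), c'(v'_9) \in \{1,2,3\}$, which are exactly the hypotheses of Lemma \ref{lem:f8-gadget}.

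Applying that lemma to $c'$ yields $c'(u_3) = \lambda+4$, and converting back gives $c(u_3) = \lambda + 5 - (\lambda+4) = 1$, which is the desired conclusion.

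There is no real obstacle here, the only thing to verify is that the reversing map preserves both edge conditions (trivial for non-backbone edges, and the backbone condition $|c(u) - c(v)| \geq \lambda$ is invariant under $c \mapsto \lambda+5-c$) and that the structural constraints in the hypotheses on $u_4$, $v_9$, $v'_9$ are the exact reverses of those in Lemma \ref{lem:f8-gadget}, which is immediate from the set complements within $\{1,\ldots,\lambda+4\}$.
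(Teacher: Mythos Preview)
Your proposal is correct and follows exactly the paper's intended approach: just as \Cref{col:n4-gadget} was obtained from \Cref{lem:n4-gadget} by the substitution $i \mapsto \lambda + 5 - i$, the paper leaves \Cref{col:f8-gadget} without an explicit proof because it follows from \Cref{lem:f8-gadget} by the same color-reversing symmetry you describe.
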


Next, we proceed with two further gadgets, shown below in \Cref{fig:f7-f6-gadget}, $F_{\lambda + 3}$ and $F_{\lambda + 2}$. As their name suggest, they are supposed to force colors $\lambda + 3$ and $\lambda + 2$ on some given vertices, respectively.

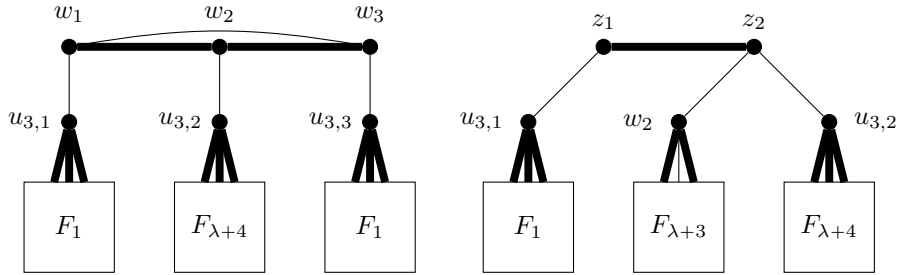
\begin{figure}[H]
\centering
\begin{tikzpicture}[scale=1]
\foreach \val [count=\i] in {-2, 0, 2} {
    \ifthenelse{\i = 2}{\def\y{\lambda + 4}}{\def\y{1}}
    \node[draw, circle, fill=black, inner sep=2pt, label={[label distance=1mm]90:$w_{\i}$}] (w\i) at (\val, 0) {};
    \node[draw, circle, fill=black, inner sep=2pt, label={-180:$u_{3,\i}$}] (u\i) at (\val, -1) {};
    \draw (\val - 0.6, -1.8) rectangle (\val + 0.6, -3);
    \draw (w\i) -- (u\i);
    \draw[line width=3pt] (u\i) -- (\val, -1.8);
    \draw[line width=3pt] (\val - 0.2, -1.8) -- (u\i) -- (\val + 0.2, -1.8);
    \node at (\val, -2.4) {$F_{\y}$};
}
\draw[line width=3pt] (w1) -- (w2) -- (w3);
\draw (w1) to[out=10, in=170] (w3);
\end{tikzpicture}
\quad
\begin{tikzpicture}[scale=1]
\node[draw, circle, fill=black, inner sep=2pt, label={above:$z_1$}] (z1) at (-1,0) {};
\node[draw, circle, fill=black, inner sep=2pt, label={above:$z_2$}] (z2) at (1,0) {};
\foreach \val [count=\i] in {-2, 0, 2} {
    \ifthenelse{\i = 3}{\def\x{0}}{\def\x{180}}
    \ifthenelse{\i = 1}{\def\y{1}}{\ifthenelse{\i = 2}{\def\y{\lambda + 3}}{\def\y{\lambda + 4}}}
    \ifthenelse{\i = 1}{\def\z{$u_{3,1}$}}{\ifthenelse{\i = 2}{\def\z{$w_2$}}{\def\z{$u_{3,2}$}}}
    \node[draw, circle, fill=black, inner sep=2pt, label={[label distance=1mm]\x:\z}] (u\i) at (\val, -1) {};
    \draw (\val - 0.6, -1.8) rectangle (\val + 0.6, -3);
    \ifthenelse{\i = 2}{\draw (u\i) -- (\val, -1.8)}{\draw[line width=3pt] (u\i) -- (\val, -1.8)};
    \draw[line width=3pt] (\val - 0.2, -1.8) -- (u\i) -- (\val + 0.2, -1.8);
    \node at (\val, -2.4) {$F_{\y}$};
}
\draw[line width=3pt] (z1) -- (z2);
\draw (u1) -- (z1);
\draw (u2) -- (z2) -- (u3);
\end{tikzpicture}
\caption{$F_{\lambda + 3}$-gadget (left) and $F_{\lambda + 2}$-gadget (right) with their backbone edges in bold.}
\label{fig:f7-f6-gadget}
\end{figure}

Note that $F_{\lambda + 3}$-gadget uses not only $F_{\lambda + 4}$-subgadgets, but also so-called $F_1$-subgadgets that will be forcing color $1$ on a given vertex.
These are exactly $F_{\lambda + 4}$-gadgets, the only difference is that in the proofs we will construct $F_{\lambda + 4}$-gadgets with a condition $c(u_4) \in \{1, 2, 3, 4\}$, whereas for $F_1$-gadgets it will be rather $c(u_4) \in \{\lambda + 1, \lambda + 2, \lambda + 3, \lambda + 4\}$ -- thus $c(u_3) = \lambda + 4$ in $F_{\lambda + 4}$-gadget and $c(u_3) = 1$ in $F_1$-gadget, respectively. They, in turn, require $N_{\lambda + 1}$-subgadgets instead of $N_4$-subgadgets, identical to $N_4$-gadgets, again with condition $c(v_6)  \in \{1, 2, 3, 4\}$ switched to $c(v_6) \in \{\lambda + 1, \lambda + 2, \lambda + 3, \lambda + 4\}$ -- thus enforcing $c(v_9) \in \{\lambda + 2, \lambda + 3, \lambda + 4\}$ instead of $c(v_9) \in \{1, 2, 3\}$ (see \Cref{col:n4-gadget}).

\begin{lemma}
    \label{lem:f7-gadget}
    Let $c$ be a $\lambda$-backbone $(\lambda + 4)$-coloring of the $F_{\lambda + 3}$-gadget such that $c(w_1) \in \{1, 2, 3, 4\}$, $c(u_{3,1}) = c(u_{3,3}) = 1$ and $c(u_{3,2}) = \lambda + 4$. Then, $c(w_2) = \lambda + 3$.
\end{lemma}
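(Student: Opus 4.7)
The plan is to work out the constraints on the three vertices $w_1, w_2, w_3$ forming a triangle in the $F_{\lambda+3}$-gadget, using that $\{w_1,w_2\}$ and $\{w_2,w_3\}$ are backbone edges while $\{w_1,w_3\}$ is a non-backbone edge. First I would collect the immediate local restrictions from the hypotheses: the neighbor $u_{3,1}$ with $c(u_{3,1})=1$ adjacent to $w_1$ forces $c(w_1)\ne 1$, so together with the assumption $c(w_1)\in\{1,2,3,4\}$ we obtain $c(w_1)\in\{2,3,4\}$. Similarly, the neighbor $u_{3,2}$ with $c(u_{3,2})=\lambda+4$ gives $c(w_2)\ne\lambda+4$, and $u_{3,3}$ with $c(u_{3,3})=1$ gives $c(w_3)\ne 1$.

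Next I would use the backbone edge $\{w_1,w_2\}$: since $c(w_1)\in\{2,3,4\}$ and $|c(w_1)-c(w_2)|\ge\lambda$, the only feasible values are $c(w_2)\ge\lambda+2$, so combined with $c(w_2)\ne\lambda+4$ we get $c(w_2)\in\{\lambda+2,\lambda+3\}$. I would then rule out the case $c(w_2)=\lambda+2$ by a short case analysis: the backbone edge $\{w_2,w_3\}$ would force $c(w_3)\le 2$, which together with $c(w_3)\ne 1$ gives $c(w_3)=2$; at the same time the backbone edge $\{w_1,w_2\}$ forces $c(w_1)\le 2$, so combined with $c(w_1)\ne 1$ we get $c(w_1)=2$; but then $c(w_1)=c(w_3)=2$, contradicting the non-backbone edge $\{w_1,w_3\}$ in the triangle. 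Therefore $c(w_2)=\lambda+3$.

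The argument is purely local case analysis on the triangle $w_1 w_2 w_3$ with its three attached $F$-subgadget outputs, so there is no single hard step; the only point to watch is remembering the non-backbone diagonal $\{w_1,w_3\}$, which is precisely what causes the $\lambda+2$ case to collapse.
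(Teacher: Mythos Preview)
Your proof is correct and follows essentially the same approach as the paper: a short case analysis on $c(w_2)$ using the triangle $w_1w_2w_3$ together with the three forced neighbor colors $c(u_{3,i})$. Your version is in fact slightly tidier, since by using $c(w_1)\neq 1$ up front you directly obtain $c(w_2)\ge\lambda+2$ and avoid treating the case $c(w_2)=\lambda+1$ separately, whereas the paper starts from $c(w_2)\in\{\lambda+1,\lambda+2,\lambda+3,\lambda+4\}$ and eliminates each of $\lambda+1,\lambda+2,\lambda+4$ in turn.
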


\begin{proof}
    The backbone edge between $w_1$ and $w_2$ enforces that $c(w_2) \in \{\lambda + 1, \lambda + 2, \lambda + 3, \lambda + 4\}$.

    First, if it were the case that $c(w_2) = \lambda + 4$, then it would contradict $c(u_{3,2}) = \lambda + 4$.
    If it were the case that $c(w_2) = \lambda + 1$, then both $c(w_1) = c(w_3) = 1$ -- a contradiction.
    And finally, if it were the case that $c(w_1) = \lambda + 2$, then $c(w_1, w_3) = (1, 2)$ so without loss of generality $c(w_1) = 1$, but this would contradict $c(u_{3,1}) = 1$. Thus, it has to be the case that $c(w_1) = \lambda + 3$ -- and it is easy to verify that the appropriate coloring of the whole gadget exists.
\end{proof}

\begin{lemma}
    \label{lem:f6-gadget}
    Let $c$ be a $\lambda$-backbone $(\lambda + 4)$-coloring of the $F_{\lambda + 2}$-gadget such that $c(z_1) \in \{1, 2, 3, 4\}$, $c(u_{3,1}) = 1$, $c(u_{3,2}) = \lambda + 4$ and $c(w_2) = \lambda + 3$. Then, $c(z_2) = \lambda + 2$.
\end{lemma}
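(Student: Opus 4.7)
The plan is to consolidate the color constraints acting on $z_2$ from all edges incident to it, then read off the unique admissible value.

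First, I would catalogue the edges at $z_2$: one backbone edge to $z_1$ and two non-backbone edges, to $w_2$ and to $u_{3,2}$. The hypotheses $c(w_2) = \lambda + 3$ and $c(u_{3,2}) = \lambda + 4$ immediately forbid $c(z_2) \in \{\lambda + 3, \lambda + 4\}$ via the proper-coloring condition on non-backbone edges.

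Next, I would sharpen the hypothesis $c(z_1) \in \{1, 2, 3, 4\}$. Since $\{z_1, u_{3,1}\}$ is a non-backbone edge and $c(u_{3,1}) = 1$, the color $1$ is unavailable at $z_1$, so in fact $c(z_1) \in \{2, 3, 4\}$. Combining this with the backbone condition $|c(z_1) - c(z_2)| \geq \lambda$ in the regime of interest (where $\lambda \geq 4$, as arises in \Cref{thm:tree-lambda+4}), the bound $c(z_1) \leq 4 \leq \lambda$ rules out the ``low'' branch $c(z_2) \leq c(z_1) - \lambda$, so we must have $c(z_2) \geq c(z_1) + \lambda \geq \lambda + 2$. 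Combined with $c(z_2) \leq \lambda + 4$ and the two exclusions above, this leaves only $c(z_2) = \lambda + 2$.

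The main obstacle, really the only subtlety, is noticing that the weak hypothesis $c(z_1) \in \{1, 2, 3, 4\}$ must be strengthened to $\{2, 3, 4\}$ using the non-backbone edge to $u_{3,1}$; without this step, the argument still permits $c(z_2) = \lambda + 1$. This mirrors exactly how the case $c(w_2) = \lambda + 2$ is excluded in the proof of \Cref{lem:f7-gadget}, where the triangle forces a color conflict with $u_{3,1}$.
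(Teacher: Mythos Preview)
Your argument is correct and uses exactly the same ingredients as the paper's proof: the backbone edge $\{z_1,z_2\}$, the non-backbone edges to $w_2$ and $u_{3,2}$ excluding $\lambda+3$ and $\lambda+4$, and the non-backbone edge $\{z_1,u_{3,1}\}$ excluding the possibility that would lead to $c(z_2)=\lambda+1$. The only difference is ordering: you sharpen $c(z_1)\in\{2,3,4\}$ first and then read off $c(z_2)\ge\lambda+2$ directly, whereas the paper first deduces $c(z_2)\in\{\lambda+1,\lambda+2\}$ and then rules out $\lambda+1$ by observing it would force $c(z_1)=1=c(u_{3,1})$; both implicitly use $\lambda\ge 4$, which you rightly make explicit.
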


\begin{proof}
    By the backbone edge condition it follows that $c(z_2) \in \{\lambda + 1, \lambda + 2, \lambda + 3, \lambda + 4\}$.
    The colors of $u_{3,2}$ and $w_2$ ensure that $c(z_2) \in \{\lambda + 1, \lambda + 2\}$.
    However, if it were that $c(z_2) = \lambda + 1$, then $c(z_1) = 1$ -- which would contradict $c(u_{3,1}) = 1$.
\end{proof}

Overall, observe that by combining all these lemmas and corollaries above, it is sufficient to ensure that the particular vertices have colors either in $\{1, 2, 3, 4\}$ or in $\{\lambda + 1, \lambda + 2, \lambda + 3, \lambda + 4\}$ ($v_6$ from respective $N_4$-gadgets, $u_4$ from $F_{\lambda + 4}$-gadgets, $w_1$ from $F_{\lambda + 3}$-gadgets, and $z_1$ from $F_{\lambda + 2}$-gadgets) to ensure the colors for particular vertices in $N_4$, $F_{\lambda + 4}$, $F_{\lambda + 3}$, and $F_{\lambda + 2}$-gadgets.
In particular, we distinguish two types of vertices:
\begin{enumerate}
    \item \emph{characteristic vertices}, such that they have to be colored with exactly one of two colors in any $\lambda$-backbone $(\lambda + 4)$-coloring, such that their degree is at most $3$: $u_3$ in $F_{\lambda + 4}$-gadget ($1$ or $\lambda + 4$), $w_3$ in $F_{\lambda + 3}$-gadget ($2$ or $\lambda + 3$), $z_2$ in $F_{\lambda + 2}$-gadget ($3$ or $\lambda + 2$),
    \item \emph{connection vertices}, distinct from characteristic vertices, such that their degree is at most $3$: e.g. $v_6$ in $N_4$-gadget, $u_4$ in $F_{\lambda + 4}$-gadget, $w_1$ in $F_{\lambda + 3}$-gadget, $z_1$ in $F_{\lambda + 2}$-gadget. Note that these are exactly the same vertices that are used in assumptions of \Cref{lem:n4-gadget,lem:f8-gadget,lem:f7-gadget,lem:f6-gadget}.
\end{enumerate}

Now, we can state the main theorem of this section:
\begin{theorem}
    \label{thm:tree-lambda+4}
    Let $G$ be a tree with $\Delta(G) \le 4$ and $T$ be its spanning tree. Then for every $\lambda \ge 4$ deciding whether $BBC_\lambda(G, T) \le \lambda + 4$ is $\NP$-complete.
\end{theorem}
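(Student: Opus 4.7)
The plan is to reduce from \texttt{3-SAT} (or a closely related variant) to the decision problem $BBC_\lambda(G, T) \le \lambda + 4$, exploiting the three ``levels'' of forcing provided by the gadgets $F_{\lambda + 4}$, $F_{\lambda + 3}$, $F_{\lambda + 2}$ together with their symmetric partners ($F_1$ is already described explicitly; $F_2$ and $F_3$ are obtained by the colour flip $i \mapsto \lambda + 5 - i$ in the argument for the first two). First I would design a variable gadget for each variable $x_i$: a sequence of copies of the $F_{\lambda + 4}$-gadget, one per occurrence of $x_i$ in the formula, whose characteristic vertices $u_3$ are tied together by non-backbone edges (routed through auxiliary connection vertices) in such a way that they all must receive the same colour in any valid $\lambda$-backbone $(\lambda + 4)$-colouring. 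That common colour is either $1$, encoding $\phi(x_i) = \mathrm{true}$, or $\lambda + 4$, encoding $\phi(x_i) = \mathrm{false}$.

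Next I would design a clause gadget, attached via connection vertices of degree at most $3$ to one characteristic vertex of each variable gadget whose literal occurs in that clause. Using $F_{\lambda + 3}$- and $F_{\lambda + 2}$-gadgets together with a small clique as a ``selector'', the clause gadget would be arranged so that it admits a valid local $\lambda$-backbone $(\lambda + 4)$-colouring if and only if at least one of the three incoming characteristic colours lies on the ``true'' side. The three distinct colour levels $\{1, \lambda + 4\}$, $\{2, \lambda + 3\}$, $\{3, \lambda + 2\}$ give enough room to make the unsatisfied configuration provably infeasible while each satisfied configuration can be completed explicitly.

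The spanning tree backbone $T$ for the full graph $G$ is then assembled from the backbone subtrees internal to each gadget, joined by a small number of extra backbone edges along a spine connecting variable gadgets and clause gadgets. Crucially, every ``horizontal'' link used either to propagate a variable's colour between its occurrences or to attach a characteristic vertex to a clause gadget must be a non-backbone edge, so that $T$ remains acyclic. The bound $\Delta(G) \le 4$ is preserved because, as noted in the preamble to the theorem statement, every characteristic vertex and every connection vertex of the forcing gadgets has degree at most $3$ internally, leaving exactly one slot available for an external edge.

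Correctness then splits in the usual two directions. Given a satisfying assignment $\phi$, I would colour the characteristic vertex of each variable gadget according to $\phi(x_i)$, extend through each subgadget using \Cref{lem:n4-gadget,lem:f8-gadget,lem:f7-gadget,lem:f6-gadget} and their symmetric counterparts, and finally complete the clause gadgets using the fact that at least one literal is true. Given any valid $\lambda$-backbone $(\lambda + 4)$-colouring, the forcing lemmas let me read off a truth value per variable from the colour of its characteristic vertex chain, and the clause gadget excludes all-false clauses. I expect the main obstacle to be precisely the tree constraint on $T$: in contrast with the path-backbone reduction of \Cref{thm:path-lambda+3}, where a single Hamiltonian spine threads all gadgets, here any backbone cycle is forbidden, so all inter-gadget propagation of colour must ride on non-backbone edges while the internal backbone subtrees are glued together cycle-free. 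Combined with the tight $\Delta(G) \le 4$ budget at connection and characteristic vertices, this leaves very little slack and is the step demanding the most careful bookkeeping.
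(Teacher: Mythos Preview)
Your high-level shape (reduce from \texttt{3-SAT}, use the $N$/$F$ forcing gadgets, keep inter-gadget links non-backbone) matches the paper, but two points in your plan are genuine gaps rather than bookkeeping.

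First, you treat the forcing lemmas as unconditional, but \Cref{lem:n4-gadget,lem:f8-gadget,lem:f7-gadget,lem:f6-gadget} all carry hypotheses of the form ``$c(v)\in\{1,2,3,4\}$'' (or the mirrored set) on a designated connection vertex. Nothing intrinsic to the gadgets guarantees these hypotheses; they must be imposed from outside. The paper's mechanism is not a mere spine: it builds an auxiliary binary backbone tree rooted at $r$ whose leaves are precisely all connection vertices of all subgadgets, placed at even or odd distance from $r$ according to which side is required. In any $\lambda$-backbone $(\lambda+4)$-colouring the backbone edges then force alternation between $\{1,2,3,4\}$ and $\{\lambda+1,\lambda+2,\lambda+3,\lambda+4\}$ along every root-to-leaf path, so every forcing lemma's hypothesis is met simultaneously. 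Your ``small number of extra backbone edges along a spine'' does not address this parity control, and without it the gadgets do not force anything.

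Second, your variable encoding cannot work as written. You propose to let the characteristic vertex $u_3$ of an $F_{\lambda+4}$-gadget take either $1$ or $\lambda+4$ and read off $\phi(x_i)$ from that choice. But once the side hypotheses are enforced (as they must be, per the previous paragraph), \Cref{lem:f8-gadget} pins $c(u_3)=\lambda+4$ deterministically; there is no binary freedom left at $u_3$. The paper instead encodes a variable by attaching, via backbone edges, a cycle of fresh vertices $s^i_1,\ldots,s^i_{2m}$ to the characteristic vertices of $2m$ copies of the $F_{\lambda+2}$-gadget. Since each $z_2$ is forced to $\lambda+2$, every $s^i_j$ is forced into $\{1,2\}$, and the even non-backbone cycle on the $s^i_j$ then yields exactly two colourings (all even positions $1$ and odd positions $2$, or vice versa), which is the binary choice encoding $\phi(x_i)$. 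The clause gadget is likewise built around this $\{1,2\}$ encoding rather than a $\{1,\lambda+4\}$ one. So the place where the Boolean degree of freedom lives, and the gadget used to create it, are both different from what you sketched.
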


\begin{proof}
    We proceed by reduction from \texttt{3-SAT} problem: given a $3$-CNF instance with $n$ variables $x_1$, \ldots, $x_n$ and $m$ clauses $C_1$, $C_2$, \ldots, $C_m$ we ask whether there exists an assignment satisfying this formula.

\begin{figure}[H]
    \centering
    \begin{tikzpicture}[scale=1]
    \foreach \val [count=\i] in {-4, -2, 0, 6} {
        \ifthenelse{\i = 4}{\def\y{2m}}{\def\y{\i}};
        \node[draw, circle, fill=black, inner sep=2pt, label={[label distance=1mm]90:$s^i_{\y}$}] (v\i) at (\val, 0) {};
        \node[draw, circle, fill=black, inner sep=2pt, label={[label distance=1mm]0:$z_{2, \y}$}] (z\i) at (\val, -1) {};
        \draw (\val - 0.6, -1.8) rectangle (\val + 0.6, -3);
        \draw (v\i) -- (z\i) -- (\val, -1.8) [line width=3pt];
        \draw (\val - 0.3, -1.8) -- (z\i) -- (\val + 0.3, -1.8);
        \node at (\val, -2.4) {$F_{\lambda + 2}$};
    }
    \draw (v1) -- (v2) -- (v3);
    \draw[dotted] (v3) -- (v4);
    \draw (v1) to[out=5, in=175] (v4);
    \end{tikzpicture}
    \caption{Variable gadget for $x_i$.}
     \label{fig:variable-gadget}
\end{figure}
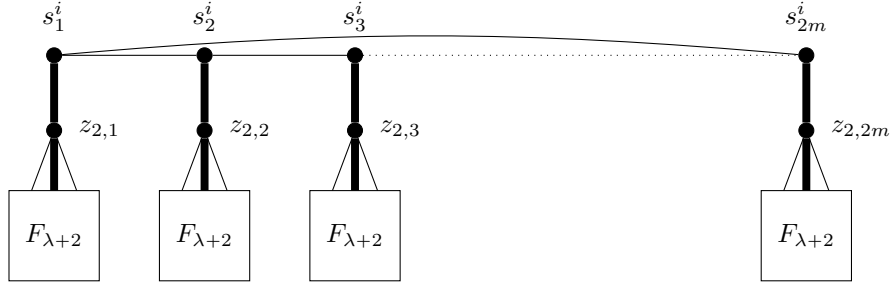

    For every variable $x_i$ we construct a variable gadget (see \Cref{fig:variable-gadget}) by creating $2m$ $F_{\lambda + 2}$-gadgets and a non-backbone cycle on $2m$ vertices $s^i_1$, \ldots, $s^i_{2m}$. Then, we connect $s^i_j$ to a respective characteristic vertex of its $F_{\lambda + 2}$-gadget with a backbone edge, thus each such vertex has a degree exactly equal to $3$.

\begin{figure}[H]
    \centering
    \begin{tikzpicture}[xscale=0.88, yscale=0.6]
    \foreach \val [count=\i from 1] in {-4, 0, 4} {
        \ifthenelse{\i = 3}{\def\x{}}{\ifthenelse{\i = 2}{\def\x{'}}{\def\x{''}}};
        \node[draw, circle, fill=black, inner sep=2pt, label={90:$u\x_{3,1}$}] (X1\i) at (-3,\val) {};
        \node[draw, circle, fill=black, inner sep=2pt, label={95:$y\x_1$}] (V1\i) at (-1.5,\val) {};
        \node[draw, circle, fill=black, inner sep=2pt, label={90:$y\x_2$}] (V2\i) at (0,\val) {};
        \node[draw, circle, fill=black, inner sep=2pt, label={90:$y\x_3$}] (V3\i) at (1.5,\val) {};
        \node[draw, circle, fill=black, inner sep=2pt, label={90:$y\x_4$}] (V4\i) at (3,\val - 1) {};
        \node[draw, circle, fill=black, inner sep=2pt, label={90:$w\x_2$}] (Y1\i) at (3,\val + 1) {};
        \node[draw, circle, fill=black, inner sep=2pt, label={90:$u\x_{3,2}$}] (Z1\i) at (4.5,\val - 1) {};

        \coordinate (X2\i) at (-4,\val + 0.2);
        \coordinate (X3\i) at (-4,\val - 0.2);
        \coordinate (Y2\i) at (4,\val + 1.4);
        \coordinate (Y3\i) at (4,\val + 0.6);
        \coordinate (Y4\i) at (4,\val + 1);
        \coordinate (Z2\i) at (5.5,\val - 0.8);
        \coordinate (Z3\i) at (5.5,\val - 1.2);
        \draw (-6, \val - 0.6) rectangle (-4, \val + 0.6);
        \node at (-5, \val) {$F_1$};
        \draw (4, \val + 0.4) rectangle (6, \val + 1.6);
        \node at (5, \val + 1) {$F_{\lambda + 3}$};
        \draw (5.5, \val - 1.6) rectangle (7.5, \val - 0.4);
        \node at (6.5, \val - 1) {$F_1$};

        \draw (X2\i) -- (X1\i) -- (X3\i) [line width=3pt];
        \draw (Y2\i) -- (Y1\i) -- (Y3\i) [line width=3pt];
        \draw (Z2\i) -- (Z1\i) -- (Z3\i) [line width=3pt];
        \draw (Y1\i) -- (Y4\i);
        
        \draw (X1\i) -- (V1\i);
        \draw (V2\i) -- (V3\i) -- (Y1\i);
        \draw (Z1\i) -- (V4\i);
        \draw (V1\i) -- (V2\i) [line width=3pt];
        \draw (V3\i) -- (V4\i) [line width=3pt];
    }
    \draw (V11) to[out=80, in=-80] (V12);        
    \draw (V12) to[out=80, in=-80] (V13);        
    \draw (V13) to[out=-75, in=75] (V11);
    \end{tikzpicture}
    \caption{Clause gadget.}
     \label{fig:clause-gadget}
\end{figure}
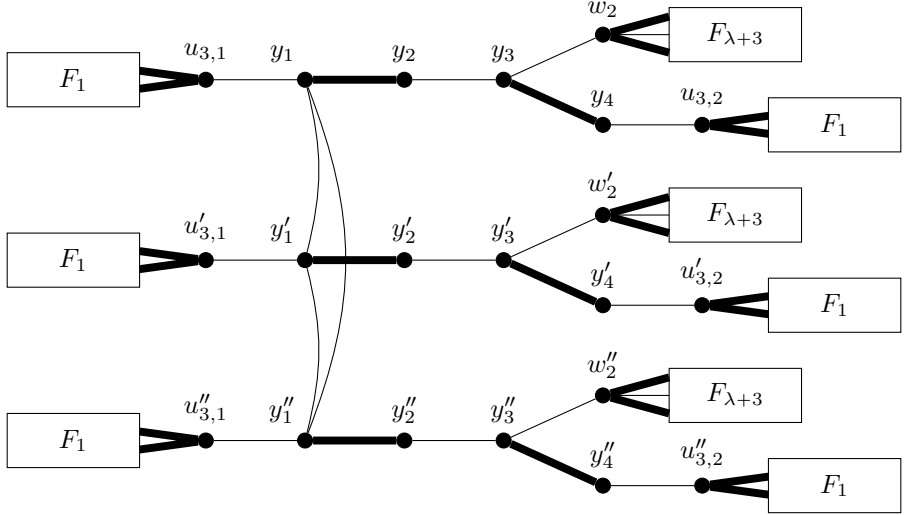

    For every clause we build a gadget presented in \Cref{fig:clause-gadget} and connect vertices $y_4$, $y'_4$, and $y''_4$ with a non-backbone edge to $s^i_{2j}$ (respectively, $s^i_{2j - 1}$) when it was the $j$-th appearance (in the whole formula) of a literal $x_i$ ($\lnot x_i$). Additionally, we define its connection vertices to be $y_2$, $y_3$, $y'_2$, $y'_3$, $y''_2$, and $y''_3$.

    Finally, to ensure that the backbone is a tree, we create an auxiliary binary backbone tree with root $r$ such that its leaves in an even distance from $r$ are exactly all the connection vertices from all $N_4$, $F_{\lambda + 2}$, $F_{\lambda + 3}$, and $F_{\lambda + 4}$-gadgets used in the construction, and its leaves in an odd distance from $r$ are exactly all the connection vertices from all the $F_1$, $N_{\lambda + 1}$, and clause gadgets used in the construction. This way, in any $\lambda$-backbone $(\lambda + 4)$-coloring $c$ of a tree $c(r) \in \{1, 2, 3, 4\}$ is equivalent to $c(v) \in \{\lambda + 1, \lambda + 2, \lambda + 3, \lambda + 4\}$ for the connection vertices $v$ in $N_{\lambda + 1}$, $F_1$ and clause gadgets, and $c(v) \in \{1, 2, 3, 4\}$ for all other connection vertices $v$.

    It is easy to verify that all vertices in variable gadgets, clause gadgets and internal vertices of the auxiliary backbone tree have degree at most $4$, since edges between clause and variable gadgets increase the respective degrees of $s^i_j$ and $y_4$ vertices by $1$.
    Similarly, in all other gadgets we only added at most one edge to connection vertices and characteristic vertices, but since they had degree at most $3$ and all other degrees had degree at most $4$, we conclude that our graph indeed has maximum degree at most $4$.

    Now suppose that there exists an assignment $\phi$ satisfying the formula. Then, we construct the coloring $c$ as following: if $\phi(x_i) = 1$, then we set $c(s^i_{2j}) = 1$, $c(s^i_{2j - 1}) = 2$, otherwise we set $c(s^i_{2j}) = 2$, $c(s^i_{2j - 1}) = 1$ for all $j = 1, \ldots, m$.
    Since all clauses are satisfied, there is at least one $s$ vertex adjacent to it with color $1$, and two others with colors from $\{1, 2\}$.
    Thus, one of the vertices $y_4$, $y'_4$, $y''_4$ can get color $2$ (without loss of generality we assume $c(y_4) = 2$), while the others get color $3$. This way, we can assign $c(y_3) = \lambda + 2$ and $c(y'_3) = c(y''_3) = \lambda + 4$, and furthermore $c(y_2) = \lambda + 4$, $c(y'_2) = \lambda + 3$, $c(y''_2) = \lambda + 2$, together with $c(y_1) = 4$, $c(y'_1) = 3$, and $c(y''_1) = 2$.

    We complete the coloring by assigning:
    \begin{itemize}
        \item in every $F_{\lambda + 2}$-gadget: $c(z_1) = 1$, $c(z_2) = \lambda + 2$,
        \item in every $F_{\lambda + 3}$-gadget: $c(w_1) = 2$, $c(w_2) = \lambda + 3$, $c(w_3) = 3$,
        \item in every $F_{\lambda + 4}$-gadget: $c(u_1) = c(u_4) = 1$, $c(u_2) = 4$, $c(u_3) = \lambda + 4$,
        \item in every $N_4$-gadget: $c(v_1) = 1$, $c(v_2) = 2$, $c(v_3) = 3$, $c(v_4) = 4$, $c(v_5) = \lambda + 2$, $c(v_6) = 1$, $c(v_7) = \lambda + 4$, $c(v_8) = \lambda + 3$, and
        \begin{itemize}
            \item either $c(v_9) = 2$ when $v_9$ appears also as $v'_9$ in some $F_{\lambda + 4}$-gadget (see \Cref{fig:n4-f8-gadget}),
            \item or $c(v_9) = 3$ in when $v_9$ appears also as $v_9$ in some $F_{\lambda + 4}$-gadget.
        \end{itemize}
    \end{itemize}
    The colors in $F_1$ and $N_{\lambda + 1}$-gadgets are exactly the reverse, i.e. we replace color $i$ from $F_{\lambda + 4}$ or $N_4$-gadget coloring with $\lambda + 5 - i$.
    All the edge conditions within gadgets are therefore satisfied and one can check that also the color difference on edges between $(a)$ $F_{\lambda + 4}$-gadget and its $N_4$-subgadgets, $(b)$ $F_{\lambda + 3}$-gadget and its $F_{\lambda + 4}$ and $F_1$-subgadgets, $(c)$ $F_{\lambda + 2}$-gadget and its $F_{\lambda + 3}$, $F_{\lambda + 4}$ and $F_1$-subgadgets are satisfied too.

    Finally, we color internal vertices of the auxiliary backbone tree with colors $1$ and $\lambda + 4$ so that $c(r) = 1$. This is clearly consistent with all connection vertices, since $c(v) \in \{\lambda + 1, \lambda + 2, \lambda + 3, \lambda + 4\}$ for the connection vertices $v$ in $N_{\lambda + 1}$, $F_1$ and clause gadgets, and $c(v) \in \{1, 2, 3, 4\}$ for all other connection vertices $v$. Thus, $c$ is a $\lambda$-backbone $(\lambda + 4)$-coloring of $G$ with backbone $T$.

    Now, let us prove the other way. Assume that there exists a $\lambda$-backbone $(\lambda + 4)$-coloring $c$ of such graph $G$ with backbone $T$. Without loss of generality assume that $c(r) \in \{1, 2, 3, 4\}$, since the other case is entirely symmetrical.
    This implies that $c(v) \in \{\lambda + 1, \lambda + 2, \lambda + 3, \lambda + 4\}$ for the connection vertices $v$ in $N_{\lambda + 1}$, $F_1$ and clause gadgets, and $c(v) \in \{1, 2, 3, 4\}$ for all connection vertices $v$ in $N_4$, $F_{\lambda + 4}$, $F_{\lambda + 3}$ and $F_{\lambda + 2}$-gadgets. This, in turn, by \Cref{lem:n4-gadget,lem:f8-gadget,lem:f7-gadget,lem:f6-gadget} implies that for every $F_{\lambda + 2}$-gadget its characteristic vertex $v$ adjacent to $s^i_j$ in any variable gadget we have $c(v) = \lambda + 2$, thus $c(s^i_j) \in \{1, 2\}$.
    Since for a fixed $i$ vertices $c(s^i_j)$ form an even cycle, it has to be the case that either $c(s^i_{2j}) = 1$ and $c(s^i_{2j - 1}) = 2$, or $c(s^i_{2j}) = 2$ and $c(s^i_{2j - 1}) = 1$ for all $i = 1, 2, \ldots, m$. Let us set $\phi(x_i) = 1$ in the first case, otherwise we set $\phi(x_i) = 0$.

    Finally, we know that for any clause gadget:
    \begin{description}
        \item[$(1)$] $c(y_2), c(y_3), c(y'_2), c(y'_3), c(y''_2), c(y''_3) \in \{\lambda + 1, \lambda + 2, \lambda + 3, \lambda + 4\}$, since they are the connection vertices in the gadget,
        \item[$(2)$] $c(y_1), c(y_4), c(y'_1), c(y'_4), c(y''_1), c(y''_4) \in \{1, 2, 3, 4\}$,
        \item[$(3)$] by \Cref{col:f8-gadget} we have $c(u_{3,1}) = c(u_{3,2}) = c(u'_{3,1}) = c(u'_{3,2}) = c(u''_{3,3}) = c(u''_{3,3}) = 1$,
        \item[$(4)$] by \Cref{lem:f7-gadget} we have $c(w_2) = c(w'_2) = c(w''_3) = \lambda + 3$.
    \end{description}
    Suppose now that $\phi$ is not a satisfying assignment and pick a clause which is not satisfied. Then, by its definition in the respective clause gadget $y_4$, $y'_4$, $y''_4$ are adjacent to respective $s$ vertices from the variable clauses colored with $2$ in $c$.
    By conditions $(2)$ and $(3)$ it follows that $c(y_4), c(y'_4), c(y''_4) \in \{3, 4\}$ so $c(y_3), c(y'_3), c(y''_3) \in \{\lambda + 3, \lambda + 4\}$ and by $(4)$ it has to be the case that $c(y_3) = c(y'_3) = c(y''_3) = \lambda + 4$.
    
    However, the condition $(3)$ together with the edge requirements for a triangle on $y_1$, $y'_1$, $y''_1$ guarantees that one from the latter vertices is colored with $4$ in $c$, thus one of vertices $y_2$, $y'_2$, $y''_2$ has to have color $\lambda + 4$ as well -- a contradiction. Therefore, $\phi$ has to satisfy the input formula.
\end{proof}

\section{Conclusion}

In this article we filled several gaps in the knowledge of the complexity status of backbone coloring for graphs with maximum degree at most $4$. However, there are still several problems open, most notably the status of $BBC_2(G, H)$ for galaxy, path, and tree backbones.
However, we conjecture that there might be a simple formula describing the complexity status of the $\lambda$-backbone coloring problem for fixed degree graphs:
\begin{conjecture}
    Let $\lambda \ge 2$. Let also $G$ be a graph with a maximum degree $\Delta(G) \le 4$. Then for any $l \ge \lambda + 2$ (when backbone $H$ is a matching or a galaxy) or $l \ge \lambda + 3$ (when backbone $H$ is a Hamiltonian path or a spanning tree) the problem $BBC_{\lambda}(G, H) \le l$ is either trivial (that is, the answer is \texttt{YES} for all graphs) or $\NP$-complete.
\end{conjecture}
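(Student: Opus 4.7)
The plan is to settle the dichotomy case by case along the rows and columns of Table~\ref{tab:summary}, since by the monotonicity of the problem (if $BBC_\lambda(G,H) \le l$ then also $BBC_\lambda(G,H) \le l+1$) and the implications between backbone classes (matching $\subseteq$ galaxy, path $\subseteq$ tree) it suffices to handle the finitely many ``frontier'' thresholds $l$ that are not already either trivially \texttt{YES} (via the upper bounds from \Cref{miskuf:d,miskuf:matching,thm:matching-lambda+3,thm:galaxy-lambda+4,thm:path-lambda+4,thm:miskuf-extension} and \Cref{thm:havet}) or \NP-complete (via \Cref{thm:matching-lambda+2,thm:galaxy-lambda+3,thm:path-lambda+3,thm:tree-lambda+4}). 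A quick audit of the table shows that the only remaining open cells are $BBC_2(G,H)\le 5$ for $H\in\{S,P,T\}$ and $BBC_\lambda(G,T)\le \lambda+5$ for $\lambda\ge 3$, plus the specific $BBC_3(G,T)\le 7$ entry; the conjecture says each of these must either be trivial or \NP-complete.

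For the $\lambda=2$, $l=5$ open cells I would first attempt to strengthen the $2 \lambda + 2$ bound of \Cref{thm:miskuf-extension} along the lines of \Cref{thm:galaxy-lambda+4}: iterate over the stars/components of the backbone, and for each one show via case analysis on the degree of the ``root'' in $G$ (exploiting $\Delta(G)\le 4$) that at most five colors suffice, possibly combined with a Brooks-style treatment of the exceptional graphs $K_5$ and odd cycles. If such a structural argument goes through, the corresponding entries become trivial. If not, I would attempt a reduction from \texttt{NAE-3-SAT} similar to \Cref{thm:matching-lambda+2}, replacing the ``fork'' gadget $X$ by a $\lambda=2$ analogue that forces a pair of colors $(1,5)$ on two distinguished vertices using the restricted palette.

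For the tree cases $BBC_\lambda(G,T)\le \lambda+5$, the plan is to adapt the gadget framework of \Cref{thm:tree-lambda+4}. The key observation there is that the gadgets $N_4$, $F_{\lambda+4}$, $F_{\lambda+3}$, $F_{\lambda+2}$ are designed to \emph{force} a specific color on a distinguished vertex, and they are chained through an auxiliary binary backbone tree. To handle $l=\lambda+5$ one additional color becomes available, so the gadgets no longer force colors by a simple ``no other option'' argument. The strategy is to enlarge the gadgets so that every characteristic vertex is adjacent to at least four already-committed vertices (e.g. via larger cliques or $K_{2,3}$-like structures with backbone edges), thereby eliminating the extra slack introduced by the new color. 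The variable gadget would again be an even non-backbone cycle of forced vertices, and the clause gadget a $K_3$ of \emph{trio} vertices each with the enlarged signalling construction; the reduction would again be from \texttt{3-SAT} or \texttt{NAE-$k$-SAT}.

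The main obstacle I expect is in the tree $\lambda+5$ case: enlarging the gadgets while keeping $\Delta(G) \le 4$ is very tight, since the characteristic vertices in \Cref{thm:tree-lambda+4} already use three of their four available edges to the backbone and one to the structural neighbour. Adding even one enforcement edge per characteristic vertex may require redistributing degrees through an auxiliary ``booster'' gadget analogous to the $N_4$/$F_{\lambda+4}$ hierarchy, which in turn needs its own correctness proof. Moreover, one must take care that the auxiliary backbone tree connecting all gadget subtrees remains acyclic and respects the degree bound. If these gadget constructions fail to exist, that would be evidence that the $\lambda+5$ case actually admits a polynomial upper bound (and the conjecture would then still hold, but on the trivial side), which would in turn suggest attempting an inductive structural proof analogous to the $(\lambda+4)$-coloring construction of \Cref{thm:galaxy-lambda+4}$(d)$.
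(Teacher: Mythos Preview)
The statement you are attempting to prove is listed in the paper as a \emph{conjecture}, not a theorem: the paper offers no proof of it and explicitly flags the missing cells in Table~\ref{tab:summary} as open problems. Consequently there is no ``paper's own proof'' to compare your proposal against.

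What you have written is not a proof but a research plan. You correctly identify the frontier cases that remain open (the $\lambda=2$, $l=5$ cells for $S,P,T$; the tree cases $BBC_\lambda(G,T)\le\lambda+5$ for $\lambda\ge 3$; and $BBC_3(G,T)\le 7$), and your high-level strategy---try to push the upper bound down, and if that fails, try to extend the gadget reductions---is the natural one. But the proposal contains no new argument: for the $\lambda=2$ side you merely suggest ``attempting'' a case analysis in the spirit of \Cref{thm:galaxy-lambda+4} without giving one, and for the tree $\lambda+5$ side you yourself flag the central obstruction (enlarging gadgets while respecting $\Delta(G)\le 4$) and leave it unresolved. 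A proof of the conjecture would require actually constructing the missing gadgets or the missing colorings; absent that, the dichotomy remains open exactly as the paper states.
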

Of course, one can also generalize the above conjecture for graphs with any fixed maximum degree.

This conjecture would imply that there is a uniform formula for $\Delta(G) = 4$ and all $\lambda \ge 2$ not only for matching, but also for galaxy and path backbones -- thus we expect both $BBC_2(G, S) \le 5$ and $BBC_2(G, P) \le 5$ to turn out to be $\NP$-complete.
This is definitely not true in the case of trees, since $BBC_\lambda(G, T) \le \lambda + 4$ is trivial for $\lambda = 2$ and $\NP$-complete for $\lambda \ge 4$. However, there remains an issue whether the upper bounds presented in \Cref{tab:summary} are the best possible -- and we believe that, unlike for the planar graphs as shown in \cite{havet2014circular}, this is not the case here:
\begin{conjecture}
    There is no graph $G$ with maximum degree $4$ and no backbone tree $T$ such that either $BBC_3(G, T) = 8$ or $BBC_\lambda(G, T) = \lambda + 6$ for some $\lambda \ge 4$.
\end{conjecture}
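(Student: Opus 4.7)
The conjecture contains two claims: $BBC_3(G, T) \le 7$ for every graph $G$ with $\Delta(G) \le 4$ and every spanning tree $T$, and $BBC_\lambda(G, T) \le \lambda + 5$ for every $\lambda \ge 4$ under the same hypotheses. Both claims improve the best current bound by exactly one. The natural starting point for each is the bipartition $(A, B)$ of $V(G)$ induced by $T$, so that every backbone edge runs between $A$ and $B$.

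For $\lambda \ge 4$, I would attempt the direct construction that colors $A$ with $\{1, 2, 3\}$ and $B$ with $\{\lambda + 3, \lambda + 4, \lambda + 5\}$. This yields a valid $\lambda$-backbone coloring provided $G[A]$ and $G[B]$ are both $3$-chromatic: backbone edges have difference at least $\lambda$, non-backbone edges inside each part are handled by the proper coloring, and non-backbone edges across the parts are automatically different since the palettes are disjoint. By \Cref{Observation:Brooks}, the only obstruction to a $3$-coloring of $G[A]$ or $G[B]$ is a $K_4$ subgraph (the other Brooks' exceptions, $K_5$ and odd cycles, either do not fit or are $3$-chromatic). So the bulk of the argument is a case analysis for configurations containing $K_4 \subseteq G[A]$ (or $G[B]$).

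In such a configuration, each of the four $K_4$ vertices already spends three of its four allowed edges inside the $K_4$, so its unique remaining edge must be the tree edge to some $w_i \in B$. I would color the $K_4$ with $\{1, 2, 3, 4\}$ and encode the induced backbone constraints as a list-coloring instance on $G[B]$: the tree neighbor $w_i$ of the $K_4$-vertex colored $j$ receives the list $\{c \in \{\lambda + 3, \lambda + 4, \lambda + 5\} : |c - j| \ge \lambda\}$, while all other $B$-vertices keep the full list $\{\lambda + 3, \lambda + 4, \lambda + 5\}$. One must then show that this list instance is always feasible, e.g.\ by invoking list-chromatic bounds for graphs of maximum degree at most $4$ or by a greedy argument exploiting the fact that each $w_i$ still has at least two available colors and is subject to only one restricting neighbour. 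The case $\lambda = 3$ is analogous, but since no pair of disjoint $3$-element palettes with gap at least $3$ fits into $\{1, \dots, 7\}$, one must instead use overlapping palettes such as $\{1, 2, 3\}$ on $A$ and $\{5, 6, 7\}$ on $B$, with auxiliary use of colour $4$ in either part, which requires a noticeably tighter combinatorial analysis.

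The main obstacle will likely be configurations in which multiple $K_4$ subgraphs in $G[A]$ and $G[B]$ are coupled through long tree paths or shared tree neighbours, so that the local list-coloring constraints propagate and cascade across the whole graph. A purely local extension argument probably does not suffice, and a successful proof may need either a discharging-style global argument or the identification of a constant-size forbidden substructure in a minimum counterexample. The fact that the assertion remains a conjecture strongly suggests that this propagation step is precisely where the real difficulty lies.
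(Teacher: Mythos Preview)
This statement is a \emph{conjecture} in the paper; there is no proof to compare against. The only thing the paper offers is the remark immediately following the conjecture: when $G \setminus T$ is $K_4$-free, one takes a proper $3$-coloring $c'$ of $G \setminus T$ (valid by Brooks, since every vertex has at least one tree edge and hence $\Delta(G \setminus T) \le 3$), keeps $c'$ on the tree-bipartition class $A$, and shifts by $\lambda + 2$ on the other class $B$. You correctly recognise that the full statement is open and do not overclaim; your write-up is an outline of an attack, not a proof, and that is the honest status of the problem.

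Your approach and the paper's remark are close cousins but not identical. The paper $3$-colours all of $G \setminus T$ at once and then shifts; you instead $3$-colour $G[A]$ and $G[B]$ separately with disjoint palettes. Since every edge inside $A$ or inside $B$ is automatically a non-tree edge, any $K_4$ in $G[A]$ or $G[B]$ is also a $K_4$ in $G \setminus T$, so your ``easy case'' strictly contains the paper's: a $K_4$ of $G \setminus T$ whose four (necessarily leaf) vertices are split between $A$ and $B$ blocks the paper's argument but not yours. That is a small but genuine gain.

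Where you go beyond the paper is the list-colouring step for handling a $K_4 \subseteq G[A]$ by spending colour $4$ on one of its vertices and pushing the resulting constraint onto the unique tree neighbour in $B$. The observation that each $K_4$-vertex has only its tree edge left is correct and useful. However, the sentence ``each $w_i$ \ldots\ is subject to only one restricting neighbour'' is not quite right: $w_i$ can itself lie in a $K_4 \subseteq G[B]$, or be the tree neighbour of several $K_4$-vertices from different copies, and more importantly the choice of \emph{which} vertex of the $A$-side $K_4$ receives colour $4$ interacts with the analogous choices on the $B$-side. You do flag this propagation as the crux, and that is exactly where the paper stops as well; neither you nor the authors currently know how to close this gap.
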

Let us only note in passing that the latter claim is true for $G$ and $T$ such that $G \setminus T$ is $K_4$-free: from \Cref{Observation:Brooks} we know that $\chi(G \setminus T)$ is $3$-colorable and since there also exists a bipartition $A$, $B$ of $V(T)$, we can construct a valid $\lambda$-backbone coloring with $\{1, 2, 3, \lambda + 3, \lambda + 4, \lambda + 5\}$ in such a way that we simply preserve all colors from $3$-coloring of $G$ for all $v \in A$ and we shift them by $\lambda + 2$ for all $v \in B$ -- this way satisfying also all the backbone edge conditions.

\bibliography{degree-4.bib}

\end{document}